\def\qed{\strut\hfill $\Box$}
\newtheorem{thm}{Theorem}[section]
\newtheorem{prop}[thm]{Proposition}
\newtheorem{lem}[thm]{Lemma}
\newtheorem{rem}[thm]{Remark}
\newtheorem{defn}[thm]{Definition}
\newcommand{\thmref}[1]{Theorem~{\rm \ref{#1}}}
\newcommand{\lemref}[1]{Lemma~{\rm \ref{#1}}}
\newcommand{\propref}[1]{Proposition~{\rm \ref{#1}}}
\def\para#1{\vskip .4\baselineskip\noindent{\bf #1}}
\numberwithin{equation}{section}
\begin{document}
	\begin{frontmatter}

		\title{Large deviation principle for slow-fast system with mixed fractional Brownian motion}
		
		\author[mymainaddress]{Yuzuru INAHAMA\footnote[1]{The author's names are in alphabetical order, following a traditional custom of the mathematical community. The author Yuzuru INAHAMA and Xiaoyu YANG contributed to the work equally.}}
		\ead{inahama@math.kyushu-u.ac.jp}

		\author[mysecondaddress,mythirdaddress]{Yong XU\corref{mycorrespondingauthor}}
		\cortext[mycorrespondingauthor]{Corresponding author}
		\ead{hsux3@nwpu.edu.cn}
		
		\author[mysecondaddress]{Xiaoyu YANG}
		\ead{yangxiaoyu@yahoo.com}

		\address[mymainaddress]{Faculty of Mathematics,  Kyushu University, Fukuoka, 8190395, Japan}
		\address[mysecondaddress]{School of Mathematics and Statistics, Northwestern Polytechnical University, Xi'an, 710072, China}
		
		\address[mythirdaddress]{MOE Key Laboratory of Complexity Science in Aerospace, Northwestern Polytechnical University, Xi’an, 710072, China}
		
		\begin{abstract}
			This work focuses on a  slow-fast system perturbed by mixed fractional Brownian motion with  Hurst parameter $H\in(1/2,1)$. The integral with respect to fractional Brownian motion is the generalized Riemann-Stieltjes integral and the integral with respect to Brownian motion is the standard It\^o integral. Our approach is based on the variational framework and the  weak convergence criteria  for mixed fractional Brownian motion. By combining the weak convergence method and Khasminskii's averaging principle, we show a large deviation principle for the slow component.
 %
            \vskip 0.08in
			\noindent{\bf Keywords.}
			Slow-fast  system, Large deviation principle,  Fractional Brownian motion, Weak convergence method.
			\vskip 0.08in			
			
			\noindent{\bf AMS Math Classification.}
		{60F10, 60G15, 60H10}.
			\vskip 0.08in			\end{abstract}		
	\end{frontmatter}

	\section{Introduction}\label{sec-1}
	In this paper, we consider the following 
	 slow-fast system 
	of stochastic differential equations (SDEs) on the time interval $[0,T]$
driven by Brownian motion (Bm) and  fractional Brownian motion (fBm) in $\mathbb{R}^{m}\times \mathbb{R}^{n}$:
	\begin{eqnarray}\label{1}
	\left
	\{
	\begin{array}{ll}
	dx^{(\varepsilon, \delta)}_t = f_{1}(x^{(\varepsilon, \delta)}_t, y^{(\varepsilon, \delta)}_t)dt + \sqrt \varepsilon  \sigma_{1}( x^{(\varepsilon, \delta)}_t)dB^H_{t},\\
	\delta dy^{(\varepsilon, \delta)}_t = f_{2}( x^{(\varepsilon, \delta)}_t, y^{(\varepsilon, \delta)}_t)dt + {\sqrt \delta}\sigma_2( x^{(\varepsilon, \delta)}_t, y^{(\varepsilon, \delta)}_t)dW_{t}
	\end{array}
	\right.
	\end{eqnarray}
with the (non-random) initial condition 
$(x^{(\varepsilon, \delta)}_0, y^{(\varepsilon, \delta)}_0)=(x_0, y_0)\in \mathbb{R}^{m}\times \mathbb{R}^{n}$.
Here,  the time interval is $[0,T]$, $T>0$, and
$f_1:  \mathbb{R}^{m}\times \mathbb{R}^{n} \rightarrow  \mathbb{R}^{m}$, $f_2:  \mathbb{R}^{m}\times \mathbb{R}^{n} \rightarrow  \mathbb{R}^{n}$, 	 $\sigma_{1}: \mathbb{R}^{m} \rightarrow \mathbb{R}^{m\times d_1}$ and 
$\sigma_{2}: \mathbb{R}^{m}\times \mathbb{R}^{n} \rightarrow \mathbb{R}^{n\times d_2}$ are suitable nonlinear functions. 
Precise conditions on these coefficient functions will be specified later.
	 The driving processes 
	 $B^H =(B^H_t)_{t \in [0,T]}$ and $W =(W_t)_{t \in [0,T]}$ are  $d_1$-dimensional fBm 
	 with Hurst parameter $H \in (1/2, 1)$  
	  and  $d_2$-dimensional standard Bm, respectively,
	      which are assumed to be independent.
	  	   The two small parameters $\varepsilon$ and $\delta$, satisfying $0< \delta \ll \varepsilon \ll 1$, are used to describe the separation of time scale between the slow variables $x^{(\varepsilon, \delta)}$ and the fast variables $y^{(\varepsilon, \delta)}$.	Under suitable conditions, the slow-fast system (\ref{1}) admits a unique (pathwise) solution $(x^{(\varepsilon, \delta)},y^{(\varepsilon,\delta)})\in C^{\alpha}\left([0,T], \mathbb{R}^m\right) \times C\left([0,T], \mathbb{R}^n\right)$ with $\alpha\in (1-H,1/2)$, which will be precisely stated in Section 3.
Here, $C^{\alpha}\left([0,T], \mathbb{R}^m\right)$ and
 $C\left([0,T], \mathbb{R}^n\right)$ are the $\alpha$-H\"older continuous 
 path space and the continuous path space, respectively.  	   
	  	   

  	   	  	   It is not too difficult to see that under some conditions, the effective dynamics of the slow system in \eqref{1} can be described by   the  averaged system efficiently as $\delta \to 0$, which comes from  averaging principle \cite{2023Pei}. Precisely speaking, with fixed slow component, under appropriate conditions, the fast component has an invariant probability measure, it shows that as {$\delta \to 0$}, the slow component converges to an  averaged component defined as follows,  
  	   	  	   \begin{equation}\label{4}
  	   	  	   \left\{\begin{array}{l}
  	   	  	   d \bar{x}_t=\bar{f}_1\left(\bar{x}_t\right) d t \\
  	   	  	   \bar{x}_0=x_0 \in \mathbb{R}^m,
  	   	  	   \end{array}\right.
  	   	  	   \end{equation}
  	   	  	  {where $\bar{f}_1(x)=\int_{\mathbb{R}^{n}} f_1(x, y) d \mu_x(y)$ and $\mu_x$ }is a unique invariant probability measure of the fast component {with the slow variable 
			    being ``frozen" at a deterministic $x \in \mathbb{R}^m$}. 
			   This averaging
  	   	  	   principle provides an efficient approach to reduce  computational complexity \cite{2015Xu,2023Pei,2022Wang}. It can be viewed as a variant of the law of large numbers.

  	   	   Compared with averaging principle, a large deviation principle could capture the dynamical behavior more precisely, in other words, characterizing the exponential decay rate of probabilities of rare events \cite{Dembo2009Large}. The main purpose of  this work is to prove a large deviation principle for the slow component $x^{(\varepsilon,\delta)}$ of the above system \eqref{1}.  
The family
	  	   ${x^{(\varepsilon,\delta)}}$ of $C^{\alpha}\left([0,T], \mathbb{R}^m\right)$-valued random elements is said to satisfy a large deviation principle on $C^{\alpha}\left([0,T], \mathbb{R}^m\right)$ with a
	  	   rate function $I: C^{\alpha}\left([0,T], \mathbb{R}^m\right)\rightarrow [0, \infty]$ ($1-H <\alpha <1/2$) if the following two conditions hold:
	  	   \begin{itemize}
	  	   	\item For each closed subset $F$ of $C^{\alpha}\left([0,T], \mathbb{R}^m\right)$,
	  	   	$$\limsup _{\varepsilon \rightarrow 0} \varepsilon \log \mathbb{P}\big(x^{(\varepsilon,\delta)} \in F\big) \leqslant-\inf _{x \in F} I(x).$$
	  	   	\item For each open subset $G$ of $C^{\alpha}\left([0,T], \mathbb{R}^m\right)$,
	  	   	$$\liminf _{\varepsilon \rightarrow 0} \varepsilon \log \mathbb{P}\big(x^{(\varepsilon,\delta)} \in G\big) \geqslant-\inf _{x \in G} I(x).$$
	  	   \end{itemize}
		  {
This will be shown in our main theorem (Theorem \ref{thm}).  
A concrete definition of $I$, which is actually a good rate function, will also be given there. 	   
	  	}   
Now, the large deviation principle has become  an important topic in the probability field \cite{2010Asymptotic}.  Moreover, large deviations have been widely applied in many fields, such as statistics, information theory, engineering, and so on \cite{2009The}. The history of the large deviation principle for the  above system is long, which can be traced back to the result on the large deviation for stochastic systems  by Freidlin and Wentzell \cite{1984Random}. The  slow-fast system  \eqref{1} including multi-time scales can be widely-used in diverse areas, for instance,  climate-weather interactions models (see Kiefer \cite{2000Kiefer})  within  climate being the slow motion and weather the fast one;   the level of the asset price in financial economics \cite{2001BNS},    laser model \cite{2009Chaotic}, compartmental model \cite{Krupa2008Mixed} and so on. 
 Up to now, there have existed some kinds of  methods studying  large deviations for the stochastic slow-fast system.
The connection between exponential functionals and variational representations
 	has been exploited by Dupuis and Ellis \cite{1997A}. After that, the weak convergence method, based on the variational representation for the nonnegative functional of Bm, are constructed by Bou\'e and Dupuis \cite{1998Dupuis}.
	Then, the weak convergence method has been used intensively \cite{DS,2011Variational,2013Large,2020Large}. After that, the PDE theory \cite{BCS},  nonlinear semigroups and viscosity solution theory has been proposed in \cite{KP,FK, FFK}.

However, the aforementioned references all focused on the slow-fast system with diffusion. Up to now, there just  few work concentrating on the large deviation principle for slow-fast system with mixed fBm. In this work, we aim to address this issue. Different from standard Bm,   fBm is self-similar and possesses long-range dependence, which has been employed  to characterize randomness in some complex systems \cite{2008Biagini}. The Hurst parameter $H$ associated with  fBm describes the raggedness of the resultant motion, with a higher value leading to a smoother motion \cite{2008Mishura}. It is worth mentioning that along the variational representation for random functionals on abstract Wiener spaces \cite{2009Zhang}, the weak convergence method for  fBm was proposed by Budhiraja and Song \cite{2020Budhiraja}. After that, Gailus and Gasteratos established the large deviation for the slow-fast system with fBm by the homogenization theory and viable pair \cite{2022Gailus}. However, it is a non-trivial task to verify the corresponding conditions when applying the homogenization theory.

In our work, the problem is solved by the weak convergence method and   Khasminskii type averaging principle  efficiently.   
Firstly, based on the Bou\'e-Dupuis' variational representation formula for a standard Bm  \cite{BP_book},   the variational representation formula for  mixed fBm is given. Based on this, the study is turned into basic qualitative properties for the controlled system. Particularly,   the Khasminskii type averaging principle plays a key role in the weak convergence for the controlled slow-fast system.    Thanks to the particular regime that $\delta=o(\varepsilon)$, in the limit there is no control in the fast component. Then,  the weak convergence of the controlled slow component is obtained by the    time discretization techniques, the exponential ergodicity of the auxiliary fast component without  controlled term and so on.

	The structure of this paper is as follows. In Section 2, we introduce the notation and some preliminaries. In Section 3, we give assumptions and a precise  statement of our main result (Theorem \ref{thm}). 
	 In Section 4, some preliminary lemmas are proved. The proof of our main result is given in Section 5. 

	Throughout this paper, we set 
	{$\mathbb{N}=\{1,2, \ldots\}$}
	and denote by
	$c$, $C$, $c_1$, $C_1$, etc. certain positive constants that may change from line to line. {The time horizon $T >0$ and the non-random
	 initial value 
	$(x_0,y_0)$ are arbitrary but fixed throughout this paper. We do not keep track of 
	$T$ and $(x_0,y_0)$.}
	
	\section{Preliminaries and Assumptions}\label{sec-2}
	\subsection{Fractional integrals and derivatives}\label{sec-2-1}
	Firstly, recall some basic knowledge of generalized Riemann-Stieltjes integral. Let $a,b\in \mathbb{R}$ and $a<b$, denote by $L^p([a,b]), p\ge1$ the usual space of Lebesgue measurable functions $f:[a,b]\to \mathbb{R}$ for which $\|f\|_{L^p} <\infty$. Let $f\in L^1(a,b)$ and $\alpha>0$. The fractional left-sided and right-sided Riemann-Liouville integral of $f$ with order $\alpha$ are defined by
	$$\begin{gathered}
	I_{a+}^\alpha f(t)=\frac{1}{\Gamma(\alpha)} \int_a^t \frac{f(s)}{(t-s)^{1-\alpha}} d s, \\
	I_{b-}^\alpha f(t)=\frac{(-1)^{-\alpha}}{\Gamma(\alpha)} \int_t^b \frac{f(s)}{(s-t)^{1-\alpha}} d s,
	\end{gathered}$$
	for all $t\in(a,b)$, where $(-1)^\alpha=e^{-i \pi \alpha}$ and $\Gamma(\alpha)=\int_0^{\infty} r^{\alpha-1} e^{-r} d r$ is the Euler Gamma function. Let $I_{a+}^\alpha\left(L^p\right)$ (resp. $I_{b-}^\alpha(L^p)$) be the image of $L^p$ by the operator $I_{a+}^\alpha$ (resp. $I_{b-}^\alpha$). If $f\in I_{a+}^\alpha\left(L^p\right)$ (resp. $I_{b-}^\alpha(L^p)$) and $0<\alpha<1$, then the Weyl derivatives of $f$ are defined as follows:
	$$
	D_{a+}^\alpha f(t):=\frac{1}{\Gamma(1-\alpha)}\left(\frac{f(t)}{(t-a)^\alpha}+\alpha \int_a^t \frac{f(t)-f(s)}{(t-s)^{\alpha+1}} d s\right) \mathbf{1}_{(a, b)}(t)
	$$
	and 
	$$
	D_{b-}^\alpha f(t):=\frac{(-1)^\alpha}{\Gamma(1-\alpha)}\left(\frac{f(t)}{(b-t)^\alpha}+\alpha \int_t^b \frac{f(t)-f(s)}{(s-t)^{\alpha+1}} d s\right) \mathbf{1}_{(a, b)}(t)
	$$
	 for almost $t\in(a,b)$ (the convergence of the integrals at the
	singularity $s = t$ holds pointwise for almost all $t \in(a, b) $ if $p = 1$ and moreover in
	$L^p$-sense if $1 < p < \infty$).
	The following two facts holds:
	{\rm (i)} If $\alpha<1/p$ and $q=\frac{p}{1-\alpha p}$, then
	$$
	I_{a+}^\alpha\left(L^p\right)=I_{b-}^\alpha\left(L^p\right) \subset L^q(a, b).
	$$ 
	{\rm (ii)} If $\alpha>1/p$, then
	$$
	I_{a+}^\alpha\left(L^p\right) \cup I_{b-}^\alpha\left(L^p\right) \subset C^{\alpha-\frac{1}{p}}(a, b) .
	$$
	The fractional integrals and derivatives are related by the inversion formulas
	$$
	\begin{array}{ll}
	I_{a+}^\alpha\left(D_{a+}^\alpha f\right)=f, &\quad  f \in I_{a+}^\alpha\left(L^p\right), \\
	D_{a+}^\alpha\left(I_{a+}^\alpha f\right)=f, &\quad  f \in L^1(a, b)
	\end{array}
	$$
	and similar ones for $I_{b-}^\alpha$ and $D_{b-}^\alpha$.
	Denote that $f(a+):=\lim _{\varepsilon \searrow 0} f(a+\varepsilon)$ and $g(b-):=\lim _{\varepsilon \backslash 0} g(b-\varepsilon)$. We define
	\begin{equation*}
	\begin{aligned}
	&f_{a+}(x):=(f(x)-f(a+)) \mathbf{1}_{(a, b)}(x), \\
	&g_{b-}(x):=(g(x)-g(b-)) \mathbf{1}_{(a, b)}(x) .
	\end{aligned}
	\end{equation*}
	We now recall the definition of generalized Riemann-Stieltjes fractional
	integral with respect to irregular functions.

\begin{defn}
Let $f$ and $g$ be functions such that the limits
$f (a+)$, $g(a+)$, $g(b-)$ exist.
Suppose that $f_{a+} \in I_{a+}^\alpha\left(L^p\right)$ and
	$g_{b-} \in  I_{b-}^\alpha\left(L^p\right)$ for some $\alpha \in (0, 1)$ and $p, q \in [1,\infty) $ such that $1/p + 1/q \le 1$. 
In this
		case the generalized Riemann-Stieltjes integral
		\begin{eqnarray*}
		\int_a^b f d g=(-1)^\alpha \int_a^b D_{a+}^\alpha f_{a+}(x) D_{b-}^{1-\alpha} g_{b-}(x) d x+f(a+)(g(b-)-g(a+)),
		\end{eqnarray*}
		is well-defined.
	\end{defn}
{
In the above explanation, $f$ and $g$ are scalar-valued just for simplicity 
of notation. Obviously, analogous results holds in the multi-dimensional 
setting.
}

	We now introduce some necessary spaces and norm. 
{In what follows we work on $[0,T]$.  
For the rest of this subsection $k, l \in \mathbb{N}$ and $0<\alpha <1/2$.}
Denote by $W_0^{\alpha,\infty} = W_0^{\alpha,\infty} ([0,T], \mathbb{R}^{k})$ 
the space of measurable functions $f:[0,T]\to \mathbb{R}^{k}$ such that
	$$\|f\|_{\alpha, \infty}:=\sup _{t \in[0, T]}\|f\|_{\alpha,[0,t]}<\infty,	$$
	where we set
	$$\|f\|_{\alpha,[0,t]}=|f(t)|+\int_0^t \frac{|f(t)-f(s)|}{(t-s)^{\alpha+1}} d s.$$
	Then, introduce H\"older continuous path space. For $\eta \in(0,1]$,  let $C^\eta = C^\eta  ([0,T], \mathbb{R}^{k})$ be the space of $\eta$-H\"older continuous functions $f:[0,T]\to \mathbb{R}^{k}$, equipped with the norm
	$$
	\|f\|_{\eta\textrm{-hld}}:=\|f\|_{\infty}+\sup _{0 \leq s<t \le T} \frac{|f(t)-f(s)|}{(t-s)^\eta}<\infty,
	$$
	with $\|f\|_{\infty}=\sup _{t \in[0, T]}|f(t)|$.
	   For any $\kappa \in (0,\alpha)$, the continuous inclusion $C^{\alpha+\kappa} \subset W_0^{\alpha, \infty} \subset C^{\alpha-\kappa}$ holds.
	
Denote by $W_0^{\alpha, 1} = W_0^{\alpha, 1} ([0,T], \mathbb{R}^{k})$ 
the space of measurable functions $f:[0,T]\to \mathbb{R}^{k}$ such that
	$$
	\|f\|_{\alpha, 1}:=\int_0^T \frac{|f(s)|}{s^\alpha} d s+\int_0^T \int_0^s \frac{|f(s)-f(y)|}{(s-y)^{\alpha+1}} d y d s<\infty
	$$
	and by $W_T^{1-\alpha, \infty} =W_T^{1-\alpha, \infty} ([0,T], \mathbb{R}^{k})$ the space of measurable functions $g:[0,T]\to \mathbb{R}^{k}$ such that
	$$
	\|g\|_{1-\alpha, \infty, T}:=\sup _{0\le s<t\le T}\left(\frac{|g(t)-g(s)|}{(t-s)^{1-\alpha}}+\int_s^t \frac{|g(y)-g(s)|}{(y-s)^{2-\alpha}} d y\right)<\infty.
	$$
	It is also easy to verify that for any $\kappa \in (0,\alpha)$, $C^{1-\alpha+\kappa} \subset W_T^{1-\alpha, \infty} \subset C^{1-\alpha-\kappa}$.
	For $g \in W_T^{1-\alpha, \infty}$, we have
	$$
	\begin{aligned}
	\Lambda_\alpha(g) :=\frac{1}{\Gamma(1-\alpha)} \sup _{0<s<t<T}\mid\left(D_{t-\alpha}^{1-\alpha} g_{t-}\right)(s) \mid \leq \frac{1}{\Gamma(1-\alpha) \Gamma(\alpha)}\|g\|_{1-\alpha, \infty, T}<\infty.
	\end{aligned}
	$$
	Moreover, if $f \in W_0^{\alpha, 1}([0,T], \mathbb{R}^{l\times k})$ 
	 and $g \in W_T^{1-\alpha, \infty}([0,T], \mathbb{R}^{k})$, the integral $\int_0^t f dg \in \mathbb{R}^{l}$ is well-defined and the estimate
	$$
	\left|\int_0^t f d g\right| \leq \Lambda_\alpha(g)\|f\|_{\alpha, 1}
	$$
	holds for all $t \in [0,T]$.


		\subsection{Fractional Brownian motion}\label{sec-2-2}
	{
		We introduce a mixed fractional Brownian motion
		of Hurst parameter $H$ and recall some basic facts on it for later use.
		   In this subsection, $H \in (0,1)$ unless otherwise stated. 
		In what follows, 
		   $d_1, d_2 \in \mathbb{N}$ and  $d:=d_1+ d_2$.
		   		}
		
		Consider an $d_1$-dimensional fractional Brownian motion
		(fBm for short) 
		\[
		(B^H_t)_{t\in [0,T]}=(B_t^{H,1},B_t^{H,2},\cdots,B_t^{H,d_1})_{t\in [0,T]}
		\]
		 with Hurst parameter $H\in (0,1)$,
		where $(B_t^{H,i})_{t\in [0,T]}$, $1 \le i\le d_1$,
		are independent one-dimensional fBm's.
		This is a centered Gaussian process, characterized by  the covariance formula
		\[
		\mathbb{E}\big[B_{t}^{H,i} B_{s}^{H,j}\big]
		   =\frac{\delta_{ij}}{2}\left[t^{2 H}+s^{2 H}-|t-s|^{2 H}\right], 
		    \qquad  s, t \in [0,T], \, 1 \le i, j\le d_1.
		\]
		Here, $\delta_{ij}$ stands for Kronecker's delta.
				The increment satisfies that 
		\[
		\mathbb{E}\big[(B_{t}^{H,i}-B_{s}^{H,i})(B_{t}^{H,j}-B_{s}^{H,j})\big]=\delta_{ij}|t-s|^{2 H}, 
		\qquad   s, t \in [0,T], \, 1 \le i, j\le d_1.
		\]
		When $H=1/2$, it is a standard $d_1$-dimensional Brownian motion
		(Bm for short).

A typical construction for fBm is as follows (see e.g. \cite{1999Decreusefond}).
For a standard $d_1$-dimensional Brownian motion 
$B=(B_t)_{t\in [0,T]}$, define $B^H=(B_t^H)_{t\in [0,T]}$ by
		\begin{equation}\label{vol-rep}
		B_t^H :=\int_0^T K_H(t, s) d B_s, \qquad t \in[0,T].
		\end{equation}
		Here, we set for all $0\le s \le t \le T$
				$$
		K_H(t, s) :=k_H(t, s) 1_{[0, t]}(s) 
				$$
	with
		$$
		k_H(t, s) :=\frac{c_H}{\Gamma\left(H+\frac{1}{2}\right)}(t-s)^{H-\frac{1}{2}} F\left(H-\frac{1}{2}, \frac{1}{2}-H, H+\frac{1}{2} ; 1-\frac{t}{s}\right),
		$$
		where
		$c_H=\left[\frac{2 H \Gamma\left(\frac{3}{2}-H\right) \Gamma\left(H+\frac{1}{2}\right)}{\Gamma(2-2 H)}\right]^{1 / 2}$,  $\Gamma$ is a gamma function and $F$ is the Gauss hypergeometric function. 
		(For more details, see \cite{2020Budhiraja}.)
		It is known that the map $B \mapsto B^H$ is (essentially) 
		 a measurable isomorphism from the classical Wiener space to
		 the abstract Wiener space of fBm.
		   In other words, the two Gaussian structures can be 
		   identified through this map.
		     Moreover, the augmented natural filtration
		     of  $B$ and that of $B^H$ coincide.

By the above increment formula and 
Kolmogorov's continuity criterion, 
 $B^H$ has  almost surely $H'$-H\"older continuous trajectories for every $H'\in(0,H)$. Moreover,  we have the following remark when $H >1/2$.
	
	\begin{rem}
		Let $1/2<H<1$ and $1- H < \alpha < 1/2 $. Then, the trajectories of fBm $(B^H_t)_{t\in [0,T]}$
				belong to the space $W_T^{1-\alpha, \infty} ([0,T], \mathbb{R}^{d_1})$. 
		Therefore, the generalized Riemann-Stieltjes
		integrals $\int_0^t v_s d B_s^H$ exists if ${(v_t)_{t \in [0, T ]}}$ is a stochastic process whose trajectories belong to the space 
		$W_0^{\alpha, 1}([0,T], \mathbb{R}^{m\times d_1})$. 
		  Moreover, it holds that
		$$
		\left|\int_0^t v_s d B_s^H\right| \leq \Lambda_\alpha\left(B^H\right)\|v\|_{\alpha, 1}.
		$$
		Here,  $\Lambda_\alpha\left(B^H\right):=\frac{1}{\Gamma(1-\alpha) \Gamma(\alpha)}\left\|B^H\right\|_{1-\alpha, \infty, T}$, which has moments of all order \cite[Lemma 7.5]{2002Rascanu}.
	\end{rem}

For $f\in L^2([0,T],\mathbb{R}^{d_1})$, we define
		\[
		\mathcal{K_H}f(t)=\int_0^T K_H(t, s)f(s) d s, \qquad t \in[0,T].
		\]	
	The Cameron-Martin Hilbert space 
	$\mathcal{H}^{H,d_1}= \mathcal{H}^{H}([0,T], \mathbb{R}^{d_1})$
	for $(B^H_t)_{0\le t\le T}$ is  defined by
	\[
	\mathcal{H}^{H,d_1}=\big\{\mathcal{K_H} \dot{h}: \dot{h}\in L^2\left([0,T], \mathbb{R}^{d_1}\right)\big\}
	\]
(Note that $\dot h$ is {\it not} the time derivative of $h$.) 
It should be recalled that $\mathcal{H}^{H,d_1} \subset C^{H'}([0,T], \mathbb{R}^{d_1})$ for all $H' \in (0,H)$.
The scalar inner product on  $\mathcal{H}^{H,d_1}$ is defined by
	\[
	\langle h, g\rangle_{\mathcal{H}^{H,d_1}}=\langle \mathcal{K_H} \dot{h},\mathcal{K_H} \dot{g}\rangle_{\mathcal{H}^{H,d_1}}:=\langle\dot{h}, \dot{g}\rangle_{L^2}.
	\]
	Thus, $\mathcal{K_H}:L^2([0,T],\mathbb{R}^{d_1}) \to {\mathcal{H}^{H,d_1}}$ is a unitary isometry. For more details, see {\cite{1999Decreusefond}}.

	We also 	consider a standard ${d_2}$-dimensional  Brownian motion $(W_t)_{t\in [0,T]}$.
	 Throughout, $(W_t)$ and $(B^H_t)$ are assumed to be independent.
	 The $d$-dimensional 
	 process $(B^H_t ,W_t)_{t\in [0,T]}$ is called  mixed fBm 
	 of Hurst parameter $H$.
	The Cameron-Martin Hilbert space $\mathcal{H}^{\frac{1}{2},d_2} =\mathcal{H}^{\frac{1}{2}}([0,T], \mathbb{R}^{d_1})$ 
	for  $(W_t)_{t\in  [0,T]}$  is defined by
	\[
	\mathcal{H}^{\frac{1}{2},d_2}:=\big\{  v \in C([0,T], \mathbb{R}^{d_2}) :  v _{t}=\int_{0}^{t}  v^{\prime} _{s} d s \text { for all } t\in[0,T] \text { for some  }v^{\prime} \in L^2([0,T],\mathbb{R}^{d_2})\big\}.
	\]
We define $\langle v, w \rangle_{\mathcal{H}^{\frac{1}{2},d_2}}^{2}:=
\int_{0}^{T} \langle v^{\prime} _{t}, w^{\prime} _{t}\rangle_{\mathbb{R}^{d_2}}^{2} d t<\infty$.
	Then $\mathcal{H}:={{\mathcal{H}^{H,d_1}}\oplus{\mathcal{H}^{\frac{1}{2},d_2}}}$ is the Cameron-Martin subspace for  mixed fBm $(B_t^H, W_t)_{0\le t\le T}$. 
	For $0<N<\infty$, we set 
	\[
	S_N=\left\{(u,v) \in \mathcal{H}: 
	\frac{1}{2}\|(u,v)\|_{\mathcal{H}}^2 := \frac{1}{2} 
	(\|u\|_{\mathcal{H}^{H,d_1}}^{2} +\|v \|_{\mathcal{H}^{\frac{1}{2},d_2}}^{2})
	\leq N\right\}. 
	\]
	Equipped with the weak topology, the ball $S_N$ can
	be metrized as a compact Polish space.

Let 	$(\Omega, \mathcal{F}, \mathbb{P})$ be the $d$-dimensional 
	 classical Wiener space, namely,  
	  {\rm (i)} $\Omega=C_0 ([0,T], \mathbb{R}^d )$
	     is the space of $\mathbb{R}^d$-valued continuous paths 
	       starting at $0$ with the uniform topology, 
	           {\rm (ii)} $\mathbb{P}$ is
	              the $d$-dimensional Wiener measure and 
	          {\rm (iii)} $\mathcal{F}$ is the $\mathbb{P}$-completion of
	          the Borel $\sigma$-field on $\Omega$.             
The coordinate process is denoted by $(B_t, W_t)_{t\in [0,T]}$,
  which is of course a standard
     $d$-dimensional Bm under $\mathbb{P}$. 
	  Denote by $\{\mathcal{F}_t\}_ {t \in[0,T]}$   
	   the natural augmented filtration, that is,
	     $\mathcal{F}_t :=\sigma \{(B_s,W_s): 0 \leq s \leq t\} \vee \mathcal{N}$ and $\mathcal{N}$ is the set of all $\mathbb{P}$-negligible events. 
	     As is well-known, $\{\mathcal{F}_t\}$ satisfies the usual condition.
		        Construct $B^H$ from $B$ by using \eqref{vol-rep}.
		             Then, $(B^H_t, W_t)_{t\in [0,T]}$ is mixed fBm 
		               with Hurst parameter $H$ defined on $(\Omega, \mathcal{F}, \mathbb{P})$.
		        It is known that $\mathcal{F}_t=\sigma \{(B^H_s,W_s): 0 \leq s \leq t\} \vee \mathcal{N}$ for all $t$.     
		      For the rest of this paper, we work on 
		         $(\Omega, \mathcal{F}, \mathbb{P})$ and 
		         our mixed fBm $(B^H, W)$ is  realized in the above way.

We denote by $\mathcal{B}_b^N$, $0<N<\infty$, 
the set of all $\mathbb{R}^d$-valued  $\{\mathcal{F}_t\}$-progressively measurable processes $(\phi_t, \psi_t)_{t \in [0,T]}$ on 
		         $(\Omega, \mathcal{F}, \mathbb{P})$ satisfying that 
\[
\frac12 \int_0^T  ( |\phi_t|^2_{\mathbb{R}^{d_1}}+ |\psi_t|^2_{\mathbb{R}^{d_2}} )  dt  \le N,
\qquad 
\mbox{$\mathbb{P}$-a.s.}
\]
and set $\mathcal{B}_b =\cup_{0<N<\infty} \mathcal{B}_b^N$.
    For every $(\phi, \psi) \in \mathcal{B}_b$, the law of the process
      $(B + \int_0^\cdot  \phi_s ds, W+ \int_0^\cdot  \psi_s ds)$ is 
        mutually absolutely continuous to 
            that of $(B, W)$, which is $\mathbb{P}$.
              This can be easily verified  since Girsanov's theorem 
              and Novikov's criterion are available
                 in this case.

We denote by $\mathcal{A}_b^N$, $0<N<\infty$, the set of all 
	$\mathbb{R}^d$-valued $\{\mathcal{F}_t\}$-progressively measurable  processes $(u_t, v_t)_{t \in [0,T]}$ on 
		         $(\Omega, \mathcal{F}, \mathbb{P})$ of the form  	
\[
(u, v)=\Bigl( \mathcal{K}_H \dot{u}, \, \int_0^{\cdot} v^{\prime}_s ds \Bigr),
\qquad 
\mbox{$\mathbb{P}$-a.s.}\quad
\mbox{for some $(\dot{u}, v^{\prime}) \in \mathcal{B}_b^N$.}
\]	
We set $\mathcal{A}_b =\cup_{0<N<\infty} \mathcal{A}_b^N$.
Every $(u, v) \in  \mathcal{A}_b^N$ can be viewed as an $S_N$-valued random 
 variable.
		Since $S_N$ is compact, $\{ \mathbb{P}\circ (u, v)^{-1} :  
		  (u, v) \in \mathcal{A}_b^N\}$ is automatically tight.	
			 {It is easy to see that, for every $(u, v) \in \mathcal{A}_b$}, 
			 the law of the process $(B^H +u, W+v)$ is 
        mutually absolutely continuous to  that of $(B^H, W)$.

A variational representation formula for  mixed fBm is now given.
		\begin{prop}\label{prop1}
			Let $\Phi \colon \Omega \to \mathbb{R}$ be a bounded Borel measurable function. Then, we have
			\begin{eqnarray*}\label{2-1}
			- \log \mathbb{E}\Big[\exp \big(-{\Phi ( B^H, W)}\big)\Big] 
			=\inf _{(u,v) \in \mathcal{A}_b} \mathbb{E}\Big[\Phi( B^H+u, W+v)+\frac{1}{2}\|(u,v)\|_{\mathcal{H}}^2\Big].
			\end{eqnarray*}
	\end{prop}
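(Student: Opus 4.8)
The plan is to reduce the mixed-fBm variational formula to the classical Bou\'e--Dupuis representation for Brownian motion, which the paper has already cited (see \cite{BP_book}). The key observation is that, by construction in Section~\ref{sec-2-2}, our mixed fBm is realized as $(B^H, W) = (\mathcal{K}_H \dot B, W)$ where $(B,W)$ is the coordinate Brownian motion on the $d$-dimensional classical Wiener space $(\Omega,\mathcal{F},\mathbb{P})$, and the map $B \mapsto B^H$ is a measurable isomorphism intertwining the two Gaussian structures, with coinciding augmented filtrations. Moreover, by definition an element $(u,v) \in \mathcal{A}_b$ has the form $(u,v) = (\mathcal{K}_H \dot u, \int_0^\cdot v'_s\,ds)$ with $(\dot u, v') \in \mathcal{B}_b$, and the isometry property of $\mathcal{K}_H$ gives $\|(u,v)\|_{\mathcal{H}}^2 = \|\dot u\|_{L^2}^2 + \|v'\|_{L^2}^2 = \int_0^T(|\dot u_t|^2 + |v'_t|^2)\,dt$.

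First I would set $\Psi \colon \Omega \to \mathbb{R}$, $\Psi(\omega) := \Phi(\mathcal{K}_H \dot\omega^{(1)}, \omega^{(2)})$, where $\omega = (\omega^{(1)}, \omega^{(2)})$ denotes the two blocks of coordinates; since $B \mapsto B^H$ is Borel measurable and $\Phi$ is bounded Borel, $\Psi$ is a bounded Borel measurable function on $\Omega$. Observe that $\Psi(B,W) = \Phi(B^H, W)$, so the left-hand side of the claimed identity equals $-\log\mathbb{E}[\exp(-\Psi(B,W))]$. Now apply the classical Bou\'e--Dupuis representation on the $d$-dimensional classical Wiener space:
\[
-\log \mathbb{E}\Big[\exp\big(-\Psi(B,W)\big)\Big] = \inf_{(\phi,\psi)\in \mathcal{B}_b} \mathbb{E}\Big[\Psi\Big(B+\textstyle\int_0^\cdot \phi_s\,ds,\, W+\int_0^\cdot \psi_s\,ds\Big) + \frac12\int_0^T(|\phi_t|^2+|\psi_t|^2)\,dt\Big].
\]
(Strictly, the cited representation is stated with an infimum over all progressively measurable square-integrable controls; one then notes that restricting to the bounded class $\mathcal{B}_b$ does not change the infimum, either by a standard truncation argument or because the cited formulation already allows this — I would cite the precise form from \cite{BP_book}.)

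Next I would translate the right-hand side. For $(\phi,\psi) = (\dot u, v') \in \mathcal{B}_b$, the shifted fBm block satisfies $\mathcal{K}_H\big(\widehat{B + \int_0^\cdot \phi_s ds}\big) = \mathcal{K}_H\dot B + \mathcal{K}_H \phi = B^H + u$ by linearity of $\mathcal{K}_H$ and the defining relation $u = \mathcal{K}_H \dot u$; the shifted Bm block is simply $W + \int_0^\cdot v'_s\,ds = W + v$. Hence
\[
\Psi\Big(B+\textstyle\int_0^\cdot \phi_s ds,\, W+\int_0^\cdot \psi_s ds\Big) = \Phi\big(B^H + u,\, W+v\big),
\]
and the quadratic cost $\frac12\int_0^T(|\phi_t|^2+|\psi_t|^2)\,dt$ equals $\frac12\|(u,v)\|_{\mathcal{H}}^2$ by the isometry. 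Since $(\phi,\psi)\mapsto(u,v)=(\mathcal{K}_H\phi,\int_0^\cdot\psi_s ds)$ is a bijection from $\mathcal{B}_b$ onto $\mathcal{A}_b$, the infimum over $\mathcal{B}_b$ on the right-hand side is exactly the infimum over $\mathcal{A}_b$ appearing in Proposition~\ref{prop1}. This yields the claimed identity.

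The main obstacle — really the only delicate point — is justifying the measurability and integrability bookkeeping precisely: one must check that $\Psi$ inherits boundedness and Borel measurability from $\Phi$ through the (only essentially defined, off a null set) isomorphism $B\mapsto B^H$, that the filtration identification $\mathcal{F}_t = \sigma\{(B^H_s,W_s):s\le t\}\vee\mathcal{N}$ makes progressive measurability in the $(B,W)$-picture equivalent to progressive measurability in the $(B^H,W)$-picture (so that the control classes $\mathcal{B}_b$ and $\mathcal{A}_b$ genuinely correspond), and that restricting from arbitrary finite-energy controls to the uniformly bounded-energy class $\mathcal{B}_b$ leaves the infimum unchanged. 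All of these are standard consequences of the facts already recorded in Section~\ref{sec-2-2} together with the structure of the classical Bou\'e--Dupuis theorem, so the proof is short once the dictionary between the two pictures is set up carefully.
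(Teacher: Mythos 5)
Your proposal is correct and follows essentially the same route as the paper: compose $\Phi$ with the measurable map $(B,W)\mapsto(B^H,W)$, apply the classical Bou\'e--Dupuis formula on the $d$-dimensional Wiener space, and transport controls and costs through the unitary $\mathcal{K}_H$; the null-set subtlety you flag at the end is exactly the point the paper resolves by invoking mutual absolute continuity of the shifted laws (Girsanov--Novikov).
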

	
\begin{proof}
First, let us recall Bou\'e-Dupuis' variational representation formula for
 a standard Bm.  It states that, for any bounded Borel measurable function $F$ on $\Omega$, we have  
  {\begin{eqnarray*}\label{2-1}
 		- \log \mathbb{E}\Big[\exp \big(-{F( B, W)}\big)\Big] 
 		=\inf _{(\phi, \psi) \in \mathcal{B}_b} 
 		\mathbb{E}\Big[F( B+\int_0^\cdot\phi_s ds, W+\int_0^\cdot\psi_s ds)+\frac{1}{2}\|(\phi, \psi)\|_{L^2 ([0,T], \mathbb{R}^d)}^2\Big].
 \end{eqnarray*}}
See \cite[Theorems 3.14 and 3.17]{BP_book} for example.

In this formula,
  it should be noted that $F$ can be a {\it $\mathbb{P}$-equivalence class} of 
   bounded measurable functions (instead of an {\it everywhere-defined} one).
  	 That is, if $F = \hat{F}$, $\mathbb{P}$-a.s. (i.e.
	  $F (B,W) = \hat{F} (B,W)$, $\mathbb{P}$-a.s.),  
	    then we have $F(B + \int_0^\cdot  \phi_s ds, W+ \int_0^\cdot  \psi_s ds) = \hat{F} (B + \int_0^\cdot  \phi_s ds, W+ \int_0^\cdot  \psi_s ds) $,  $\mathbb{P}$-a.s. for any $(\phi, \psi) \in \mathcal{B}_b$.
	    	    Here, we used the mutual absolute continuity mentioned above.
		    
Finally, by letting $F$ be the measurable map $(B, W) \mapsto \Phi (B^H, W)$
  (and replacing the symbol $(\phi, \psi)$ by $(\dot{u}, v^\prime)$), 
   we obtain the desired formula.  This completes the proof.
	 	\end{proof}

\begin{rem}
In preceding works \cite{2020Budhiraja, 2022Gailus}
on large deviations for  SDEs driven by (mixed) fBm, 
X. Zhang's generalized variational representation formula
\cite[Theorem 3.2]{2009Zhang} on abstract Wiener spaces is used. 
 However, we avoid it because this generalized formula is based on a
  deep theory of transformations on abstract Wiener spaces
    and therefore looks a little bit too heavy for our purpose.
   	\end{rem}


	\section{Assumptions and Statement of Main Result}\label{sec-2-3}
	In this section, we first introduce assumptions of our main theorem and then state our main theorem.
{
In what follows, $1/2<H< 1$ will always be assumed. 
In the slow-fast system (\ref{1}), the small parameters satisfy 
$0 < \delta <\varepsilon \le 1$. 
Later we will assume $\delta =o (\varepsilon)$ and 
let $\varepsilon \to 0$.
	}

	To ensure the existence and uniqueness of solutions to the system (\ref{1}) we assume:
	\begin{itemize}
		\item[(\textbf{A1}).] The function $\sigma_1$ is of class of $C^1$. There exists a  constant $L> 0$ such that for  any $ x_1 , x_2\in \mathbb{R}^{m}$, 
		$$\left|\nabla \sigma_1\left( x_1\right)\right| \leq L,	\qquad
		\left|\nabla \sigma_1\left( x_1\right)-\nabla \sigma_1\left( x_2\right)\right| \leq L\left|x_1-x_2\right|$$
			hold. Here, $\nabla$ is the standard gradient operator on $\mathbb{R}^{m}$.
		\item[(\textbf{A2}).] There exists a constant $L> 0$ such that for  any $ (x_1,y_1) $,  $ (x_2,y_2)\in \mathbb{R}^{m} \times\mathbb{R}^{n}$, 
		\begin{equation*}
		\begin{aligned}
		&\left|f_1\left( x_1, y_1\right)-f_1\left( x_2, y_2\right)\right| +
		\left|f_2\left(x_1, y_1\right)-f_2\left(x_2, y_2\right)\right|
		\\
		&\qquad \qquad 
		+\left|\sigma_2\left(x_1, y_1\right)-\sigma_2\left(x_2, y_2\right)\right|  \leq L\left(\left|x_1-x_2\right|+\left|y_1-y_2\right|\right), 
		\end{aligned}
		\end{equation*}	
		and
	 {	\begin{equation*}
		\sup_{y_1\in\mathbb{R}^{n}}(\left|f_1\left(x_1, y_1\right)\right| +\left|\sigma_2\left(x_1, y_1\right)\right| ) \leq L\left(1+\left|x_1\right|\right)
		\end{equation*}}
			hold.
	\end{itemize}	
Clearly, (\textbf{A2}) implies that there exists a constant $L^{\prime}>0$ such that for any $(x,y)\in \mathbb{R}^m \times \mathbb{R}^n$, 	$$
\left|f_2\left(x, y\right)\right| \leq L^{\prime}\left(1+\left|x\right|+\left|y\right|\right)$$
holds.

	Under Assumptions (\textbf{A1}) and (\textbf{A2}) above,  one can deduce from \cite[Theorem 2.2]{Guerra} that the slow-fast system (\ref{1}) admits a unique (pathwise) solution $(x^{(\varepsilon, \delta)},y^{(\varepsilon,\delta)})$. Then there is a measurable map 
	\[
	\mathcal{G}^{(\varepsilon, \delta)} (\sqrt{\varepsilon}\bullet,\sqrt{\varepsilon}\star): C_0\left([0,T], \mathbb{R}^d\right) \rightarrow C^{\alpha}\left([0,T], \mathbb{R}^m\right)
	\]
 {with $1-H<\alpha<1/2$} such that
	$x^{(\varepsilon,\delta)}:=\mathcal{G}^{\varepsilon,\delta}(\sqrt \varepsilon B^H, \sqrt \varepsilon W)$. 
{In other words, this is 
	a Borel measurable version of the slow component of 
	 the solution map of SDE (\ref{1}).
	}

In order to study an averaging principle and 
a large deviation principle for the system (\ref{1}),
we further assume: 	
\begin{itemize}
		\item[(\textbf{A3}).]  There exist positive constants $C>0$ and $\beta_i >0 \,(i=1,2)$ such that for any  $ (x,y_1),(x,y_2)\in \mathbb{R}^{m} \times\mathbb{R}^{n}$
	\begin{equation*}
	\begin{aligned}
	2\left\langle y_1-y_2, f_2\left(x, y_1\right)-f_2\left(x, y_2\right)\right\rangle+\left|\sigma_2\left(x, y_1\right)-\sigma_2\left(x, y_2\right)\right|^2 
	 &\leq-\beta_1\left|y_1-y_2\right|^2, \\
	2\left\langle y_1, f_2\left(x, y_1\right)\right\rangle+\left|\sigma_2\left(x, y_1\right)\right|^2 & \leq-\beta_2\left|y_1\right|^2+C|x|^2+C
	\end{aligned}
	\end{equation*}
	hold.
	\end{itemize}		
{
We remark that Assumptions (\textbf{A1})--(\textbf{A3})  are 
slightly stronger than their counterparts in \cite{2023Pei}.
So, we can use results in \cite{2023Pei}.	
}
Assumption  (\textbf{A3}) ensures that the solution to the following equation with frozen ${x}\in\mathbb{R}^{m}$
\[
d\tilde{y}_t = f_{2}({x}, \tilde{y}_t) dt +\sigma_2( {x}, \tilde{y}_t)dW_{t}
\]
has a unique invariant probability measure $\mu_{{x}}$, which can be deduced from \cite[Theorem 6.3.2]{DaPratoZabczyk}. 

	The skeleton equation is defined  as follows
\begin{eqnarray}\label{3}
d\tilde{x}_t = \bar{f}_1(\tilde{x}_t)dt + \sigma_{1}( \tilde{x}_t)du_t,\quad\tilde{x}_0=x_0
\end{eqnarray}	
with $\bar{f}_1(x)=\int_{\mathbb{R}^{d}}f_{1}(x, { y})\mu_{x}(d{ y})$. 
By similar estimates to \cite[Lemma 3.2]{2023Pei},  
{
$\bar{f}_1$ is   Lipschitz continuous and linear growth. 
}
For $(u, v) \in S_{N}$,  there exists a unique solution $\tilde {x} \in  W_0^{\alpha, \infty}([0,T], \mathbb{R}^{m})$  to the skeleton equation (\ref{3}), see  \cite[Proposition 3.6]{2020Budhiraja}. Moreover, it holds that
$$\|\tilde{x}\|_{1-\alpha}\le c,$$
for some constant $c=c_N$ independent of $(u, v)\in S_N$. Since $1-\alpha>\alpha$, we have the following compact embedding $C^{1-\alpha}([0,T],\mathbb{R}^{m}) \subset C^{\alpha}([0,T],\mathbb{R}^{m})$.

We  also define a map
	\[
	\mathcal{G}^{0}: \mathcal{H} \rightarrow  C^{\alpha}\left([0,T], \mathbb{R}^m\right)
	\]
	by  $\tilde{x}=\mathcal{G}^{0}(u, v)$. In other words, this is the solution map of the above skeleton equation. Note that $\mathcal{G}^{0}(u, v)$ is independent	of $v$.

	
	Now, we provide a precise statement of our main theorem.
	 {
	 We will show this theorem in Section \ref{sec.5}.	
	 } 
	  	\begin{thm}\label{thm}
	Let $H\in(1/2,1)$  and $1-H<\alpha<1/2$ and assume (\textbf{A1})--(\textbf{A3}).
	{Assuming $\delta =o (\varepsilon)$, we let $\varepsilon \to 0$.
	}
		  Then, the slow variable $x^{(\varepsilon,\delta)}$ of the system (\ref{1}) satisfies a large deviation principle on the  H\"older path space 
	$C^{\alpha}\left([0,T], \mathbb{R}^m\right)$ with the good rate function $I: C^{\alpha}\left([0,T], \mathbb{R}^m\right)\rightarrow [0, \infty]$   defined by
		\begin{eqnarray*}\label{rate}
		I(\xi) &=& {
		\inf\Big\{\frac{1}{2}\|u\|^2_{\mathcal{H}^{H,d_1}}~:~{ u\in \mathcal{H}^{H,d_1} 
		\quad\text{such that} \quad\xi =\mathcal{G}^{0}(u, 0)}\Big\} 
		}
		\cr
		&=& \inf\Big\{\frac{1}{2}\|(u,v)\|^2_{\mathcal{H}} ~:~ {( u,v)\in \mathcal{H}\quad\text{such that} \quad\xi =\mathcal{G}^{0}(u, v)}\Big\},
		\qquad 
		    \xi\in C^{\alpha}\left([0,T], \mathbb{R}^m\right).		\end{eqnarray*}
	\end{thm}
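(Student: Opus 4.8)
The plan is to follow the by-now standard \emph{weak convergence approach} (Budhiraja--Dupuis), using the variational representation in Proposition \ref{prop1} together with the equivalence of the Laplace principle and the large deviation principle. By the Varadhan/Bryc equivalence, since the rate function $I$ will be shown to be good, it suffices to prove the Laplace principle: for every bounded continuous $\Phi\colon C^\alpha([0,T],\mathbb{R}^m)\to\mathbb{R}$,
\begin{eqnarray*}
\lim_{\varepsilon\to 0}-\varepsilon\log\mathbb{E}\Big[\exp\big(-\tfrac1\varepsilon\Phi(x^{(\varepsilon,\delta)})\big)\Big]
=\inf_{\xi\in C^\alpha}\big\{\Phi(\xi)+I(\xi)\big\}.
\end{eqnarray*}
Applying Proposition \ref{prop1} to the functional $F=\tfrac1\varepsilon\Phi\circ\mathcal{G}^{(\varepsilon,\delta)}(\sqrt\varepsilon\bullet,\sqrt\varepsilon\star)$ and rescaling the controls (writing $u=\sqrt\varepsilon\,\mathcal{K}_H\dot u$, $v=\sqrt\varepsilon\int_0^\cdot v'_s\,ds$), one gets
\begin{eqnarray*}
-\varepsilon\log\mathbb{E}\Big[\exp\big(-\tfrac1\varepsilon\Phi(x^{(\varepsilon,\delta)})\big)\Big]
=\inf_{(u,v)\in\mathcal{A}_b}\mathbb{E}\Big[\Phi\big(\bar x^{(\varepsilon,\delta),u,v}\big)+\tfrac12\|(u,v)\|_{\mathcal{H}}^2\Big],
\end{eqnarray*}
where $\bar x^{(\varepsilon,\delta),u,v}$ is the slow component of the \emph{controlled} slow-fast system obtained by shifting $B^H\mapsto B^H+u$, $W\mapsto W+v$ in (\ref{1}). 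The proof then splits, as usual, into a lower bound (``$\geq$'') and an upper bound (``$\leq$''), often packaged as two statements: (I) convergence of the controlled process to the skeleton solution when controls converge weakly, and (II) a compactness/lower-semicontinuity statement for the skeleton map $\mathcal{G}^0$ on the level sets $S_N$.

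For part (II), one uses that for $(u,v)\in S_N$ the skeleton solution $\tilde x=\mathcal{G}^0(u,v)$ satisfies $\|\tilde x\|_{1-\alpha}\le c_N$ uniformly (stated in the excerpt), so by the compact embedding $C^{1-\alpha}\subset C^\alpha$ the set $\{\mathcal{G}^0(u,v):(u,v)\in S_N\}$ is precompact in $C^\alpha$; combined with continuity of $\mathcal{G}^0$ from $(S_N,\text{weak})$ to $C^\alpha$ — which follows from the continuity of the Young/generalized Riemann--Stieltjes integral and Gronwall-type estimates, essentially \cite[Proposition 3.6]{2020Budhiraja} — one obtains that $I$ is a good rate function and the Laplace upper bound (take near-optimizers $u_\varepsilon\in S_N$, extract a weakly convergent subsequence, pass to the limit). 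For part (I) — the genuinely new and hard part — one must show that if $(u^\varepsilon,v^\varepsilon)\in\mathcal{A}_b^N$ converges in law (in the weak topology of $S_N$) to $(u,v)$, then $\bar x^{(\varepsilon,\delta),u^\varepsilon,v^\varepsilon}$ converges in law in $C^\alpha$ to $\mathcal{G}^0(u,0)$. Here the slow equation reads
\begin{eqnarray*}
\bar x^{(\varepsilon,\delta)}_t=x_0+\int_0^t f_1(\bar x^{(\varepsilon,\delta)}_s,\bar y^{(\varepsilon,\delta)}_s)\,ds+\sqrt\varepsilon\int_0^t\sigma_1(\bar x^{(\varepsilon,\delta)}_s)\,dB^H_s+\int_0^t\sigma_1(\bar x^{(\varepsilon,\delta)}_s)\,du^\varepsilon_s,
\end{eqnarray*}
and, crucially, because $\delta=o(\varepsilon)$ the scaling in the fast equation is such that the control $v^\varepsilon$ enters with a vanishing coefficient, so in the limit the fast component behaves like the uncontrolled frozen process and its occupation measures homogenize. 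The main obstacle is therefore the \textbf{averaging (Khasminskii) argument for the controlled fast component}: one partitions $[0,T]$ into intervals of length $\Delta=\Delta(\varepsilon)$ with $\delta\ll\Delta\ll1$, freezes the slow variable on each block, compares $\bar y^{(\varepsilon,\delta)}$ with an auxiliary fast process driven only by a time-changed Brownian motion (no control), uses the exponential ergodicity from Assumption (\textbf{A4}) to replace block time-averages of $f_1(\bar x,\bar y)$ by $\bar f_1(\bar x)$, and controls the error of this replacement uniformly in the control via the a priori $L^2$-bound $\tfrac12\|(u^\varepsilon,v^\varepsilon)\|_{\mathcal H}^2\le N$. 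One must simultaneously establish uniform-in-$\varepsilon$ moment bounds and uniform $C^{1-\alpha}$-(or $W_0^{\alpha,\infty}$-)estimates on $\bar x^{(\varepsilon,\delta)}$ — using Assumption (\textbf{A3}) (boundedness of $f_1$) and the moment bounds on $\Lambda_\alpha(B^H)$ from the Remark — to get tightness in $C^\alpha$, and handle the Young integral term $\int_0^\cdot\sigma_1(\bar x^{(\varepsilon,\delta)}_s)\,du^\varepsilon_s$ via the continuity of the map $(x,u)\mapsto\int\sigma_1(x)\,du$ on the relevant path spaces.

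Finally I would assemble the two bounds: the lower bound by choosing controls $u^\varepsilon$ that are $\varepsilon$-optimal in the variational formula, invoking part (I) along a weakly convergent subsequence together with lower semicontinuity of $(u,v)\mapsto\tfrac12\|(u,v)\|_{\mathcal H}^2$ under weak convergence and Fatou's lemma; and the upper bound by, given $\xi$ with $I(\xi)<\infty$, picking a near-optimal \emph{deterministic} control $u$ with $\mathcal{G}^0(u,0)=\xi$ (and $v=0$), plugging the constant control $(u,0)\in\mathcal{A}_b$ into the variational formula, and using part (I) (with deterministic, hence trivially weakly convergent, controls) to pass to the limit. The equality of the two expressions for $I(\xi)$ in the theorem statement is immediate since $\mathcal{G}^0(u,v)$ does not depend on $v$ and $\|(u,v)\|_{\mathcal H}^2=\|u\|_{\mathcal H^{H,d_1}}^2+\|v\|_{\mathcal H^{1/2,d_2}}^2$ is minimized over $v$ at $v=0$.
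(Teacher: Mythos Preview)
Your proposal is correct and follows essentially the same route as the paper: Step~1 there is your (II), Step~2 is your (I) via the Khasminskii time-discretization with an auxiliary frozen-slow process (the paper in fact proves $\mathbb{E}\|\tilde x^{(\varepsilon,\delta)}-\mathcal G^0(u^{(\varepsilon,\delta)},v^{(\varepsilon,\delta)})\|_{\alpha\text{-hld}}^2\to 0$ and then invokes Step~1, rather than a separate tightness-plus-identification argument), and Step~3 assembles the Laplace upper/lower bounds and goodness of $I$ exactly as you describe. The only technical refinements you leave implicit are a localization on $\{\|B^H\|_{1-\alpha,\infty,T}\le R\}$ in Step~2 (needed to turn the pathwise Young-integral estimates into uniform constants before applying the weighted-norm Gronwall argument) and, in the Laplace lower bound, the stopping-time truncation of the near-optimal controls to pass from $\mathcal A_b$ to $\mathcal A_b^N$.
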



{
\begin{rem}\label{rem1}
We make a comment on the topology of the path space. 
At the moment 
there are two works in which the weak convergence method 
is applied to stochastic differential equations driven by  (mixed) fBm
(see \cite{2020Budhiraja, 2022Gailus}).
In these works, the topology is the uniform one on the path space.
In our main theorem above, however,
the topology is the $\alpha$-H\"older topology ($1-H < \alpha <1/2$).
This is a slight improvement.
%
%
\end{rem}
}
\begin{rem} 	
	 {The variational formula  cannot be used on the   space $ C^\alpha ([0,T], \mathbb{R}^{k} )$ directly since  it is  not seperable and not a  Polish space. Then we introduce   a   Banach subspace $H^\alpha = H^\alpha  ([0,T], \mathbb{R}^{k})$, which  is the space that for all $g\in C^\alpha([0,T], \mathbb{R}^{k})$,  equipped with the norm
		\begin{eqnarray*}\label{111}
			\lim_{\delta\to0+}\sup_{\substack{|t-s|\le \delta\\0 \leq s<t \le T}} \frac{|g(t)-g(s)|}{(t-s)^\alpha}=0. 
		\end{eqnarray*}
The space $H^\alpha$ is a separable space and 
		$$
		H^\alpha=\overline{\bigcup_{\kappa>0}C^{\alpha+\kappa}},
		$$
		where the closure is taken in  the norm $\|\cdot\|_{\alpha\textrm{-hld}}$. The above space $H^\alpha$ is continuously embedded in the usual $\alpha$-H\"older space. For more information, see  \cite{Ciesielski}.}
	
	 {For any given $\alpha$ that satisfies the condition on the H\"older exponent,
		we could find slightly large $\beta$ such that it still satisfies the condition.
		Then, if our process takes values in  $ C^\beta ([0,T], \mathbb{R}^{k} )$,  it is not too difficult to see that it also belongs to the space $ H^\alpha$.
		Then we  could apply   variational representation 
			on the space $H^\alpha$  and only need to  prove the weak convergence method under the $\alpha$-H\"older norm.
		Finally,  the same LDP holds on the usual $\alpha$-H\"older space by employing the contraction principle \cite[Theorem 4.2.1]{Dembo2009Large} directly. }
\end{rem}

{
\begin{rem}\label{rem2}
%
In \cite{2022Gailus}, Gailus and Gasteratos established the large deviation for a slow-fast system driven by  mixed fBm. 
We remark that there exist some differences between \cite{2022Gailus} and this work (except the topology issue in the above remark): 
(1) For the slow-fast system in \cite{2022Gailus}, the diffusion term in the   fast system is independent of slow component, but in our system \eqref{1}, the fast component fully depends on the slow component. (2) In \cite{2022Gailus}, they applied homogenization theory and viable pair method. But in this work, we  employ the method that is a combination  of     weak convergence criteria and Khasminskii averaging principle technique. An advantage of this approach is that the required conditions are easy to be verified. 
%
\end{rem}
}


	\section{Preliminary Lemmas}
	
	In this Section, $H$ and $\alpha$ are such that $H\in(1/2,1)$ and  $1-H<\alpha<1/2$. We assume $0<\delta<\varepsilon\le 1$.
	Those are basically fixed. 
	
	To prove \thmref{thm}, some prior estimates should be given. 
	Firstly, for a pair of control $(u^{(\varepsilon, \delta)}, v^{(\varepsilon, \delta)})\in \mathcal{A}^{b}$, consider the following controlled system associated to (\ref{1}):
	\begin{eqnarray}\label{2}
	\left
	\{
	\begin{array}{ll}
	d\tilde {x}^{(\varepsilon, \delta)}_t =& f_{1}(\tilde {x}^{(\varepsilon, \delta)}_t, \tilde {y}^{(\varepsilon, \delta)}_t)dt + \sigma_{1}(\tilde {x}^{(\varepsilon, \delta)}_t)du^{(\varepsilon, \delta)}_t+ \sqrt \varepsilon  \sigma_{1}(\tilde {x}^{(\varepsilon, \delta)}_t)dB^H_{t}\\
	\delta d\tilde {y}^{(\varepsilon, \delta)}_t =&  f_{2}( \tilde {x}^{(\varepsilon, \delta)}_t, \tilde {y}^{(\varepsilon, \delta)}_t)dt +\sqrt{\frac{ \delta}{\varepsilon}}\sigma_2(\tilde {x}^{(\varepsilon, \delta)}_t, \tilde {y}^{(\varepsilon, \delta)}_t)dv^{(\varepsilon, \delta)}_t+ {\sqrt \delta }\sigma_2(\tilde {x}^{(\varepsilon, \delta)}_t, \tilde {y}^{(\varepsilon, \delta)}_t)dW_{t},
	\end{array}
	\right.
	\end{eqnarray}
	with initial value $(\tilde x^{(\varepsilon, \delta)}_0, \tilde y^{(\varepsilon, \delta)}_0)=(x_0, y_0)\in \mathbb{R}^{m}\times \mathbb{R}^{n}$. It is not hard to verify that there exists a unique solution $(\tilde {x}^{(\varepsilon,\delta)}, \tilde {y}^{(\varepsilon,\delta)})=\mathcal{G}^{(\varepsilon, \delta)} (\sqrt{\varepsilon}B^H+u^{(\varepsilon,\delta)},\sqrt{\varepsilon}W+v^{(\varepsilon,\delta)})$ of the system (\ref{2}). 
	
	\begin{lem}\label{lem1}
		Assume (\textbf{A1})--(\textbf{A3}) and let  $p\ge1$ and  $N\in(0,\infty)$. Then, there exists $C>0$ such that for every $(u^{(\varepsilon,\delta)},v^{(\varepsilon,\delta)})\in \mathcal{A}_b^N$, we have
		\begin{equation*}
		\mathbb{E}\big[\|\tilde x^{(\varepsilon,\delta)}\|_{\alpha, \infty}^p\big] \leq C.
		\end{equation*}
		Here, $C$ is a positive constant which depends only on $p$ and $N$.
	\end{lem}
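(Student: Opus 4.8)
The plan is to derive an a priori bound on $\|\tilde{x}^{(\varepsilon,\delta)}\|_{\alpha,\infty}$ by working directly with the generalized Riemann--Stieltjes estimates recalled in Section~\ref{sec-2-1}, exploiting the boundedness of $f_1$ from Assumption~(\textbf{A3}) together with the Lipschitz/linear-growth control on $\sigma_1$ from Assumption~(\textbf{A1}). The key point is that in the controlled slow equation
\[
\tilde{x}^{(\varepsilon,\delta)}_t = x_0 + \int_0^t f_1(\tilde{x}^{(\varepsilon,\delta)}_s,\tilde{y}^{(\varepsilon,\delta)}_s)\,ds + \int_0^t \sigma_1(\tilde{x}^{(\varepsilon,\delta)}_s)\,du^{(\varepsilon,\delta)}_s + \sqrt{\varepsilon}\int_0^t \sigma_1(\tilde{x}^{(\varepsilon,\delta)}_s)\,dB^H_s,
\]
the drift term is trivially controlled since $|f_1|$ is bounded, so its contribution to $\|\cdot\|_{\alpha,[0,t]}$ is at most a constant multiple of $t^{1-\alpha}$ uniformly. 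The two remaining integrals against $u^{(\varepsilon,\delta)}\in\mathcal{A}_b^N$ and against $\sqrt{\varepsilon}B^H$ are handled by the Young-type inequality $|\int_0^t g\,dh|\le \Lambda_\alpha(h)\|g\|_{\alpha,1}$, which also localizes nicely on subintervals $[0,t]$.

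The main steps, in order, would be: first, fix $t\in[0,T]$ and estimate $\|\tilde{x}^{(\varepsilon,\delta)}\|_{\alpha,[0,t]}$ by splitting into the three contributions above; the drift contributes a bound $\le C$ by (\textbf{A3}). Second, for the fBm integral, use $|\int_0^\cdot \sigma_1(\tilde{x}_s)\,dB^H_s|$-type estimates of the form $\|\int_0^\cdot \sigma_1(\tilde{x}_s)\,dB^H_s\|_{\alpha,[0,t]}\le C\,\Lambda_\alpha(B^H)\,(1+\|\tilde{x}^{(\varepsilon,\delta)}\|_{\alpha,[0,t]})\,g(t)$ where $g(t)\to 0$ as $t\to 0$ (this is the standard local estimate for Young integrals, see \cite{2002Rascanu}); since $\sigma_1$ has bounded gradient, $\sigma_1(\tilde{x})$ inherits a bound in $W_0^{\alpha,1}$ controlled by $1+\|\tilde{x}\|_{\alpha,[0,t]}$. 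Third, for the control integral, the same Young estimate applies with $\Lambda_\alpha(u^{(\varepsilon,\delta)})$ in place of $\Lambda_\alpha(B^H)$; since $\|u^{(\varepsilon,\delta)}\|_{\mathcal{H}^{H,d_1}}^2\le 2N$ and $\mathcal{H}^{H,d_1}\hookrightarrow C^{H'}\subset W_T^{1-\alpha,\infty}$ with norm bounded by $\|u^{(\varepsilon,\delta)}\|_{\mathcal{H}^{H,d_1}}$, we get $\Lambda_\alpha(u^{(\varepsilon,\delta)})\le C_N$ deterministically. Fourth, partition $[0,T]$ into finitely many subintervals of length small enough that the coefficient of $\|\tilde{x}^{(\varepsilon,\delta)}\|_{\alpha,[0,t]}$ on the right-hand side is $\le 1/2$ (the number of subintervals depends on $\varepsilon\le 1$, $N$, and the random variable $\Lambda_\alpha(B^H)$, but can be made deterministic after conditioning on a level set of $\Lambda_\alpha(B^H)$); absorb and iterate across subintervals to conclude a pathwise bound of the form $\|\tilde{x}^{(\varepsilon,\delta)}\|_{\alpha,\infty}\le C(1+\Lambda_\alpha(B^H))^{\kappa}$ for some $\kappa$ depending only on the number of subintervals. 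Fifth, take $p$-th moments and invoke the fact that $\Lambda_\alpha(B^H)$ has moments of all orders (Remark after \eqref{vol-rep}, citing \cite[Lemma 7.5]{2002Rascanu}) to obtain $\mathbb{E}[\|\tilde{x}^{(\varepsilon,\delta)}\|_{\alpha,\infty}^p]\le C=C(p,N)$, uniformly in $0<\delta<\varepsilon\le 1$ and in the choice of control in $\mathcal{A}_b^N$.

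The main obstacle, I expect, is making the subinterval-partition argument genuinely uniform: the natural mesh size at which one can absorb the $\|\tilde{x}\|_{\alpha,[0,t]}$ term depends on $\Lambda_\alpha(B^H)$, which is random and unbounded, so the number of subintervals is a random quantity. The standard device is to first derive the pathwise estimate on the event $\{\Lambda_\alpha(B^H)\le M\}$ with a fixed number $n(M)$ of subintervals, obtaining a bound polynomial in $M$, and then recover the moment bound by integrating over $M$ using the Gaussian-type tail of $\Lambda_\alpha(B^H)$; alternatively one uses the Besov-type greedy-partition trick of R\u{a}\c{s}canu--Nualart to get a sharp dependence. Either way the dependence on the fast component is costless because $f_1$ is bounded, so $\tilde{y}^{(\varepsilon,\delta)}$ never enters the estimate --- this is precisely why (\textbf{A3}) is assumed here and is the reason the lemma holds with a constant independent of $\delta,\varepsilon$. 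A secondary technical point is checking that the local Young estimate constant $g(t)\to 0$ can be quantified explicitly (e.g. $g(t)=t^{\beta}$ for some $\beta>0$ depending on $\alpha$ and $H'$), which follows from the interpolation inequalities relating the $W_0^{\alpha,1}$ and H\"older norms stated in Section~\ref{sec-2-1}.
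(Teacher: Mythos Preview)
Your overall strategy is sound and matches the paper's: obtain a pathwise bound on $\|\tilde x^{(\varepsilon,\delta)}\|_{\alpha,\infty}$ depending only on $\Lambda_\alpha(B^H)$ and $N$ (the drift drops out by (\textbf{A3}), and $\Lambda_\alpha(u^{(\varepsilon,\delta)})\le C_N$ since $\mathcal{H}^{H,d_1}\hookrightarrow W_T^{1-\alpha,\infty}$), then take moments using Fernique. The gap is in the quantitative form of the pathwise bound. A subinterval iteration does \emph{not} give a polynomial bound: on each piece of length $\tau$ the $\|\cdot\|_{\alpha,[0,\tau]}$-norm of the Young integral is controlled by $C\Lambda\,\tau^{1-2\alpha}(1+\|\tilde x\|_{\alpha,[0,\tau]})$ (the integral part $\int_0^t |F_t-F_s|(t-s)^{-\alpha-1}\,ds$ forces the exponent $1-2\alpha$, not $1-\alpha$), so absorbing requires $\tau\sim\Lambda^{-1/(1-2\alpha)}$, hence $n(M)\sim M^{1/(1-2\alpha)}$ subintervals on $\{\Lambda\le M\}$. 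Iterating across $n(M)$ pieces multiplies constants and yields a bound of order $\exp\bigl(cM^{1/(1-2\alpha)}\bigr)$, not $M^\kappa$. Fernique then gives finite $p$-moments only when $1/(1-2\alpha)<2$, i.e.\ $\alpha<1/4$; for $\alpha\in[1/4,1/2)$ your argument breaks, and the lemma is stated for the full range $1-H<\alpha<1/2$.

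The paper avoids this by using the Nualart--R\u{a}\c{s}canu exponentially weighted norms $\|f\|_{\lambda,t}=\sup_{s\le t}e^{-\lambda s}|f(s)|$ and $\|f\|_{1,\lambda,t}=\sup_{s\le t}e^{-\lambda s}\int_0^s|f(s)-f(r)|(s-r)^{-\alpha-1}dr$, closes a pair of coupled inequalities for these two quantities by choosing $\lambda=\bigl(C(\Lambda+\|u\|_{\mathcal{H}^{H,d_1}})\bigr)^{1/(1-\alpha)}$, and only then reintroduces $e^{\lambda T}$. This produces $\|\tilde x^{(\varepsilon,\delta)}\|_{\alpha,\infty}\le C\exp\bigl(C(\Lambda+\sqrt{2N})^{1/(1-\alpha)}\bigr)$, and since $1/(1-\alpha)<2$ for every $\alpha<1/2$, Fernique applies in the full range. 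What you call the ``greedy-partition trick'' of Nualart--R\u{a}\c{s}canu is in fact this $\lambda$-weighting device; it is not a refinement of the subinterval method but a genuinely different closing argument that yields the sharper exponent you need.
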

\para{Proof}. {In this Proof, $C$ is a positive constant which depends only on $p$ and $N$ which may change from line to line.} Firstly, denote $\Lambda:=\Lambda_\alpha\left(B^H\right) \vee 1$, and for any $\lambda\ge 1$, set 
$$
\|f\|_{\lambda, t}:=\sup _{0 \leq s \leq t} e^{-\lambda s}|f(s)|
$$
and 
$$\|f\|_{1, \lambda, t}:=\sup _{0 \leq s \leq t} e^{-\lambda s} \int_0^s \frac{|f(s)-f(r)|}{(s-r)^{\alpha+1}} d r.$$ 

By Assumptions (\textbf{A1})--(\textbf{A2}) and the fact that $
\left|\int_0^t v_s d B_s^H\right| \leq \Lambda_\alpha\left(B^H\right)\|v\|_{\alpha, 1}
$, it deduces that
\begin{equation*}\label{3-2}
\begin{aligned}
\|\tilde x^{(\varepsilon,\delta)}\|_{\lambda, t} &=\sup _{0 \leq s \leq t} e^{-\lambda s}\left|x_0+\int_0^s f_1(\tilde x_r^{(\varepsilon,\delta)}, \tilde y_r^{(\varepsilon,\delta)}) d r+\int_0^s \sigma_1( \tilde x_r^{(\varepsilon,\delta)}) d u_r^\varepsilon +\sqrt{\varepsilon}\int_0^s \sigma_1( \tilde x_r^{(\varepsilon,\delta)}) d B_r^H\right| \cr
& \leq C \Lambda\big[1+\sup _{0 \leq s \leq t} \int_0^s e^{-\lambda(s-r)}\big(r^{-\alpha}\|\tilde x^{(\varepsilon,\delta)}\|_{\lambda, t}+\|\tilde x^{(\varepsilon,\delta)}\|_{1, \lambda, t}\big) d r\big] \cr
& \quad+ C\|u^\varepsilon\|_{1-\alpha,\infty} \big[1+\sup _{0 \leq s \leq t} \int_0^s e^{-\lambda(s-r)}\big(r^{-\alpha}\|\tilde x^{(\varepsilon,\delta)}\|_{\lambda, t}+\|\tilde x^{(\varepsilon,\delta)}\|_{1, \lambda, t}\big) d r\big] \cr
& \leq C \Lambda\big[1+\sup _{0 \leq s \leq t} \int_0^s e^{-\lambda(s-r)}\big(r^{-\alpha}\|\tilde x^{(\varepsilon,\delta)}\|_{\lambda, t}+\|\tilde x^{(\varepsilon,\delta)}\|_{1, \lambda, t}\big) d r\big] \cr
& \quad+ C\|u^\varepsilon\|_{\mathcal{H}^{H,d_1}} \big[1+\sup _{0 \leq s \leq t} \int_0^s e^{-\lambda(s-r)}\big(r^{-\alpha}\|\tilde x^{(\varepsilon,\delta)}\|_{\lambda, t}+\|\tilde x^{(\varepsilon,\delta)}\|_{1, \lambda, t}\big) d r\big] ,
\end{aligned}
\end{equation*}
where the second inequality comes from the fact that for a sufficiently small constant $\kappa>0$, $\|u^\varepsilon\|_{1-\alpha,\infty} \le C \|u^\varepsilon\|_{1-\alpha+\kappa}\le C\|u^\varepsilon\|_{H}\le C \|u^\varepsilon\|_{\mathcal{H}^{H,d_1}}$, see \cite[Lemma 4.1]{2020Budhiraja}.

After  using the estimate that 
\begin{eqnarray}\label{3-3}
\int_0^t e^{-\lambda(t-r)} r^{-\alpha} d r 
\leq \lambda^{\alpha-1} \sup _{z>0} \int_0^z e^{-y}(z-y)^{-\alpha} d y 
\leq C \lambda^{\alpha-1},
\end{eqnarray}
we can get 
\begin{eqnarray}\label{3-4}
\|\tilde x^{(\varepsilon,\delta)}\|_{\lambda, t} 
&\le& (C\Lambda+\|u^\varepsilon\|_{\mathcal{H}^{H,d_1}})\big(1+\lambda^{\alpha-1}\|\tilde x^{(\varepsilon,\delta)}\|_{\lambda, t}+\lambda^{-1}\|\tilde x^{(\varepsilon,\delta)}\|_{1, \lambda, t}\big).
\end{eqnarray}
It proceeds to estimate $\|\tilde x^{(\varepsilon,\delta)}\|_{1, \lambda, t}$. 
 {Firstly, we give some prior estimates of $$\mathcal{C}_1:=\sup _{0 \leq s \leq t} e^{-\lambda s} \int_0^s(s-r)^{-\alpha-1}\left|\int_r^s f(\tilde{x}_q^{(\varepsilon,\delta)},\tilde{y}_q^{(\varepsilon,\delta)}) d q\right| d r.$$
	By  (\textbf{A2}) and Fubini's theorem, we obtain
	\begin{eqnarray*}
		\mathcal{C}_1 
		&\leq & C \sup _{0 \leq s \leq t} e^{-\lambda s} \int_0^s(s-r)^{-\alpha-1} \int_r^s(1+|\tilde{x}_q^{(\varepsilon,\delta)}|) d q d r \\
		&\leq &  C\left(1+\sup _{0 \leq s \leq t} e^{-\lambda s} \int_0^s(s-r)^{-\alpha-1} \int_r^s(q-r)^\alpha \frac{|\tilde{x}_q^{(\varepsilon,\delta)}|}{(q-r)^\alpha} d q d r\right) \\
		&	\leq & C\left(1+\sup _{0 \leq s \leq t} e^{-\lambda s} \int_0^s(s-r)^{-\alpha-1} \int_r^s \frac{|\tilde{x}_q^{(\varepsilon,\delta)}|}{(q-r)^\alpha} d q d r\right) \\
		&= & C\left(1+\sup _{0 \leq s \leq t} e^{-\lambda s} \int_0^s \int_0^q(s-r)^{-\alpha-1}(q-r)^{-\alpha} d r|\tilde{x}_q^{(\varepsilon,\delta)}| d q\right) \\
		&\leq & C\left(1+ \sup _{0 \leq s \leq t} e^{-\lambda s} \int_0^s(s-q)^{-2 \alpha}|\tilde{x}_q^{(\varepsilon,\delta)}| d q\right) \\
		&\leq & C\left(1+ \sup _{0 \leq s \leq t} \int_0^s e^{-\lambda(s-q)}(s-q)^{-2 \alpha}\|\tilde{x}_q^{(\varepsilon,\delta)}\|_{\lambda, t} d q\right) .
\end{eqnarray*}
Here, the fifth line comes from the  estimate that   
\begin{eqnarray}\label{alpha}
\int_0^q(s-r)^{-\alpha-1}(q-r)^{-\alpha} d r= (s-q)^{-2 \alpha}\int_0^{q/{(s-q)}}(1+y)^{-\alpha-1}y^{-\alpha} d y
\le c_\alpha (s-q)^{-2 \alpha}
\end{eqnarray} 
where $s=r-(t-r)y$ and  $c_\alpha=\int_0^{\infty}(1+q)^{-\alpha-1} q^{-\alpha} d q=B(2 \alpha, 1-\alpha)$ with $B$ being the Beta function.}

Then, we define
\begin{eqnarray*}\label{3-5}
\mathcal{C}_2:=\int_0^t(t-s)^{-\alpha-1}\left|\int_s^t f(r) d B_r^H\right| d s.
\end{eqnarray*}
Using Fubini's theorem and some simple computation, it holds that
\begin{eqnarray*}\label{3-6}
\mathcal{C}_2 &\le & \Lambda\left(\int_0^t \int_s^t(t-s)^{-\alpha-1} \frac{|f(r)|}{(r-s)^\alpha} d r d s+\int_0^t \int_s^t \int_s^r(t-s)^{-\alpha-1} \frac{|f(r)-f(q)|}{(r-q)^{1+\alpha}} d q d r d s\right) \cr
&\le& \Lambda\left(\int_0^t \int_0^r(t-s)^{-\alpha-1}(r-s)^{-\alpha} d s|f(r)| d r+\int_0^t \int_0^r \int_0^q(t-s)^{-\alpha-1} d s \frac{|f(r)-f(q)|}{(r-q)^{1+\alpha}} d q d r\right) \cr
&\le&\Lambda\left(c_\alpha \int_0^t(t-r)^{-2 \alpha}|f(r)| d r+\int_0^t \int_0^r(t-q)^{-\alpha} \frac{|f(r)-f(q)|}{(r-q)^{1+\alpha}} d q d r\right),
\end{eqnarray*}
where $c_\alpha$ is same as in \eqref{alpha}.

Likewise, we define that 
\begin{eqnarray*}\label{3-7}
\mathcal{C}_3:=\int_0^t(t-s)^{-\alpha-1}\left|\int_s^t f(r) d u_r^\varepsilon\right| d s.
\end{eqnarray*}
Due to the fact that $\|u^\varepsilon\|_{1-\alpha,\infty} \le C \|u^\varepsilon\|_{1-\alpha+\kappa}\le C\|u^\varepsilon\|_{H}\le C \|u^\varepsilon\|_{\mathcal{H}^{H,d_1}}$, 
\begin{eqnarray*}\label{3-8}
\mathcal{C}_3 
&\le&\|u^\varepsilon\|_{\mathcal{H}^{H,d_1}}\left(c_\alpha \int_0^t(t-r)^{-2 \alpha}|f(r)| d r+\int_0^t \int_0^r(t-q)^{-\alpha} \frac{|f(r)-f(q)|}{(r-q)^{1+\alpha}} d q d r\right).
\end{eqnarray*}
Then,  we have
\begin{eqnarray}\label{3-9}
\|\tilde x^{(\varepsilon,\delta)}\|_{1, \lambda, t}&=& \sup _{0 \leq s \leq t} e^{-\lambda s} \int_0^s(s-r)^{-\alpha-1}\left|\int_r^s f_1( \tilde x_q^{(\varepsilon,\delta)}, \tilde y_q^{(\varepsilon,\delta)}) d q\right| d r \cr
&&+\sup _{0 \leq s \leq t} e^{-\lambda s} \int_0^s(s-r)^{-\alpha-1}\left|\int_r^s \sqrt{\varepsilon}\sigma_1( \tilde x_q^{(\varepsilon,\delta)}) d B_q^H\right| d r \cr
&&+\sup _{0 \leq s \leq t} e^{-\lambda s} \int_0^s(s-r)^{-\alpha-1}\left|\int_r^s \sigma_1( \tilde x_q^{(\varepsilon,\delta)}) d u_q^\varepsilon\right| d r \cr
&\le & C (\Lambda+\|u^\varepsilon\|_{\mathcal{H}^{H,d_1}})\left(1+\sup _{0 \leq s \leq t} \int_0^s e^{-\lambda(s-r)}\right.\cr
&&\left.\times\left[(s-r)^{-2 \alpha}\|\tilde x^{(\varepsilon,\delta)}\|_{\lambda, t}+(s-r)^{-\alpha}\|\tilde x^{(\varepsilon,\delta)}\|_{1, \lambda, t}\right] d r\right) \cr
&\le & C (\Lambda+\|u^\varepsilon\|_{\mathcal{H}^{H,d_1}})\left(1+\lambda^{2 \alpha-1}\|\tilde x^{(\varepsilon,\delta)}\|_{\lambda, t}+\lambda^{\alpha-1}\|\tilde x^{(\varepsilon,\delta)}\|_{1, \lambda, t}\right),
\end{eqnarray}
where the final inequality uses the estimate as follows,
\begin{eqnarray}\label{3-10}
\int_0^t e^{-\lambda(t-r)}(t-r)^{-2 \alpha} d r 
 \le \lambda^{2 \alpha-1} \int_0^{\infty} e^{-q} q^{-2 \alpha} d q 
 \le C \lambda^{2 \alpha-1} .
\end{eqnarray}
Take $\lambda=(4 C (\Lambda+\|u^\varepsilon\|_{\mathcal{H}^{H,d_1}}))^{\frac{1}{1-\alpha}}$, with the estimate (\ref{3-4}), it deduces that
\begin{eqnarray*}\label{3-11}
\|\tilde x^{(\varepsilon,\delta)}\|_{\lambda, t} \leq \frac{4}{3} C (\Lambda+\|u^\varepsilon\|_{\mathcal{H}^{H,d_1}})(1+\lambda^{-1}\|\tilde x^{(\varepsilon,\delta)}\|_{1, \lambda, t}),
\end{eqnarray*}
Substituting the above estimate into (\ref{3-9}), we can get that
\begin{eqnarray*}\label{3-12}
\|\tilde x^{(\varepsilon,\delta)}\|_{1, \lambda, t} &\le& \frac{3}{2} C (\Lambda+\|u^\varepsilon\|_{\mathcal{H}^{H,d_1}})+2(K (\Lambda+\|u^\varepsilon\|_{\mathcal{H}^{H,d_1}}))^{1 /(1-\alpha)} \cr
&\le& C (\Lambda+\|u^\varepsilon\|_{\mathcal{H}^{H,d_1}})^{1 /(1-\alpha)}.
\end{eqnarray*}
Moreover, we have
\begin{eqnarray*}\label{3-13}
\|\tilde x^{(\varepsilon,\delta)}\|_{ \lambda, t} 
&\le& C (\Lambda+\|u^\varepsilon\|_{\mathcal{H}^{H,d_1}})^{1 /(1-\alpha)}.
\end{eqnarray*}
Thus, it has
\begin{eqnarray*}\label{3-14}
\|\tilde x^{(\varepsilon,\delta)}\|_{\alpha, \infty} & \le& e^{\lambda T}\big(\|\tilde x^{(\varepsilon,\delta)}\|_{\lambda, T}+\|\tilde x^{(\varepsilon,\delta)}\|_{1, \lambda, T}\big) \cr
& \le& C \exp \big(C (\Lambda+\|u^\varepsilon\|_{\mathcal{H}^{H,d_1}})^{1 /(1-\alpha)}\big) (\Lambda+\|u^\varepsilon\|_{\mathcal{H}^{H,d_1}})^{1 /(1-\alpha)} ,
\end{eqnarray*}
according to the fact $0<\frac{1}{1-\alpha}<2$, assumptions $(u^{(\varepsilon,\delta)},v^{(\varepsilon,\delta)})\in \mathcal{A}_b^N$ and the  Fernique theorem for fBm, the conclusion is verified.
\qed
\begin{lem}\label{lem2}
	Assume (\textbf{A1})--(\textbf{A3}) and let $N\in(0,\infty)$. Then, there exists $C>0$ such that, for every $(u^{(\varepsilon,\delta)},v^{(\varepsilon,\delta)})\in \mathcal{A}_b^N$ and every $(t,h)$ with $0 \leq t \leq t+h \leq T$, we have 
	$$\mathbb{E}\big[|\tilde x_{t+h}^{(\varepsilon,\delta)}-\tilde x_{t}^{(\varepsilon,\delta)}|^2\big] \leq C h^{2-2 \alpha}.$$
	Here, $C$ is a positive constant which depends only on $N$.
\end{lem}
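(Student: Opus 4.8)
The plan is to work from the integral form of the controlled system (\ref{2}), splitting the increment over $[t,t+h]$ as
$$\tilde x_{t+h}^{(\varepsilon,\delta)}-\tilde x_{t}^{(\varepsilon,\delta)}
=\int_t^{t+h} f_1\big(\tilde x_r^{(\varepsilon,\delta)},\tilde y_r^{(\varepsilon,\delta)}\big)\,dr
+\int_t^{t+h}\sigma_1\big(\tilde x_r^{(\varepsilon,\delta)}\big)\,du_r^{(\varepsilon,\delta)}
+\sqrt\varepsilon\int_t^{t+h}\sigma_1\big(\tilde x_r^{(\varepsilon,\delta)}\big)\,dB_r^H
=:J_1+J_2+J_3,$$
and to prove $\mathbb{E}\big[|J_i|^2\big]\le C h^{2-2\alpha}$ for each $i$, concluding via $(a+b+c)^2\le 3(a^2+b^2+c^2)$. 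A convenient feature is that, by Assumption (\textbf{A3}), the fast component $\tilde y^{(\varepsilon,\delta)}$ enters only through the bounded function $f_1$, so no estimate on $\tilde y^{(\varepsilon,\delta)}$ is needed. For $J_1$ this gives $|J_1|\le\|f_1\|_\infty h$ pointwise, and since $h\le T$ we have $h^2\le T^{2\alpha}h^{2-2\alpha}$, hence $|J_1|^2\le C h^{2-2\alpha}$.

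For $J_2$ and $J_3$ the main tool is the estimate $\big|\int_0^\cdot f\,dg\big|\le\Lambda_\alpha(g)\|f\|_{\alpha,1}$ recalled in Section~\ref{sec-2-1}, applied on the subinterval $[t,t+h]$ in place of $[0,T]$ (legitimate since the underlying fractional calculus works on an arbitrary interval and the $\Lambda_\alpha$-constant over $[t,t+h]$ is controlled by that over $[0,T]$). This yields, for $g\in W_T^{1-\alpha,\infty}$,
$$\Big|\int_t^{t+h} f\,dg\Big|\le\Lambda_\alpha(g)\Big(\int_t^{t+h}\frac{|f(s)|}{(s-t)^\alpha}\,ds+\int_t^{t+h}\int_t^s\frac{|f(s)-f(r)|}{(s-r)^{\alpha+1}}\,dr\,ds\Big).$$
Taking $f=\sigma_1(\tilde x^{(\varepsilon,\delta)})$ and using that $\sigma_1$ is Lipschitz with $\sigma_1(0)$ finite (Assumption (\textbf{A1})), one bounds $\sup_s|\sigma_1(\tilde x_s^{(\varepsilon,\delta)})|\le C(1+\|\tilde x^{(\varepsilon,\delta)}\|_{\alpha,\infty})$ and, for $s\le t+h$, $\int_t^s\frac{|\sigma_1(\tilde x_s^{(\varepsilon,\delta)})-\sigma_1(\tilde x_r^{(\varepsilon,\delta)})|}{(s-r)^{\alpha+1}}\,dr\le L\int_0^s\frac{|\tilde x_s^{(\varepsilon,\delta)}-\tilde x_r^{(\varepsilon,\delta)}|}{(s-r)^{\alpha+1}}\,dr\le L\|\tilde x^{(\varepsilon,\delta)}\|_{\alpha,\infty}$. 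The first integral above then contributes $\int_t^{t+h}(s-t)^{-\alpha}\,ds=h^{1-\alpha}/(1-\alpha)$ and the second a factor $h\le T^\alpha h^{1-\alpha}$, so
$$\Big|\int_t^{t+h}\sigma_1\big(\tilde x_r^{(\varepsilon,\delta)}\big)\,dg_r\Big|\le C\,\Lambda_\alpha(g)\big(1+\|\tilde x^{(\varepsilon,\delta)}\|_{\alpha,\infty}\big)\,h^{1-\alpha},$$
with $C$ independent of $t$ and $h$. For $J_2$ we take $g=u^{(\varepsilon,\delta)}$ and use, as in the proof of \lemref{lem1}, that $\Lambda_\alpha(u^{(\varepsilon,\delta)})\le C\|u^{(\varepsilon,\delta)}\|_{\mathcal{H}^{H,d_1}}\le C\sqrt{2N}$ holds $\mathbb{P}$-a.s. since $(u^{(\varepsilon,\delta)},v^{(\varepsilon,\delta)})\in\mathcal{A}_b^N$; then \lemref{lem1} with $p=2$ gives $\mathbb{E}[|J_2|^2]\le C h^{2-2\alpha}$. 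For $J_3$ we take $g=B^H$ and use $\sqrt\varepsilon\le1$ to get $|J_3|\le C\,\Lambda_\alpha(B^H)\big(1+\|\tilde x^{(\varepsilon,\delta)}\|_{\alpha,\infty}\big)h^{1-\alpha}$; Cauchy--Schwarz together with \lemref{lem1} (with $p=4$) and the fact that $\Lambda_\alpha(B^H)$ has moments of all orders then yields $\mathbb{E}[|J_3|^2]\le C h^{2-2\alpha}$, completing the proof.

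The step requiring the most care is the localized fractional-integral estimate, that is, checking that $\big|\int_t^{t+h}\sigma_1(\tilde x_r^{(\varepsilon,\delta)})\,dg_r\big|$ is controlled by $(1+\|\tilde x^{(\varepsilon,\delta)}\|_{\alpha,\infty})\,h^{1-\alpha}$ with a constant uniform in $t$ and $h$. The point is that the quantity $|\sigma_1(\tilde x_s^{(\varepsilon,\delta)})|$ and the integral $\int_0^s\frac{|\sigma_1(\tilde x_s^{(\varepsilon,\delta)})-\sigma_1(\tilde x_r^{(\varepsilon,\delta)})|}{(s-r)^{\alpha+1}}\,dr$, which together make up $\|\sigma_1(\tilde x^{(\varepsilon,\delta)})\|_{\alpha,[0,s]}$, are already dominated by $\|\tilde x^{(\varepsilon,\delta)}\|_{\alpha,\infty}$ up to constants, so that no H\"older regularity of exponent larger than $\alpha$ has to be extracted by hand; everything else is a routine combination of \lemref{lem1} with the Fernique-type moment bound for $\Lambda_\alpha(B^H)$.
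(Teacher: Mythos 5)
Your proposal is correct and follows essentially the same route as the paper: the same three-term decomposition of the increment, the bound $|J_1|\le \|f_1\|_\infty h$ from (\textbf{A3}), and the localized estimate $\bigl|\int_t^{t+h}\sigma_1(\tilde x^{(\varepsilon,\delta)})\,dg\bigr|\le C\,\Lambda_\alpha(g)\bigl(1+\|\tilde x^{(\varepsilon,\delta)}\|_{\alpha,\infty}\bigr)h^{1-\alpha}$ applied with $g=u^{(\varepsilon,\delta)}$ (using $\|u^{(\varepsilon,\delta)}\|_{\mathcal{H}^{H,d_1}}\le\sqrt{2N}$) and $g=B^H$ (using Fernique moments), finished off by Lemma \ref{lem1}. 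The only difference is that you spell out the Cauchy--Schwarz step for $J_3$ more explicitly than the paper does.
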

\para{Proof}. In this Proof, $C$ is a positive constant which depends only on  $N$ which may change from line to line. By taking some direct computations, we see that
\begin{eqnarray}\label{3-15}
\mathbb{E}\left[\left|\tilde x_{t+h}^{(\varepsilon,\delta)}-\tilde x_t^{(\varepsilon,\delta)}\right|^2\right] & \le& \mathbb{E}\bigg[\big|\int_t^{t+h} f_1( \tilde x_r^{(\varepsilon,\delta)}, \tilde y_r^{(\varepsilon,\delta)}) d r\big|^2\bigg]+\mathbb{E}\bigg[\big|\int_t^{t+h} \sigma_1( \tilde x_r^{(\varepsilon,\delta)}) d u_r^\varepsilon\big|^2\bigg] \cr
&& +\mathbb{E}\bigg[\varepsilon\big|\int_t^{t+h} \sigma_1(\tilde x_r^{(\varepsilon,\delta)}) d B_r^H\big|^2\bigg]\cr
&=:& A_1^h+A_2^h+A_3^h.
\end{eqnarray}
 {Due to  Assumption (\textbf{A2}) and \lemref{lem1}}, we get 
$A_1^h\le Ch^2.$
Then, it proceeds to show that
\begin{eqnarray}\label{3-17}
\big|\int_s^{t} \sigma_1( \tilde x_r^{(\varepsilon,\delta)}) d u_r^\varepsilon\big|&\le & \|u^\varepsilon\|_{\mathcal{H}^{H,d_1}}\left(\int_s^t \frac{|\sigma_1(\tilde x_r^{(\varepsilon,\delta)})|}{(r-s)^\alpha} d r+\int_s^t \int_s^r \frac{|\sigma_1( \tilde x_r^{(\varepsilon,\delta)})-\sigma_1( \tilde x_q^{(\varepsilon,\delta)})|}{(r-q)^{1+\alpha}} d q d r\right) \cr
&\le & \|u^\varepsilon\|_{\mathcal{H}^{H,d_1}}\left(1+\|\tilde x^{(\varepsilon,\delta)}\|_{\alpha, \infty}\right) \times\left(\int_s^t(r-s)^{-\alpha} d r \right) \cr
&\le & C \|u^\varepsilon\|_{\mathcal{H}^{H,d_1}}\big(1+\|\tilde x^{(\varepsilon,\delta)}\|_{\alpha, \infty}\big)(t-s)^{1-\alpha}.
\end{eqnarray}
So, we have 
\begin{eqnarray}\label{3-18}
A_2^h \leq C \mathbb{E}\big[\|u^\varepsilon\|_{\mathcal{H}^{H,d_1}}^2(1+\|\tilde x^{(\varepsilon,\delta)}\|_{\alpha, \infty})^2\big] h^{2-2 \alpha}.
\end{eqnarray}
Likewise, we can get that
\begin{eqnarray}\label{3-19}
\big|\int_s^{t} \sigma_1( \tilde x_r^{(\varepsilon,\delta)}) d B_r^H\big|&\le & \Lambda\left(\int_s^t \frac{|\sigma_1(\tilde x_r^{(\varepsilon,\delta)})|}{(r-s)^\alpha} d r+\int_s^t \int_s^r \frac{|\sigma_1( \tilde x_r^{(\varepsilon,\delta)})-\sigma_1( \tilde x_q^{(\varepsilon,\delta)})|}{(r-q)^{1+\alpha}} d q d r\right) \cr
&\le & C \Lambda\big(1+\|\tilde x^{(\varepsilon,\delta)}\|_{\alpha, \infty}\big)(t-s)^{1-\alpha},
\end{eqnarray}
therefore, 
\begin{eqnarray}\label{3-20}
A_3^h \leq  C \mathbb{E}\big[\Lambda^2(1+\|\tilde x^{(\varepsilon,\delta)}\|_{\alpha, \infty})^2\big] h^{2-2 \alpha}.
\end{eqnarray}
If we combine (\ref{3-15})--(\ref{3-20}) and \lemref{lem1}, then we can obtain the desired estimate.\qed

Before giving estimates of  $\tilde  y^{(\varepsilon,\delta)}$, we need to show the following estimate.
\begin{lem}\label{lem4}
	Assume (\textbf{A1})--(\textbf{A3}). Let $N\in(0,\infty)$. Then,
	 for every $(u^{(\varepsilon,\delta)},v^{(\varepsilon,\delta)})\in \mathcal{A}_b^N$, 
	$\sup_{0\le s\le t}|\tilde  y_s^{(\varepsilon,\delta)}|$ has moments of all orders.
\end{lem}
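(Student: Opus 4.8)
The plan is to apply It\^o's formula to $|\tilde y^{(\varepsilon,\delta)}_t|^{2p}$ for an arbitrary integer $p\ge 1$ — non-integer moments then follow from Jensen's inequality — and to close a linear Gronwall estimate. Since $\varepsilon$ and $\delta$ are \emph{fixed} here and we only claim finiteness (the uniform-in-$(\varepsilon,\delta)$ estimates come later from the dissipativity (\textbf{A4}), which we do not use in this lemma), the constants $\delta^{-1}$ and $(\delta\varepsilon)^{-1/2}$ appearing in the equation for $\tilde y^{(\varepsilon,\delta)}$ are harmless. Writing $v^{(\varepsilon,\delta)}=\int_0^\cdot v'_s\,ds$ with $\tfrac12\int_0^T|v'_s|^2\,ds\le N$ $\mathbb{P}$-a.s., the second line of (\ref{2}) reads in integral form
\[
\tilde y^{(\varepsilon,\delta)}_t = y_0 + \frac1\delta\int_0^t f_2(\tilde x^{(\varepsilon,\delta)}_s,\tilde y^{(\varepsilon,\delta)}_s)\,ds + \frac1{\sqrt{\delta\varepsilon}}\int_0^t \sigma_2(\tilde x^{(\varepsilon,\delta)}_s,\tilde y^{(\varepsilon,\delta)}_s)\,v'_s\,ds + \frac1{\sqrt\delta}\int_0^t \sigma_2(\tilde x^{(\varepsilon,\delta)}_s,\tilde y^{(\varepsilon,\delta)}_s)\,dW_s.
\]
From (\textbf{A2}) I would use the linear growth $|f_2(x,y)|\le L'(1+|x|+|y|)$ and the $y$-boundedness $\sup_{y}|\sigma_2(x,y)|\le L(1+|x|)$; together with \lemref{lem1}, which gives $\mathbb{E}[\sup_{0\le s\le T}(1+|\tilde x^{(\varepsilon,\delta)}_s|)^q]<\infty$ for every $q\ge 1$, all coefficients below that depend on the slow component will have finite moments of every order.

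To keep all quantities integrable during the computation I would localise with the stopping times $\tau_R:=T\wedge\inf\{t\ge 0:|\tilde y^{(\varepsilon,\delta)}_t|\ge R\}$ and work on $[0,t\wedge\tau_R]$. It\^o's formula applied to $|\tilde y^{(\varepsilon,\delta)}_{t\wedge\tau_R}|^{2p}$ produces four contributions: a drift term controlled, via linear growth of $f_2$, by $C\int_0^{t\wedge\tau_R}\big((1+|\tilde x_s|)|\tilde y_s|^{2p-1}+|\tilde y_s|^{2p}\big)ds$; an It\^o-correction term and a quadratic-variation term from the $dW$-integral, both controlled by $C\int_0^{t\wedge\tau_R}(1+|\tilde x_s|)^2|\tilde y_s|^{2p-2}ds$ thanks to the $y$-boundedness of $\sigma_2$; the control term, which after Cauchy--Schwarz in time is $\le C(\delta\varepsilon)^{-1/2}\sqrt{2N}\big(\int_0^{t\wedge\tau_R}|\tilde y_s|^{4p-2}|\sigma_2(\tilde x_s,\tilde y_s)|^2\,ds\big)^{1/2}$ and which I would then bound using $|\sigma_2|^2\le L^2(1+|\tilde x_s|)^2$, factoring out $(\sup_{s\le t\wedge\tau_R}|\tilde y_s|^{2p})^{1/2}$ and splitting by Young's inequality into $\eta\sup_{s\le t\wedge\tau_R}|\tilde y_s|^{2p}+C_\eta\int_0^{t\wedge\tau_R}(1+|\tilde x_s|)^2|\tilde y_s|^{2p-2}ds$; and a martingale term which is a true martingale after stopping and whose bracket is $\le C\delta^{-1}\int_0^{t\wedge\tau_R}(1+|\tilde x_s|)^2|\tilde y_s|^{4p-2}\,ds$, so that the Burkholder--Davis--Gundy inequality followed by the same Young argument contributes $\eta\,\mathbb{E}[\sup_{s\le t\wedge\tau_R}|\tilde y_s|^{2p}]$ plus a term with finite moments. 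Taking $\sup_{0\le s\le t}$ and then $\mathbb{E}$, choosing $\eta$ small to absorb the $\sup$-terms (legitimate because they are a priori $\le R^{2p}<\infty$), and estimating the remaining products $(1+|\tilde x_s|)^k|\tilde y_s|^{2p-j}$ by Young's inequality and \lemref{lem1}, one reaches
\[
m_R(t):=\mathbb{E}\Big[\sup_{0\le s\le t\wedge\tau_R}|\tilde y^{(\varepsilon,\delta)}_s|^{2p}\Big]\le C_1+C_2\int_0^t m_R(s)\,ds,
\]
with $C_1,C_2$ depending only on $\varepsilon,\delta,p,N$ (through the finite $\tilde x$-moments) and not on $R$, and with $m_R(t)\le R^{2p}<\infty$. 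Gronwall's lemma then gives $m_R(t)\le C_1e^{C_2T}$ uniformly in $R$, and Fatou's lemma as $R\to\infty$ yields $\mathbb{E}[\sup_{0\le s\le t}|\tilde y^{(\varepsilon,\delta)}_s|^{2p}]<\infty$; Jensen's inequality upgrades this to moments of every order.

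The step I expect to need the most care is the control term $(\delta\varepsilon)^{-1/2}\int_0^\cdot\sigma_2(\tilde x_s,\tilde y_s)v'_s\,ds$: the density $v'$ is only square-integrable in time and is random, so a direct Gronwall argument would stall. The remedy is the Cauchy--Schwarz-in-time manoeuvre above, which trades the random $L^2$-in-time coefficient for the deterministic constant $\sqrt{2N}$ at the cost of doubling the power of $|\tilde y^{(\varepsilon,\delta)}|$, the latter then reabsorbed by Young's inequality; the $dW$-martingale term is handled by an entirely analogous estimate. Everything else is routine once \lemref{lem1} supplies the moment bounds on the slow component.
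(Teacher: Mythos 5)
Your proof is correct, but it is mechanically heavier than the one in the paper, and it is worth seeing why. You apply It\^o's formula to $|\tilde y^{(\varepsilon,\delta)}_t|^{2p}$, localise, and then must absorb $\sup$-terms via Young's inequality because the It\^o integrands carry high powers of $|\tilde y^{(\varepsilon,\delta)}|$. The paper instead never invokes It\^o's formula: it estimates $Y_t^{*}=\sup_{0\le s\le t}|\tilde y^{(\varepsilon,\delta)}_s|$ directly from the integral form of the equation, splitting into the same three contributions (drift, control, $dW$-integral). The decisive simplification is that the uniform-in-$y$ bound $\sup_{y}|\sigma_2(x,y)|\le L(1+|x|)$ from (\textbf{A2}) makes the control term (after the same Cauchy--Schwarz-in-time manoeuvre you use, trading the random $L^2$ density for $\sqrt{2N}$) and the stochastic integral term (after Burkholder--Davis--Gundy applied at the level of $|\cdot|^p$ rather than to the It\^o martingale of $|\cdot|^{2p}$) depend \emph{only} on the slow component, hence contribute only constants after \lemref{lem1}; the sole $Y$-dependence enters through the linear growth of $f_2$ in the drift, giving a clean linear Gronwall inequality for $\mathbb{E}[|Y_t^{*}|^p]$ with no absorption step. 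Your route buys one thing the paper's argument glosses over: the localisation with $\tau_R$ and the passage to the limit by Fatou supply the a priori finiteness of the quantity fed into Gronwall's lemma, which the paper applies without comment. Both arguments rest on the same ingredients (linear growth of $f_2$, $y$-uniform boundedness of $\sigma_2$, the a.s.\ bound $\frac12\int_0^T|v'_s|^2ds\le N$, BDG, \lemref{lem1}, Gronwall), and both correctly treat $\varepsilon,\delta$ as fixed so that the singular prefactors are harmless.
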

\para{Proof}. In this proof,   $0<\delta<\varepsilon\le 1$ are fixed and $C$ is a positive constant which depends  on  $N$, $p$, $\varepsilon$ and $\delta$ which may change from line to line. We define $(X_t,Y_t):=(\tilde  x_t^{(\varepsilon,\delta)},\tilde  y_t^{(\varepsilon,\delta)})$, $X_t^{*}:=\sup_{0\le s\le t}|\tilde  x_s^{(\varepsilon,\delta)}|$ and $Y_t^{*}:=\sup_{0\le s\le t}|\tilde  y_s^{(\varepsilon,\delta)}|$.   

For every $1\le p<\infty$, we have
\begin{eqnarray}\label{4-1}
\begin{aligned}
\mathbb{E}[|Y_t^{*}|^p] &\le C {| y_0 |^p} + C\mathbb{E}\big[|\sup_{0 \leq s \leq t}|\int_0^s f_2( X_r,Y_r )dr||^p \big]
+  C\mathbb{E}\big[|\sup_{0 \leq s \leq t}|\int_0^s  \sigma_{2} ( X_r,Y_r ){\frac{dv_r^{(\varepsilon, \delta)}}{dr}  } dr ||^p\big] \\
&\quad+ C\mathbb{E}\big[\big|\sup_{0 \leq s \leq t}\big|\int_0^s \sigma_{2}( X_r,Y_r) dW_r\big|\big|^p\big] \\
&:=C {| y_0 |^p} + K_1+K_2+K_3.
\end{aligned}
\end{eqnarray}
By Assumption (\textbf{A2}), we have 
\begin{eqnarray}\label{4-2}
\begin{aligned}
K_1 &\le  C+C[\sup_{0 \leq s \leq t}|X_s|]^p+C\int_{0}^{t}[\sup_{0 \leq r \leq s}|Y_r|]^pds .
\end{aligned}
\end{eqnarray}
Likewise, by Assumption (\textbf{A2}) and H\"older inequality, we obtain
\begin{eqnarray}\label{4-3}
\begin{aligned}
K_2 &\le  C\mathbb{E}\big[[\sup_{0 \leq s \leq t}\int_0^s  |\sigma_{2} ( X_r,Y_r)|^2dr]^{p/2} \times[\int_0^s|{\frac{dv_r^{(\varepsilon, \delta)}}{dr}  }|^2 dr ]^{p/2}\big] \\
&\le C\big\{\mathbb{E}\big[\sup_{0 \leq s \leq t}\int_0^s  |\sigma_{2} ( X_r,Y_r)|^2dr\big]^{p}\big\}^{\frac{1}{2}} \times C\big\{\mathbb{E}\big[\sup_{0 \leq s \leq t}\int_0^s|{\frac{dv_r^{(\varepsilon, \delta)}}{dr}  }|^2 dr \big]^{p}\big\}^{\frac{1}{2}} \\
&\le C\big\{\mathbb{E}[|1+[\sup_{0 \leq s \leq t}|X_s|]^p|]\big\}^{\frac{1}{2}}   \\
&\le C\big\{\mathbb{E}[1+[X_t^{*}]^p]\big\}^{\frac{1}{2}}.
\end{aligned}
\end{eqnarray}
For the final term, with aid of  Burkholder-Davis-Gundy inequality, we have
\begin{eqnarray}\label{4-4}
\begin{aligned}
K_3 &\le  C\mathbb{E}\big[\big|\sup_{0 \leq s \leq t}\big|\int_0^s\sigma_{2}( X_r,Y_r) dW_r\big|\big|^p\big] \\
&\le C\mathbb{E}\big[\big|\int_0^t |\sigma_{2}( X_r,Y_r)|^2 dr\big|^{p/2}\big] \\
&\le C\mathbb{E}[|1+[\sup_{0 \leq s \leq t}|X_s|]^p|]  \\
&\le C+C\mathbb{E}[[X_t^{*}]^p].
\end{aligned}
\end{eqnarray}
By combining (\ref{4-1}) to (\ref{4-4}) and \lemref{lem1}, we have
\begin{eqnarray}\label{4-5}
\begin{aligned}
\mathbb{E}[|Y_t^{*}|^p] &\le  C\int_{0}^{t}\mathbb{E}[|Y_s^{*}|^p]ds+
C,\quad t\in[0,T].
\end{aligned}
\end{eqnarray}
Then, by the Gronwall inequality, we have
\begin{eqnarray*}\label{4-6}
\begin{aligned}
		\mathbb{E}[|Y_t^{*}|^p] &\le C e^{C	t},\quad t\in[0,T].
\end{aligned}
\end{eqnarray*}
The proof is completed. \qed
\begin{lem}\label{lem3}
	Assume (\textbf{A1})--(\textbf{A3}). We additionally assume $\frac{\delta}{\varepsilon}\le (\frac{\beta_2}{2})^2$. Let $N\in(0,\infty)$. Then, there exists $C>0$ such that for every $(u^{(\varepsilon,\delta)},v^{(\varepsilon,\delta)})\in \mathcal{A}_b^N$, we have
	$$\int_{0}^{T} \mathbb{E}\big[|\tilde  y_t^{(\varepsilon,\delta)}|^2\big] dt\leq C.$$
	Here, $C$ is a positive constant which depends only  on $N$.
\end{lem}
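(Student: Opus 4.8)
The plan is to apply It\^o's formula to $|\tilde y^{(\varepsilon,\delta)}_t|^2$, use the strong dissipativity in Assumption (\textbf{A4}) to create a coercive term of order $\delta^{-1}$, and then integrate in time: the $\delta$ gained from dividing by the coercivity coefficient cancels the various $\delta^{-1}$'s, while the standing hypothesis $\delta/\varepsilon\le(\beta_2/2)^2$ is exactly what absorbs the $\varepsilon^{-1}$ produced by the control cross-term. First I would rewrite the fast equation in (\ref{2}) as
\[
d\tilde y^{(\varepsilon,\delta)}_t = \tfrac1\delta f_2\,dt + \tfrac{1}{\sqrt{\delta\varepsilon}}\sigma_2 (v^{(\varepsilon,\delta)}_t)'\,dt + \tfrac{1}{\sqrt\delta}\sigma_2\,dW_t,
\]
with arguments $(\tilde x^{(\varepsilon,\delta)}_t,\tilde y^{(\varepsilon,\delta)}_t)$ suppressed, where $(v^{(\varepsilon,\delta)})'$ denotes the $L^2$-density of the control, satisfying $\int_0^T|(v^{(\varepsilon,\delta)}_t)'|^2dt\le 2N$ $\mathbb{P}$-a.s.\ since $(u^{(\varepsilon,\delta)},v^{(\varepsilon,\delta)})\in\mathcal{A}_b^N$. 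It\^o's formula then gives
\[
d|\tilde y^{(\varepsilon,\delta)}_t|^2 = \tfrac1\delta\big(2\langle\tilde y^{(\varepsilon,\delta)}_t,f_2\rangle+|\sigma_2|^2\big)\,dt + \tfrac{2}{\sqrt{\delta\varepsilon}}\langle\tilde y^{(\varepsilon,\delta)}_t,\sigma_2(v^{(\varepsilon,\delta)}_t)'\rangle\,dt + \tfrac{2}{\sqrt\delta}\langle\tilde y^{(\varepsilon,\delta)}_t,\sigma_2\,dW_t\rangle,
\]
the only quadratic-variation contribution being the $\delta^{-1}|\sigma_2|^2\,dt$ from the Brownian term, since the drift $\int_0^\cdot(v^{(\varepsilon,\delta)}_s)'ds$ has finite variation.

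Next I would estimate the drift. The second inequality in (\textbf{A4}) bounds the first bracket by $-\beta_2|\tilde y^{(\varepsilon,\delta)}_t|^2 + C|\tilde x^{(\varepsilon,\delta)}_t|^2 + C$. For the control cross-term, using $\sup_y|\sigma_2(x,y)|\le L(1+|x|)$ from (\textbf{A2}) together with Young's inequality $2ab\le\tfrac{\beta_2}{2}a^2+\tfrac{2}{\beta_2}b^2$ with $a=\delta^{-1/2}|\tilde y^{(\varepsilon,\delta)}_t|$ and $b=\varepsilon^{-1/2}L(1+|\tilde x^{(\varepsilon,\delta)}_t|)|(v^{(\varepsilon,\delta)}_t)'|$, I reserve exactly half of the coercive term and pick up a remainder of the form $\tfrac{C}{\varepsilon}(1+|\tilde x^{(\varepsilon,\delta)}_t|)^2|(v^{(\varepsilon,\delta)}_t)'|^2$. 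Taking expectations and integrating over $[0,T]$---the $dW$-martingale has zero mean because, for the fixed $\varepsilon,\delta$, its integrand lies in $L^2(\Omega\times[0,T])$ by (\textbf{A2}), \lemref{lem1} and \lemref{lem4} (which is precisely why \lemref{lem4} was established first)---yields
\[
\mathbb{E}\big[|\tilde y^{(\varepsilon,\delta)}_T|^2\big] + \tfrac{\beta_2}{2\delta}\mathbb{E}\!\int_0^T|\tilde y^{(\varepsilon,\delta)}_t|^2\,dt \le |y_0|^2 + \tfrac{C}{\delta}\mathbb{E}\!\int_0^T\!\!\big(1+|\tilde x^{(\varepsilon,\delta)}_t|^2\big)dt + \tfrac{C}{\varepsilon}\mathbb{E}\!\int_0^T\!\!\big(1+|\tilde x^{(\varepsilon,\delta)}_t|\big)^2|(v^{(\varepsilon,\delta)}_t)'|^2dt.
\]

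To conclude, I drop the nonnegative term $\mathbb{E}[|\tilde y^{(\varepsilon,\delta)}_T|^2]$ and multiply through by $2\delta/\beta_2$. The resulting prefactor $\delta$ makes the $|y_0|^2$-term $\le\tfrac{2}{\beta_2}|y_0|^2$ (as $\delta\le1$); it cancels the $\delta^{-1}$ in the $\tilde x$-term, leaving $\tfrac{2CT}{\beta_2}\big(1+\mathbb{E}\|\tilde x^{(\varepsilon,\delta)}\|_{\alpha,\infty}^2\big)$, bounded by a constant depending only on $N$ via \lemref{lem1}; and it turns the last term into
\[
\tfrac{2C\delta}{\beta_2\varepsilon}\,\mathbb{E}\Big[(1+\|\tilde x^{(\varepsilon,\delta)}\|_{\alpha,\infty})^2\!\int_0^T|(v^{(\varepsilon,\delta)}_t)'|^2dt\Big] \le \tfrac{2C\delta}{\beta_2\varepsilon}\cdot 2N\cdot\mathbb{E}\big(1+\|\tilde x^{(\varepsilon,\delta)}\|_{\alpha,\infty}\big)^2,
\]
which is bounded by a constant depending only on $N$ because $\delta/\varepsilon\le(\beta_2/2)^2$ and, again, \lemref{lem1}. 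Summing the three bounds gives the claim. The only real difficulty is the one the lemma is designed around: controlling the singular coefficients $\delta^{-1}$ and $(\delta\varepsilon)^{-1/2}$ uniformly in $(\varepsilon,\delta)$. The argument closes precisely because (\textbf{A4}) furnishes a coercive term of the matching order $\delta^{-1}$, exactly half of which is spent on the control cross-term via a carefully weighted Young inequality, the leftover half being integrated against the a.s.\ budget $\int_0^T|(v^{(\varepsilon,\delta)}_t)'|^2dt\le2N$ so that the surviving $\varepsilon^{-1}$ combines with the gained $\delta$ into $\delta/\varepsilon\le(\beta_2/2)^2$. A minor technical point is the justification that the $dW$-martingale has zero expectation; if one prefers not to cite \lemref{lem4}, one can instead localize by $\tau_n=\inf\{t:|\tilde y^{(\varepsilon,\delta)}_t|\ge n\}$ and pass to the limit by monotone convergence.
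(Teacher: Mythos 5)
Your proof is correct and follows essentially the same route as the paper's: It\^o's formula for $|\tilde y^{(\varepsilon,\delta)}_t|^2$, the dissipativity in (\textbf{A4}), Young's inequality on the control cross-term, zero-mean of the $dW$-martingale via \lemref{lem1} and \lemref{lem4}, and the hypothesis $\delta/\varepsilon\le(\beta_2/2)^2$ to absorb the $\varepsilon^{-1}$. The only differences are minor and in your favour: you use a weighted Young inequality so the coercivity coefficient $-\beta_2/(2\delta)$ is secured without yet invoking the bound on $\delta/\varepsilon$ (which you then use only at the very end), and you integrate the resulting differential inequality directly over $[0,T]$ and divide by $\beta_2/(2\delta)$, whereas the paper takes a longer detour through a comparison theorem with an explicit exponential solution of a linear ODE followed by a Fubini computation.
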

\para{Proof}. By the It\^o's formula, we have
\begin{eqnarray*}
\begin{aligned}
\mathbb{E}[{| \tilde {y}_{t}^{(\varepsilon,\delta)} |^2}] &= {| y_0 |^2} + \frac{2}{\delta }\mathbb{E}\int_0^t {\langle \tilde {y}_{s}^{(\varepsilon,\delta)},f_2( \tilde {x}_{s}^{(\varepsilon,\delta)},\tilde {y}_{s}^{(\varepsilon,\delta)} )\rangle ds} 
+ \frac{2}{{\sqrt {\delta \varepsilon } }}\mathbb{E}\int_0^t {\langle \tilde {y}_{s}^{(\varepsilon,\delta)},\sigma_{2} ( {\tilde {x}_{s}^{(\varepsilon,\delta)},\tilde {y}_{s}^{(\varepsilon,\delta)}} ){\frac{dv_s^{(\varepsilon, \delta)}}{ds}  }\rangle ds}   \\
&\quad+\frac{2}{\sqrt{\delta} }\mathbb{E}\int_0^t {\langle \tilde {y}_{s}^{(\varepsilon,\delta)},\sigma_{2}( {\tilde {x}_{s}^{(\varepsilon,\delta)},\tilde {y}_{s}^{(\varepsilon,\delta)}} ) dW_s\rangle}+ \frac{1}{\delta }\mathbb{E}\int_0^t |{\sigma_{2} }( {\tilde {x}_{s}^{(\varepsilon,\delta)},\tilde {y}_{s}^{(\varepsilon,\delta)}} )|^2ds,
\end{aligned}
\end{eqnarray*}
where $|\sigma_2 (x,y)|$ is the Hilbert-Schmidt norm of the matrix $\sigma_2 (x,y)$.

According to \lemref{lem1} and \lemref{lem4}, we can obtain that the fourth term is a true martingale. In particular, we have $\mathbb{E}[\int_0^t {\langle \tilde {y}_{s}^{(\varepsilon,\delta)},\sigma_{2}( {\tilde {x}_{s}^{(\varepsilon,\delta)},\tilde {y}_{s}^{(\varepsilon,\delta)}} ) dW_s\rangle}]=0$.
So, we have
\begin{eqnarray}\label{3-21}
\begin{aligned}
\frac{d\mathbb{E}[{| \tilde {y}_{t}^{(\varepsilon,\delta)} |^2}]}{dt} &= \frac{2}{\delta }\mathbb{E} {\langle \tilde {y}_{t}^{(\varepsilon,\delta)},f_2( \tilde {x}_{t}^{(\varepsilon,\delta)},\tilde {y}_{t}^{(\varepsilon,\delta)} )\rangle } 
+ \frac{2}{{\sqrt {\delta \varepsilon } }}\mathbb{E} {\langle \tilde {y}_{t}^{(\varepsilon,\delta)},\sigma_{2} ( {\tilde {x}_{t}^{(\varepsilon,\delta)},\tilde {y}_{t}^{(\varepsilon,\delta)}} ){\frac{dv_t^{(\varepsilon, \delta)}}{dt}  }\rangle }   \\
&\quad+ \frac{1}{\delta }\mathbb{E} |{\sigma_{2} }( {\tilde {x}_{t}^{(\varepsilon,\delta)},\tilde {y}_{t}^{(\varepsilon,\delta)}} )|^2,
\end{aligned}
\end{eqnarray} 
With Assumption (\textbf{A3}), we have
\begin{eqnarray}\label{lemma3.12}
\frac{2}{\delta }{\langle \tilde {y}_{t}^{(\varepsilon,\delta)},f_2( \tilde {x}_{t}^{(\varepsilon,\delta)},\tilde {y}_{t}^{(\varepsilon,\delta)} )\rangle } +\frac{1}{\delta } |{\sigma_{2} }( {\tilde {x}_{t}^{(\varepsilon,\delta)},\tilde {y}_{t}^{(\varepsilon,\delta)}} )|^2\le- \frac{{ \beta_2 }}{\delta }{{| \tilde {y}_{t}^{(\varepsilon,\delta)} |^2}}  + {\frac{{ C }}{\delta }{{| \tilde {x}_{t}^{(\varepsilon,\delta)} |^2}} }+ \frac{{C}}{\delta }.	
\end{eqnarray}
By  Assumption (\textbf{A2}) and  $(u^{(\varepsilon, \delta)}, v^{(\varepsilon, \delta)})\in \mathcal{A}_{b}^N$, then, we have
\begin{eqnarray}\label{3-22}
\begin{aligned}
\frac{2}{{\sqrt {\delta \varepsilon } }}{\langle \tilde {y}_{t}^{(\varepsilon,\delta)},\sigma_{2} ( {\tilde {x}_{t}^{(\varepsilon,\delta)},\tilde {y}_{t}^{(\varepsilon,\delta)}} ){\frac{dv_t^{(\varepsilon, \delta)}}{dt}  }\rangle }\le   \frac{{{L}}}{{\sqrt {\delta \varepsilon } }}\big( 1 + | \tilde {x}_{t}^{(\varepsilon,\delta)} |^2 \big)| {\frac{dv_t^{(\varepsilon, \delta)}}{dt}  } |^2+   \frac{{{1 }}}{{\sqrt {\delta \varepsilon } }} | \tilde {y}_{t}^{(\varepsilon,\delta)} |^2 .
\end{aligned}
\end{eqnarray}
Thus, from (\ref{3-21}) and (\ref{3-22}), we have
\begin{eqnarray*}\label{3-23}
\begin{aligned}
\frac{d\mathbb{E}[{| \tilde {y}_{t}^{(\varepsilon,\delta)} |^2}]}{dt} &\le  ({\frac{{ - \beta_2 }}{\delta } }+\frac{{{1 }}}{{\sqrt {\delta \varepsilon } }}) \mathbb{E}[{| \tilde {y}_{t}^{(\varepsilon,\delta)} |^2}]  +   \frac{{{L }}}{{\sqrt {\delta \varepsilon } }}  \mathbb{E}[{{| \tilde {x}_{t}^{(\varepsilon,\delta)} |^2{| {\frac{dv_t^{(\varepsilon, \delta)}}{dt}  } |^2}}}]  +   \frac{{{L }}}{\sqrt {\delta \varepsilon } } \mathbb{E}[{| {\frac{dv_t^{(\varepsilon, \delta)}}{dt}  } |^2}] + {\frac{{ C }}{\delta }{{\mathbb{E}[| \tilde {x}_{t}^{(\varepsilon,\delta)} |^2]}} }+ \frac{{ C}}{\delta }\\
&\le  {\frac{{ - \beta_2 }}{2\delta } } \mathbb{E}[{| \tilde {y}_{t}^{(\varepsilon,\delta)} |^2}]  +   \frac{{{L }}}{{\sqrt {\delta \varepsilon } }}  \mathbb{E}[{{| \tilde {x}_{t}^{(\varepsilon,\delta)} |^2{| {\frac{dv_t^{(\varepsilon, \delta)}}{dt}  } |^2}}}]  +   \frac{{{L }}}{\sqrt {\delta \varepsilon } } \mathbb{E}[{| {\frac{dv_t^{(\varepsilon, \delta)}}{dt}  } |^2}] + {\frac{{ C }}{\delta }{{\mathbb{E}[| \tilde {x}_{t}^{(\varepsilon,\delta)} |^2]}} }+ \frac{{ C}}{\delta }.
\end{aligned}
\end{eqnarray*}
Moreover, consider the ODE  as following:
\begin{eqnarray*}\label{3-233}
\begin{aligned}
\frac{d {A}_{t} }{dt} 
&=  {\frac{{ - \beta_2 }}{2\delta } } {{A}_{t}  } +   \frac{{{L }}}{{\sqrt {\delta \varepsilon } }}  \mathbb{E}[{{| \tilde {x}_{t}^{(\varepsilon,\delta)} |^2{| {\frac{dv_t^{(\varepsilon, \delta)}}{dt}  } |^2}}}]  +   \frac{{{L }}}{\sqrt {\delta \varepsilon } } \mathbb{E}[{| {\frac{dv_t^{(\varepsilon, \delta)}}{dt}  } |^2}] + {\frac{{ C }}{\delta }{{\mathbb{E}[| \tilde {x}_{t}^{(\varepsilon,\delta)} |^2]}} }+ \frac{{ C}}{\delta }
\end{aligned}
\end{eqnarray*}
with initial value $ A_0=|y_0|^2$. So the solution has an explicit  expression  as follows:
\begin{eqnarray*}\label{3-234}
\begin{aligned}
{A_t}
&=  |y_0|^2 e^{-\frac{\beta_2}{2\delta} t} +   \frac{{{L }}}{{\sqrt {\delta \varepsilon } }}\int_{0}^{t}  e^{-\frac{\beta_2}{2\delta} (t-s)}{\mathbb{E}[{| \tilde {x}_{s}^{(\varepsilon,\delta)} |^2{| {\frac{dv_s^{(\varepsilon, \delta)}}{ds}  } |^2}}]} ds+   \frac{{{L }}}{\sqrt {\delta \varepsilon } }\int_{0}^{t}  e^{-\frac{\beta_2}{2\delta} (t-s)} { \mathbb{E}[|{\frac{dv_s^{(\varepsilon, \delta)}}{ds}  } |^2]}ds\\
&\quad+ {\frac{{ C }}{\delta }\int_{0}^{t}  e^{-\frac{\beta_2}{2\delta} (t-s)}{{\mathbb{E}[| \tilde {x}_{s}^{(\varepsilon,\delta)} |^2]}} ds} + \frac{{ C}}{\delta }\int_{0}^{t}  e^{-\frac{\beta_2}{2\delta} (t-s)}ds.
\end{aligned}
\end{eqnarray*}
Besides, by some simple computation, we can obtain that 
\begin{eqnarray*}\label{3-235}
\begin{aligned}
\frac{d(\mathbb{E}[{| \tilde {y}_{t}^{(\varepsilon,\delta)} |^2}]-{A_t})}{dt} 
&\le{\frac{{ - \beta_2 }}{2\delta } } \big[\mathbb{E}[{| \tilde {y}_{t}^{(\varepsilon,\delta)} |^2}]-{A_t}]\big]. 
\end{aligned}
\end{eqnarray*}
So, by comparison theorem, we have for all $t$ that
\begin{eqnarray*}\label{3-231}
\begin{aligned}
\mathbb{E}[{| \tilde {y}_{t}^{(\varepsilon,\delta)} |^2}] \le A_t
&= |y_0|^2 e^{-\frac{\beta_2}{2\delta} t} +   \frac{{{L }}}{{\sqrt {\delta \varepsilon } }}\int_{0}^{t}  e^{-\frac{\beta_2}{2\delta} (t-s)}{\mathbb{E}[{| \tilde {x}_{s}^{(\varepsilon,\delta)} |^2{| {\frac{dv_s^{(\varepsilon, \delta)}}{ds}  } |^2}}]} ds+   \frac{{{L }}}{\sqrt {\delta \varepsilon } }\int_{0}^{t}  e^{-\frac{\beta_2}{2\delta} (t-s)} { \mathbb{E}[|{\frac{dv_s^{(\varepsilon, \delta)}}{ds}  } |^2]}ds  \\
&\qquad + {\frac{{ C }}{\delta }\int_{0}^{t}  e^{-\frac{\beta_2}{2\delta} (t-s)}{{\mathbb{E}[| \tilde {x}_{s}^{(\varepsilon,\delta)} |^2]}} ds}+ \frac{{ C}}{\delta }\int_{0}^{t}  e^{-\frac{\beta_2}{2\delta} (t-s)}ds.
\end{aligned}
\end{eqnarray*}
Then, with Fubini theorem, we can obtain
\begin{eqnarray*}\label{3-232}
\begin{aligned}
\int_{0}^{T}\mathbb{E}[{| \tilde {y}_{t}^{(\varepsilon,\delta)} |^2}]dt 
&\le |y_0|^2 \int_{0}^{T} e^{-\frac{\beta_2}{2\delta} t} dt+   \frac{{{L }}}{{\sqrt {\delta \varepsilon } }}\int_{0}^{T}\int_{0}^{t}  e^{-\frac{\beta_2}{2\delta} (t-s)}{\mathbb{E}[{| \tilde {x}_{s}^{(\varepsilon,\delta)} |^2{| {\frac{dv_s^{(\varepsilon, \delta)}}{ds}  } |^2}}]} dsdt  \\
&\quad+   \frac{{{L }}}{\sqrt {\delta \varepsilon } }\int_{0}^{T}\int_{0}^{t}  e^{-\frac{\beta_2}{2\delta} (t-s)} { \mathbb{E}[|{\frac{dv_s^{(\varepsilon, \delta)}}{ds}  } |^2]}ds + {\frac{{ C }}{\delta }\int_{0}^{T}\int_{0}^{t}  e^{-\frac{\beta_2}{2\delta} (t-s)}{{\mathbb{E}[| \tilde {x}_{s}^{(\varepsilon,\delta)} |^2]}} dsdt}\\
&\quad+\frac{{ C}}{\delta }\int_{0}^{T}\int_{0}^{t}  e^{-\frac{\beta_2}{2\delta} (t-s)}dsdt\\
&\le |y_0|^2 e^{-\frac{\beta_2}{2\delta} T} +   \frac{{{L  }}}{{\sqrt {\delta \varepsilon } }}\mathbb{E}\big[\sup_{0 \leq t \leq T}| \tilde {x}_{t}^{(\varepsilon,\delta)} |^2\int_{0}^{T}\int_{s}^{T}  e^{-\frac{\beta_2}{2\delta} (t-s)} dt| {\frac{dv_s^{(\varepsilon, \delta)}}{ds}  } |^2ds\big]  \\
&\quad+   \frac{{{L }}}{\sqrt {\delta \varepsilon } }\int_{0}^{T}\int_{s}^{T}  e^{-\frac{\beta_2}{2\delta} (t-s)}dt { \mathbb{E}[|{\frac{dv_s^{(\varepsilon, \delta)}}{ds}  } |^2]}ds + {\frac{{ C }}{\delta }\int_{0}^{T}\int_{0}^{t}  e^{-\frac{\beta_2}{2\delta} (t-s)}{{\mathbb{E}[| \tilde {x}_{s}^{(\varepsilon,\delta)} |^2]}} dsdt}\\
&\quad+ \frac{{ C}}{\delta }\int_{0}^{T}\int_{0}^{t}  e^{-\frac{\beta_2}{2\delta} (t-s)}ds\\
&\le |y_0|^2 e^{-\frac{\beta_2}{2\delta} T} +   \frac{2\delta L}{\beta_2\sqrt {\delta \varepsilon } }\mathbb{E}\big[\sup_{0 \leq t \leq T}| \tilde {x}_{t}^{(\varepsilon,\delta)} |^2\int_{0}^{T}  e^{-\frac{\beta_2}{2\delta} (T-s)} | {\frac{dv_s^{(\varepsilon, \delta)}}{ds}  } |^2ds\big]  \\
&\quad+   \frac{2\delta L}{\beta_2\sqrt {\delta \varepsilon } }\int_{0}^{T} e^{-\frac{\beta_2}{2\delta} (T-s)} { \mathbb{E}[|{\frac{dv_s^{(\varepsilon, \delta)}}{ds}  } |^2]}ds + {C{\mathbb{E}[\sup_{ t\in[0,T]}| \tilde {x}_{t}^{(\varepsilon,\delta)} |^2]}\int_{0}^{T} e^{-\frac{\beta_2}{2\delta} (T-s)} ds}+ C.
\end{aligned}
\end{eqnarray*}
By the fact that $(u^{(\varepsilon, \delta)}, v^{(\varepsilon, \delta)})\in \mathcal{A}_{b}^N$, we have
\begin{eqnarray*}\label{3-24}
\int_{0}^{T}\mathbb{E}[{| \tilde {y}_{t}^{(\varepsilon,\delta)} |^2}]dt &\le& {C}\mathbb{E}[\mathop {\sup }\limits_{t \in \left[ {0,T} \right]} {| \tilde {x}_{t}^{(\varepsilon,\delta)} |^2}]+C.
\end{eqnarray*}
Combining this with \lemref{lem1}, we finish the proof. 
\qed

\section{Proof of Main Theorem (\thmref{thm})}\label{sec.5}
	In this Section, $H$ and $\alpha$ are such that $H\in(1/2,1)$ and  $1-H<\alpha<1/2$. We divide this section into three steps.

	\textbf{Step 1}.  Everything in this step is deterministic.
	Let $(u^{(j)}, v^{(j)})$ and $(u, v)$ belong to $S_N$ such that $(u^{(j)}, v^{(j)})\rightarrow(u, v)$ as $j\rightarrow\infty$ in the weak topology of $\mathcal{H}$. 
	In this step, it proceeds to show the following convergence:
	\begin{eqnarray} \label{3-26}
	\mathcal{G}^{0}( u^{(j)} ,v^{(j)} )\rightarrow\mathcal{G}^{0}(u,v) \label{step2}
	\end{eqnarray}
in $C^{\alpha}([0,T],\mathbb{R}^m)$ as $j \to \infty$.

	If $(u^{(j)}, v^{(j)})\in\mathcal{H}$, 	by the bounded embedding that 
	$\mathcal{H}^{H,d_1}\subset C^{1-\alpha}([0,T], \mathbb{R}^{d_1})$,  
	 we have the fact that $\{u^{(j)}\}_{j\ge 1} \in C^{1-\alpha}([0,T], \mathbb{R}^{d_1})$. Let $\{\tilde {x}^{(j)}_t\}_{j\ge 1}$ be a family of solutions to the equation (\ref{3}), that is,
	\begin{eqnarray*} \label{3-25}
	d\tilde {x}^{(j)}_t =  \bar{f}_1( \tilde {x}^{(j)}_t)dt + \sigma_{1}(\tilde {x}^{(j)}_t)du^{(j)}_t,\quad \tilde {x}^{(j)}_0=x_0.
	\end{eqnarray*}	
	There exists a unique solution $\{\tilde {x}^{(j)}\} $  to the skeleton equation (\ref{3}), and it is bounded in   $ C^{1-\alpha}([0,T], \mathbb{R}^{m})$ \cite[Proposition 3.6]{2020Budhiraja}. In other words, there is a constant $C$ such that
	\begin{equation*} \label{3-28}
	\|\tilde x^{(j)}\|_{1-\alpha} \leq C. 
	\end{equation*}	

	Since $\alpha<1-\alpha$, we have a compact embedding  $C^{1-\alpha}([0,T],\mathbb{R}^m) \subset  C^{\alpha}([0,T],\mathbb{R}^m)$. Therefore, the family $\{\tilde x^{(j)}\}_{ j\ge 1}$ is pre-compact in $ C^{\alpha}([0,T],\mathbb{R}^m)$. Let $\tilde x$ be any limit point. Then, there is  a subsequence of $\{\tilde x^{(j)}\}_{ j\ge 1}$ weakly converges to $\tilde x$ in $ C^{\alpha}([0,T],\mathbb{R}^m)$, which will be denoted by the same symbol. 
We will show that the limit point $\tilde x$ satisfies the  differential equation as follows,
	\begin{eqnarray} \label{3-44}
	d\tilde {x}_t =  \bar{f}_1( \tilde {x}_t)dt + \sigma_{1}(\tilde {x}_t)du_t \quad \tilde {x}_0=x_0.
	\end{eqnarray}
The uniqueness of (\ref{3-44}) implies the uniqueness of the limit point $\tilde x$.
	
	For any $u=\mathcal{K_H} {\dot u} \in \mathcal{H}^{H,d_1}$, $u$ is differentiable and 
	\begin{eqnarray}\label{3-29}
	u^{\prime}(t)=c_H t^{H-\frac{1}{2}}\left(I_{0+}^{H-\frac{1}{2}}\left(\psi^{-1} \dot{u}\right)\right)(t)=\frac{c_H t^{H-\frac{1}{2}}}{\Gamma\left(H-\frac{1}{2}\right)} \int_0^t(t-s)^{H-\frac{3}{2}} s^{\frac{1}{2}-H} \dot{u}(s) d s,
	\end{eqnarray}
	where $\psi(u)=u^{H-\frac{1}{2}}$.
	
	By the direct computation, we get that
	\begin{eqnarray*}\label{3-30}
|\tilde x^{(j)}_t-\tilde x_t| &\le&|\int_0^t (\bar f_1(\tilde x_s^{(j)}) -\bar f_1(\tilde x_s))d s | +|\int_0^t (\sigma_1( \tilde x_s^{(j)})-\sigma_1( \tilde x_s)) d u_s^{(j)} | \cr
&&+|\int_0^t \sigma_1( \tilde x_s) (d u_s^{(j)}-d u_s) |\cr
	&=:& K_1+K_2+K_3.
	\end{eqnarray*}
For the term $K_1$, by the definition of $\alpha$-H\"older norm and Assumption {(\bf{A2})}, it deduces that 
\begin{eqnarray*}\label{3-31}
K_1 ={\big|\int_0^t (\bar f_1(\tilde x_s^{(j)}) -\bar f_1(\tilde x_s))d s \big|} \le C\int_0^t |\tilde x_s^{(j)} -\tilde x_s|d s \le C\sup_{ t\in[0,T]}|\tilde x_t^{(j)} -\tilde x_t| .
\end{eqnarray*}
Then, it proceeds to estimate the second term $K_2$. By the differential formula (\ref{3-29}), Assumption {(\bf{A1})}, and Fubini theorem,  we have
\begin{eqnarray*}\label{3-32}
K_2 &=&{\big|\int_0^t ( \sigma_1(\tilde x_s^{(j)}) - \sigma_1(\tilde x_s))d u^{(j)}_s \big|} \cr
&\le&{\bigg|\int_0^t(\sigma(x_s^{(j)})-\sigma(x_s^{(j)})) s^{\frac{1}{2}-H}\big[\int_0^s(s-r)^{H-\frac{3}{2}} r^{H-\frac{1}{2}} \dot u^{(j)}_rdr\big]  ds \bigg|} \cr
&\le&C\sup_{ t\in[0,T]}|\tilde x_t^{(j)} -\tilde x_t|\times{\bigg|\int_0^t r^{\frac{1}{2}-H}\big[\int_r^t(s-r)^{H-\frac{3}{2}} s^{H-\frac{1}{2}}ds\big]  |\dot u^{(j)}_r|dr \bigg|} \cr
&\le&C\sup_{ t\in[0,T]}|\tilde x_t^{(j)} -\tilde x_t|\times{\bigg|\int_0^t r^{\frac{1}{2}-H} |\dot u^{(j)}_r|dr \bigg|} \cr
&\le&C\sup_{ t\in[0,T]}|\tilde x_t^{(j)} -\tilde x_t|\|u^{(j)}\|_{\mathcal{H}^{H,d_1}}\cr
&\le& C\sqrt{2N}\sup_{ t\in[0,T]}|\tilde x_t^{(j)} -\tilde x_t|.
\end{eqnarray*}
Since  $\{\tilde x^{(j)}\}_{ j\ge 1}$ converges to $\tilde x$ in the uniform norm, we have
$K_1+ K_1\to0$ as $j \to \infty$.

Finally, we compute the third term $K_3$. By Fubini theorem, we have
\begin{eqnarray*}\label{3-35}
K_3 &=&{\big|\int_0^t \sigma_1(\tilde x_s)(d u^{(j)}_s-d u_s) \big|} \cr
&=&{\bigg|\int_0^t \sigma(\tilde x_s) s^{H-\frac{1}{2}}\big[\int_0^s(s-r)^{H-\frac{3}{2}} r^{\frac{1}{2}-H}(\dot u^{(j)}_r-\dot u_r)dr\big] ds \bigg|} \cr
&=&{\bigg|\int_0^T \mathbf{1}_{[0,t]}(r) r^{\frac{1}{2}-H}\big[\int_r^t(s-r)^{H-\frac{3}{2}} s^{H-\frac{1}{2}}\sigma(\tilde x_s)ds\big]  (\dot u^{(j)}_r-\dot u_r)dr \bigg|} .
\end{eqnarray*}
Set
$ K_r(t)=\big[\int_r^t(s-r)^{H-\frac{3}{2}} s^{H-\frac{1}{2}}\sigma(\tilde x_s)ds\big] $, then we have
$|K_r(t)|\le C(1+\|\tilde x\|)$.
Since $(\dot u^{(j)}, \dot v^{(j)})\rightarrow(\dot u, \dot v)$ as $j\rightarrow\infty$ in the weak topology of $L^2$, then $K_3$ converges to 0 as $j\rightarrow\infty$.

Therefore, we can get that the limit point $\tilde x$ satisfies the differential equation (\ref{3}).Therefore, we get that  $\{\tilde x^{(j)}\}_{ j\ge 1}$ weakly converges to $\tilde x$ in $ C^{\alpha}([0,T],\mathbb{R}^m)$.

	\textbf{Step 2}. In this step, we make probabilistic arguments. 
	Let $0<N<\infty$ and 
	 assume $0<\delta<\varepsilon$, $\delta=o(\varepsilon)$ and we will let $\varepsilon \to 0$.
	
	 Assume  $(u^{(\varepsilon, \delta)}, v^{(\varepsilon, \delta)})\in \mathcal{A}^{N}_b$ such that $(u^{(\varepsilon, \delta)}, v^{(\varepsilon, \delta)})$ weakly converges  to $(u, v)$ as $\varepsilon \to 0$. Reformulating the slow variables of system (\ref{2}) as follows,
	\[
	\tilde {x}^{(\varepsilon,\delta)}:=\mathcal{G}^{(\varepsilon,\delta)}(\sqrt \varepsilon B^H+u^{(\varepsilon, \delta)}, \sqrt \varepsilon W+v^{(\varepsilon, \delta)}),
	\]
	we will show that  as $\varepsilon\rightarrow 0$, $\tilde {x}^{(\varepsilon,\delta)}$ weakly converges to $\tilde {x}$ in $ C^{\alpha}([0,T],\mathbb{R}^m)$, that is,
	\begin{eqnarray} \label{step3}
	\mathcal{G}^{(\varepsilon,\delta)}(\sqrt \varepsilon B^H+u^{(\varepsilon, \delta)}, \sqrt \varepsilon W+v^{(\varepsilon, \delta)})\xrightarrow{\textrm{weakly}}\mathcal{G}^{0}(u , v).
	\end{eqnarray}	
	 To show (\ref{step3}), construct the auxiliary processes as follows:
	\begin{eqnarray*}
	\left
	\{
	\begin{array}{ll}
	d\hat {x}^{(\varepsilon, \delta)}_t =& f_{1}(\tilde {x}^{(\varepsilon, \delta)}_{t(\Delta)}, \hat {y}^{(\varepsilon, \delta)}_t)dt + \sigma_{1}(\hat {x}^{(\varepsilon, \delta)}_t)du^{(\varepsilon, \delta)}_t,\\
	\delta d\hat {y}^{(\varepsilon, \delta)}_t =&  f_{2}( \tilde {x}^{(\varepsilon, \delta)}_{t(\Delta)}, \hat {y}^{(\varepsilon, \delta)}_t)dt + {\sqrt \delta }\sigma_2(\tilde {x}^{(\varepsilon, \delta)}_{t(\Delta)}, \hat {y}^{(\varepsilon, \delta)}_t)dW_{t}
	\end{array}
	\right.
	\end{eqnarray*}
		with the same intial condition as (\ref{2}), where $t(\Delta)=\left\lfloor\frac{t}{\Delta}\right\rfloor \Delta$ is the nearest breakpoint preceding $t$. Note that in $\hat x_t^{(\varepsilon,\delta)}$ is substituted in $\sigma_1$. 
				
By taking same manner in \lemref{lem1} and \lemref{lem2}, we have
\begin{eqnarray}\label{3-36}
\mathbb{E}\big[\|\hat x^{(\varepsilon,\delta)}\|_{\alpha, \infty}^p\big] \leq C
\end{eqnarray}
and 
\begin{eqnarray*}\label{3-37}
\int_{0}^{T} \mathbb{E}\big[|\hat y_t^{(\varepsilon,\delta)}|^2\big] dt\leq C,
\end{eqnarray*}	
where the constant $C$ is independent of $\varepsilon, \delta,\Delta$.

From now, we will estimate 
\begin{eqnarray*}\label{3-38}
\mathbb{E}\big[\sup _{t \in[0, T]}\|\tilde x^{(\varepsilon,\delta)}-\hat{x}^{(\varepsilon,\delta)}\|_{\alpha,[0,t]}^2\big].
\end{eqnarray*}
Let $R>0$ be large enough and define the stopping time $\tau_R:=\inf \left\{t \in[0,T]:\|B^H\|_{1-\alpha, \infty, t} \geq R\right\} \wedge T$.
Firstly, we can get that
\begin{eqnarray}\label{b3-40}
\mathbb{E}\Big[\sup _{t \in[0, T]}\left\|\tilde {x}^{(\varepsilon,\delta)}-\hat{x}^{(\varepsilon,\delta)}\right\|_{\alpha,[0,t]}^2\Big] \leq & \mathbb{E}\left[\sup _{t \in[0, T]}\left\|\tilde {x}^{(\varepsilon,\delta)}-\hat{x}^{(\varepsilon,\delta)}\right\|_{\alpha,[0,t]}^2 I_{\{\tau_R \le T\}}\right] \cr
&+\mathbb{E}\left[\sup _{t \in[0, T]}\left\|\tilde {x}^{(\varepsilon,\delta)}-\hat{x}^{(\varepsilon,\delta)}\right\|_{\alpha,[0,t]}^2I_{\{\tau_R > T\}}\right].
\end{eqnarray}
For the first term, by the H\"older inequality, we have
\begin{eqnarray}\label{b3-41}
\mathbb{E}\Big[\sup _{t \in[0, T]}\left\|\tilde {x}^{(\varepsilon,\delta)}-\hat{x}^{(\varepsilon,\delta)}\right\|_{\alpha,[0,t]}^2 I_{\{\tau_R \le T\}}\Big] &\leq & \mathbb{E}\Big[\sup _{t \in[0, T]}\left\|\tilde {x}^{(\varepsilon,\delta)}-\hat{x}^{(\varepsilon,\delta)}\right\|_{\alpha,[0,t]}^4 \Big]^{1/2}\mathbb{P}(\tau_R \le T)^{1/2} \cr
&\leq & \mathbb{E}\Big[\sup _{t \in[0, T]}\left\|\tilde {x}^{(\varepsilon,\delta)}-\hat{x}^{(\varepsilon,\delta)}\right\|_{\alpha,[0,t]}^4 \Big]^{1/2}\times R^{-1}\sqrt{ \mathbb{E}\left[\left\|B^H\right\|_{1-\alpha, \infty, T}^2\right]} \cr
&\leq& C R^{-1}\sqrt{ \mathbb{E}\left[\left\|B^H\right\|_{1-\alpha, \infty, T}^2\right]},
\end{eqnarray}
where the final inequality comes from (\ref{3-36}) and \lemref{lem1}.
 
 We now compute the second term in (\ref{b3-40}).
 
Define $D:=\{\|B^H\|_{1-\alpha, \infty, T} \leq R\}$. For $\lambda>0$, we will estimate 
\begin{eqnarray*}\label{b3-39}
\mathbf{I}:=\mathbb{E}\left[\sup _{t \in[0, T]} e^{-\lambda t}\|\tilde x^{(\varepsilon,\delta)}-\hat{x}^{(\varepsilon,\delta)}\|_{\alpha,[0,t]}^2 \mathbf{1}_D\right].
\end{eqnarray*}
It is easy to see that,
\begin{eqnarray*}\label{3-40}
\mathbf{I} &\leq & C \mathbb{E}\left[\sup _{t \in[0, T]}e^{-\lambda t}\left\|\int_0^\cdot\big(f_1( \tilde x_s^{(\varepsilon,\delta)}, \tilde y_s^{(\varepsilon,\delta)})-f_1( \tilde x_{s(\Delta)}^{(\varepsilon,\delta)}, \hat{y}_s^{(\varepsilon,\delta)})\big) d s\right\|_{\alpha,[0,t]}^2 \mathbf{1}_D\right] \cr
&&+C \mathbb{E}\left[\sup _{t \in[0, T]}e^{-\lambda t}\left\|\int_0^\cdot\big(\sigma_1( \tilde x_s^{(\varepsilon,\delta)})-\sigma_1( \hat x_{s}^{(\varepsilon,\delta)})\big) d u_s^{(\varepsilon,\delta)}\right\|_{\alpha,[0,t]}^2 \mathbf{1}_D\right] \cr
&&+C \mathbb{E}\left[\varepsilon\sup _{t \in[0, T]}e^{-\lambda t}\left\|\int_0^\cdot\sigma_1( \tilde x_s^{(\varepsilon,\delta)}) d B_s^H\right\|_{\alpha,[0,t]}^2 \mathbf{1}_D\right] \cr
&=:& I_1+I_2+I_3.
\end{eqnarray*}
Before computing $I_i,i=1,2,3$, we note the following fact. For a measurable function $f:[0,T]\to \mathbb{R}^d$, $A=:\|\int_{0}^{\cdot} fds\|_{{\alpha,[0,t]}}$ needs to be estimated in advance as follows:
\begin{eqnarray}\label{3-39}
A & \leq&\left|\int_0^t f(s) d s\right|+\int_0^t(t-s)^{-1-\alpha} \int_s^t|f(r)| d r d s \cr
& \leq& t^\alpha \int_0^t(t-r)^{-\alpha}|f(r)| d r+C
 \int_0^t(t-r)^{-\alpha}|f(r)| dr \cr
& \leq& C \int_0^t(t-r)^{-\alpha}|f(r)| d r.
\end{eqnarray}

Rearrange $I_1$ as follows, and by  Assumption (\textbf{A2}), (\ref{3-39}),  \lemref{lem1}, we have
\begin{eqnarray}\label{3-41}
I_1 &\leq & C \mathbb{E}\left[\sup _{t \in[0, T]}\left\|\int_0^\cdot\big(f_1( \tilde x_s^{(\varepsilon,\delta)}, \tilde y_s^{(\varepsilon,\delta)})-f_1( \tilde x_{s(\Delta)}^{(\varepsilon,\delta)}, \tilde {y}_s^{(\varepsilon,\delta)})\big) d s\right\|_{\alpha,[0,t]}^2 \mathbf{1}_D\right] \cr
&&+C \mathbb{E}\left[\sup _{t \in[0, T]}\left\|\int_0^\cdot\big(f_1( \tilde x_{s(\Delta)}^{(\varepsilon,\delta)}, \tilde {y}_s^{(\varepsilon,\delta)})-f_1( \tilde x_{s(\Delta)}^{(\varepsilon,\delta)}, \hat{y}_s^{(\varepsilon,\delta)})\big) d s\right\|_{\alpha,[0,t]}^2 \mathbf{1}_D\right] \cr
&\leq & C\Big[\sup _{t \in[0, T]}\int_{0}^{t} (t-s)^{-2\alpha}ds\Big]\times \int_0^T\mathbb{E}\left[\big|f_1( \tilde x_s^{(\varepsilon,\delta)}, \tilde y_s^{(\varepsilon,\delta)})-f_1( \tilde x_{s(\Delta)}^{(\varepsilon,\delta)}, \tilde {y}_s^{(\varepsilon,\delta)})\big|^2 \mathbf{1}_D\right]d s \cr
&&+C\Big[\sup _{t \in[0, T]}\int_{0}^{t} (t-s)^{-2\alpha}ds\Big]\times \int_0^T\mathbb{E}\left[\big|f_1( \tilde x_{s(\Delta)}^{(\varepsilon,\delta)}, \tilde {y}_s^{(\varepsilon,\delta)})-f_1( \tilde x_{s(\Delta)}^{(\varepsilon,\delta)}, \hat{y}_s^{(\varepsilon,\delta)})\big|^2\right]d s \cr
&\leq & C \int_0^T\mathbb{E}\left[\big|f_1( \tilde x_s^{(\varepsilon,\delta)}, \tilde y_s^{(\varepsilon,\delta)})-f_1( \tilde x_{s(\Delta)}^{(\varepsilon,\delta)}, \tilde {y}_s^{(\varepsilon,\delta)})\big|^2 \mathbf{1}_D\right]d s \cr
&&+C \int_0^T\mathbb{E}\left[\big|f_1( \tilde x_{s(\Delta)}^{(\varepsilon,\delta)}, \tilde {y}_s^{(\varepsilon,\delta)})-f_1( \tilde x_{s(\Delta)}^{(\varepsilon,\delta)}, \hat{y}_s^{(\varepsilon,\delta)})\big|^2\mathbf{1}_D\right]d s \cr
&\leq & C \int_0^T\mathbb{E}\left[\big|\tilde x_s^{(\varepsilon,\delta)}-\tilde x_{s(\Delta)}^{(\varepsilon,\delta)}\big|^2+\big|\tilde y_s^{(\varepsilon,\delta)}-\hat y_s^{(\varepsilon,\delta)}\big|^2\mathbf{1}_D\right]d s \cr
&\leq &C \Delta,
\end{eqnarray}
where the constant $C$ is independent of $\varepsilon, \delta,\Delta$ which may change from line to line. Note that $2\alpha<1$. The final inequality comes from \lemref{lem2}  and the fact that
\begin{eqnarray*}\label{3-42}
\int_{0}^{T}\mathbb{E}\big[|\tilde y_{t}^{(\varepsilon,\delta)}-\hat y_{t}^{(\varepsilon,\delta)}|^2\big]dt \leq C \Delta,
\end{eqnarray*}
which is a slight extension of   \cite[Lemma 4.4]{2020Large}.

To estimate $I_2$, we firstly give the following estimate: $${B}:=\left\|\int_0^\cdot f(r) d u^{(\varepsilon,\delta)}_r\right\|_{\alpha,[0,t]},$$ 
where $f$ is a measurable function.
In the similar way to the proof of \lemref{lem1}, we have
\begin{eqnarray*}\label{3-43}
{B} &\leq& C \|u^{(\varepsilon,\delta)}\|_{1-\alpha,\infty,T} \int_0^t\left((t-r)^{-2 \alpha}+r^{-\alpha}\right)\left(|f(r)|+\int_0^r \frac{|f(r)-f(q)|}{(r-q)^{1+\alpha}} d q\right) d r\cr
&\leq& C \|u^{(\varepsilon,\delta)}\|_{\mathcal{H}^{H,d_1}} \int_0^t\left((t-r)^{-2 \alpha}+r^{-\alpha}\right)\left(|f(r)|+\int_0^r \frac{|f(r)-f(q)|}{(r-q)^{1+\alpha}} d q\right) d r\cr
&\leq& C \int_0^t\left((t-r)^{-2 \alpha}+r^{-\alpha}\right)\|f\|_{\alpha,[0,r]} d r.
\end{eqnarray*}
Here, we used $\|u^{(\varepsilon,\delta)}\|_{\mathcal{H}^{H,d_1}}\le\sqrt{2N} $.

By \cite[Lemma 7.1]{2002Rascanu}, we have that
\begin{eqnarray*}
\mid \sigma\left( x_1\right)-\sigma\left( x_2\right) -\sigma\left( x_3\right)+\sigma\left( x_4\right) \mid 
\leq  C\left|x_1-x_2-x_3+x_4\right|+C\left|x_1-x_3\right|\left(\left|x_1-x_2\right|+\left|x_3-x_4\right|\right) .
\end{eqnarray*}
Now we estimate $I_2$ as follows:
\begin{eqnarray*}\label{3-45}
I_2 &\le &C \mathbb{E}\left[\sup _{t \in[0, T]}\left|\int_0^t e^{-\lambda t}\left[(t-r)^{-2 \alpha}+r^{-\alpha}\right]\big\|\sigma_1( \tilde {x}^{(\varepsilon,\delta)})-\sigma_1( \hat{x}^{(\varepsilon,\delta)})\big\|_{\alpha,[0,r]} \mathbf{1}_D d r\right|^2\right] \cr
&\leq & C \mathbb{E}\left[\sup _{t \in[0, T]} \mid \int_0^t e^{-\lambda t}\left[(t-r)^{-2 \alpha}+r^{-\alpha}\right]\right.\cr
&&\left.\times\left.\big(1+\Delta(\tilde{x}_r^{(\varepsilon,\delta)})+\Delta(\hat{x}_r^{(\varepsilon,\delta)})\big)\big\|\tilde {x}^{(\varepsilon,\delta)}-\hat{x}^{(\varepsilon,\delta)}\big\|_{\alpha,[0,r]} 1_D d r\right|^2\right],
\end{eqnarray*}
where $\Delta(\tilde{x}_r^{(\varepsilon,\delta)})=\int_0^r \frac{|\tilde{x}_r^{(\varepsilon,\delta)}-\tilde{x}_q^{(\varepsilon,\delta)}|}{(r-q)^{1+\alpha}} d q$ and $\Delta(\hat{x}_r^{(\varepsilon,\delta)})=\int_0^r \frac{|\hat{x}_r^{(\varepsilon,\delta)}-\hat{x}_q^{(\varepsilon,\delta)}|}{(r-q)^{1+\alpha}} d q$. These above two parts can be dominated by $\|\tilde x^{(\varepsilon,\delta)}\|_{\alpha, \infty}$ and $\|\hat x^{(\varepsilon,\delta)}\|_{\alpha, \infty}$, which are in turn dominated by $C=C(R,N)>0$ independent of $\varepsilon,\delta,\Delta$. 
Then, from  (\ref{3-3}) and (\ref{3-10}), we can see that
\begin{eqnarray}\label{3-47}
I_2 &\le &C \lambda^{2 \alpha-1} \mathbb{E}\big[\sup _{t \in[0, T]} e^{-\lambda t}\|\tilde{x}^{(\varepsilon,\delta)}-\hat{x}^{(\varepsilon,\delta)}\|_{\alpha,[0,t]}^2 \mathbf{1}_D\big].
\end{eqnarray}

For the third term $I_3$, by the \lemref{lem1}, we have
\begin{eqnarray}\label{3-48}
I_3 &\le &\varepsilon C \mathbb{E}\left[\sup _{t \in[0, T]}e^{-\lambda t}\left\|\int_0^\cdot\sigma_1( \tilde x_s^{(\varepsilon,\delta)}) d B_s^H\right\|_{\alpha,[0,t]}^2 \mathbf{1}_D\right]\le  \varepsilon C
\end{eqnarray}
for some constant $C=C(R,N)>0$ independent of $\varepsilon,\delta,\Delta$.

Combining  (\ref{3-41}), (\ref{3-47}) and  (\ref{3-48}), we have
\begin{eqnarray*}\label{3-49}
\mathbb{E}\left[\sup _{t \in[0, T]} e^{-\lambda t}\|\tilde x^{(\varepsilon,\delta)}-\hat{x}^{(\varepsilon,\delta)}\|_{\alpha,[0,t]}^2 \mathbf{1}_D\right]\le C\Delta ,
\end{eqnarray*}
which immediately implies that
\begin{eqnarray*}\label{3-50}
\mathbb{E}\left[\sup _{t \in[0, T]} \|\tilde x_t^{(\varepsilon,\delta)}-\hat{x}_t^{(\varepsilon,\delta)}\|_{\alpha,[0,t]}^2 \mathbf{1}_D\right]\le C\Delta.
\end{eqnarray*}
With aid of the (\ref{b3-41}), we have
\begin{eqnarray*}\label{3-51}
\mathbb{E}\big[\sup _{t \in[0, T]}\|\tilde x^{(\varepsilon,\delta)}-\hat{x}^{(\varepsilon,\delta)}\|_{\alpha,[0,t]}^2\big] \leq C \Delta+C^{\prime}R^{-1}
\end{eqnarray*}
for certain constants $C=C(R,N)>0$ independent of $\varepsilon,\delta,\Delta$, and $C^{\prime}$  independent of $\varepsilon,\delta,\Delta,R$.

For $(u^{(\varepsilon, \delta)}, v^{(\varepsilon, \delta)})\in \mathcal{A}^{N}_b$, we construct  the following differential equation:
\begin{eqnarray*}
d\tilde {x}^{\varepsilon}_t =&\bar f_{1}(\tilde {x}^\varepsilon_{t})dt + \sigma_{1}(\tilde {x}^{\varepsilon}_t)du^{(\varepsilon, \delta)}_t
\end{eqnarray*}
with $\tilde {x}^{\varepsilon}_0= {x}_0$. In other word, $\tilde {x}^{\varepsilon}=\mathcal{G}^{0}(u^{(\varepsilon, \delta)},v^{(\varepsilon, \delta)})$.
We will show that 
\begin{eqnarray*}\label{3-52}
\mathbb{E}\big[\sup _{t \in[0, T]}\|\hat x^{(\varepsilon,\delta)}-\tilde{x}^{\varepsilon}\|_{\alpha,[0,t]}^2\big] \leq C \Delta,
\end{eqnarray*}
where $C>0$ is a constant independent of $\varepsilon, \delta,\Delta$. So, we have
\begin{eqnarray*}\label{3-53}
\mathbb{E}\big[\sup _{t \in[0, T]}e^{-\lambda t}\|\hat x^{(\varepsilon,\delta)}-\tilde{x}^{\varepsilon}\|_{\alpha,[0,t]}^2\big]  &\leq & C \mathbb{E}\left[\sup _{t \in[0, T]}e^{-\lambda t}\bigg\|\int_0^\cdot\big(f_1( \tilde x_{s(\Delta)}^{(\varepsilon,\delta)}, \hat{y}_s^{(\varepsilon,\delta)})-\bar f_1( \tilde x_{s(\Delta)}^{(\varepsilon,\delta)})\big) d s\bigg\|_{\alpha,[0,t]}^2 \right]\cr
&&+C \mathbb{E}\left[\sup _{t \in[0, T]}e^{-\lambda t}\bigg\|\int_0^\cdot\big(\bar f_1( \tilde x_{s(\Delta)}^{(\varepsilon,\delta)})-\bar f_1( \tilde x_{s}^{\varepsilon})\big) d s\bigg\|_{\alpha,[0,t]}^2 \right]\cr
&&+C \mathbb{E}\left[\sup _{t \in[0, T]}e^{-\lambda t}\bigg\|\int_0^\cdot\big(\sigma_1( \hat x_s^{(\varepsilon,\delta)})-\sigma_1( \tilde x_{s}^{\varepsilon})\big) d u_s^\varepsilon\bigg\|_{\alpha,[0,t]}^2 \right] \cr
&=:& J_1+J_2+J_3.
\end{eqnarray*}
Firstly, we estimate $J_1$ as follows,
\begin{eqnarray*}\label{3-54}
J_1 &\leq & C \mathbb{E}\bigg[\sup _{t \in[0, T]}\bigg|\sum_{k=0}^{\left\lfloor\frac{t}{\Delta}\right\rfloor-1} \int_{k \Delta}^{(k+1) \Delta}\big(f_1(\tilde x_{k \Delta}^{(\varepsilon,\delta)}, \hat{y}_s^{(\varepsilon,\delta)})-\bar{f}_1( \tilde x_{k \Delta}^{(\varepsilon,\delta)})\big) d s\bigg|^2\bigg] \cr
&&+C \mathbb{E}\bigg[\sup _{t \in[0, T]}\bigg|\int_{\left\lfloor\frac{t}{\Delta}\right\rfloor \Delta}^t\big(f_1( \tilde x_{s(\Delta)}^{(\varepsilon,\delta)}, \hat{y}_s^{(\varepsilon,\delta)})-\bar{f}_1( \tilde x_{s(\Delta)}^{(\varepsilon,\delta)})\big) d s\bigg|^2\bigg] \cr
&&+C \mathbb{E}\bigg[\sup _{t \in[0, T]}\bigg(\int_0^t \frac{\left|\int_s^t\big(f_1( \tilde x_{r(\Delta)}^{(\varepsilon,\delta)}, \hat{y}_r^{(\varepsilon,\delta)})-\bar{f}_1( \tilde x_{r(\Delta)}^{(\varepsilon,\delta)}\big)) d r\right|}{(t-s)^{1+\alpha}} d s\bigg)^2\bigg] \cr
&=:&  \sum_{i=1}^3 {J}_1^i .
\end{eqnarray*}
We compute  terms $\sum_{i=1}^2 {J}_1^i$ as follows,
\begin{eqnarray}\label{3-55}
\sum_{i=1}^2 {J}_1^i &\leq & C \mathbb{E}\big[\sup _{t \in[0, T]}\left\lfloor\frac{t}{\Delta}\right\rfloor \sum_{k=0}^{\left\lfloor\frac{t}{\Delta}\right\rfloor-1}\big|\int_{k \Delta}^{(k+1) \Delta}\big(f_1(\tilde x_{k \Delta}^{(\varepsilon,\delta)}, \hat{y}_s^{(\varepsilon,\delta)})-\bar{f}_1( \tilde x_{k \Delta}^{(\varepsilon,\delta)})\big) d s\big|^2\big] \cr
&&+C_T \Delta^2 \cr
&\leq & \frac{C}{\Delta^2} \max _{0 \leq k \leq\left\lfloor\frac{T}{\Delta}\right\rfloor-1} \mathbb{E}\big[\big|\int_{k \Delta}^{(k+1) \Delta}\big(f_1( \tilde x_{k \Delta}^{(\varepsilon,\delta)})-\bar{f}_1( \tilde x_{k \Delta}^{(\varepsilon,\delta)})\big) d s\big|^2\big] +C_T \Delta^2 \cr
&\leq & C \frac{\delta^2}{\Delta^2} \max _{0 \leq k \leq\left\lfloor\frac{T}{\Delta}\right\rfloor-1} \int_0^{\frac{\Delta}{\delta}} \int_\zeta^{\frac{\Delta}{\delta}} \mathcal{J}_k(s, \zeta) d s d \zeta+C_T \Delta^2,
\end{eqnarray}
where $0 \leq \zeta \leq s \leq \frac{\Delta}{\delta}$ and
\begin{eqnarray}\label{3-56}
\mathcal{J}_k(s, \zeta)&=&\mathbb{E}\big[\big\langle f_1( \tilde x_{k \Delta}^{(\varepsilon,\delta)}, \hat{y}_{s \varepsilon+k \Delta}^{(\varepsilon,\delta)})-\bar{f}_1( \tilde x_{k \Delta}^{(\varepsilon,\delta)}),f_1( \tilde x_{k \Delta}^{(\varepsilon,\delta)}, \hat{y}_{\zeta \varepsilon+k \Delta}^{(\varepsilon,\delta)})-\bar{f}_1( \tilde x_{k \Delta}^{(\varepsilon,\delta)})\big\rangle\big].
\end{eqnarray}
It is known that \begin{eqnarray}\label{er}
\mathcal{J}_k(s, \zeta) \leq C e^{-\frac{\beta_1}{2}(s-\zeta)},
\end{eqnarray}
where $\beta_1$ is  in (\textbf{A3}), whose proof could  refer to  \cite[Appendix B]{2023Pei}.

We also compute $J_1^3$ as follows:
\begin{eqnarray}\label{3-57}
J_1^3 
&\leq & C \mathbb{E}\big[\sup _{t \in[0, T]}\int_0^t{(t-s)^{-\frac{1}{2}-\alpha}}ds\times\sup _{t \in[0, T]}\int_0^t \frac{\big|\int_s^t\big(f_1( \tilde x_{r(\Delta)}^{(\varepsilon,\delta)}, \hat{y}_r^{(\varepsilon,\delta)})-\bar{f}_1( \tilde x_{r(\Delta)}^{(\varepsilon,\delta)}\big)) d r\big|^2}{(t-s)^{\frac{3}{2}+\alpha}} d s\big] \cr
&\leq & C \mathbb{E} \big[\sup _{t \in[0, T]}\int_0^t \frac{\big|\int_s^t\big(f_1( \tilde x_{r(\Delta)}^{(\varepsilon,\delta)}, \hat{y}_r^{(\varepsilon,\delta)})-\bar{f}_1( \tilde x_{r(\Delta)}^{(\varepsilon,\delta)}\big)) d r\big|^2}{(t-s)^{\frac{3}{2}+\alpha}}\mathbf{1}_{\ell^c} d s\big] \cr
&&+ C  \mathbb{E}\big[\sup _{t \in[0, T]}\int_0^t \frac{\big|\int_s^t\big(f_1( \tilde x_{r(\Delta)}^{(\varepsilon,\delta)}, \hat{y}_r^{(\varepsilon,\delta)})-\bar{f}_1( \tilde x_{r(\Delta)}^{(\varepsilon,\delta)}\big)) d r\big|^2}{(t-s)^{\frac{3}{2}+\alpha}} \mathbf{1}_\ell d s\big] \cr
&=:& {J}_1^{31}+ {J}_1^{32}.
\end{eqnarray}
where $\ell:=\left\{t-s<2 \Delta\right\}$ and $\ell^c:=\left\{t-s \geq2 \Delta\right\}$.

Then, we compute the term ${J}_1^{31}$,
\begin{eqnarray}\label{3-58}
{J}_1^{31} 
&\leq &C \mathbb{E} \bigg[\sup _{t \in[0, T]}\int_0^t \frac{\big|\int_s^t\big(f_1( \tilde x_{r(\Delta)}^{(\varepsilon,\delta)}, \hat{y}_r^{(\varepsilon,\delta)})-\bar{f}_1( \tilde x_{r(\Delta)}^{(\varepsilon,\delta)}\big)) d r\big|^2}{(t-s)^{\frac{3}{2}+\alpha}}\mathbf{1}_{\ell^c} d s\bigg] \cr
&\le&C \mathbb{E}\bigg[\sup _{t \in[0, T]} \int_0^t \frac{\left|\int_s^{\left\lfloor\frac{s}{\Delta}\right\rfloor \Delta+1}\big(f_1( \tilde x_{r(\Delta)}^{(\varepsilon,\delta)}, \hat{y}_r^{(\varepsilon,\delta)})-\bar{f}_1( \tilde x_{r(\Delta)}^{(\varepsilon,\delta)})\big) d r\right|^2}{(t-s)^{\frac{3}{2}+\alpha}} \mathbf{1}_{\ell^c} d s\bigg] \cr
&&+C \mathbb{E}\bigg[\sup _{t \in[0, T]} \int_0^t \frac{\left|\int_{\left\lfloor\frac{t}{\Delta}\right\rfloor \Delta}^t\big(f_1( \tilde x_{r(\Delta)}^{(\varepsilon,\delta)}, \hat{y}_r^{(\varepsilon,\delta)})-\bar{f}_1( \tilde x_{r(\Delta)}^{(\varepsilon,\delta)})\big) d r\right|^2}{(t-s)^{\frac{3}{2}+\alpha}} \mathbf{1}_{\ell^c} d s\bigg] \cr
&&+C \mathbb{E}\bigg[\sup _{t \in[0, T]} \int_0^t \frac{\left(\left\lfloor\frac{t}{\Delta}\right\rfloor-\left\lfloor\frac{s}{\Delta}\right\rfloor-1\right)}{(t-s)^{\frac{3}{2}+\alpha}}\times \sum_{k=\left\lfloor\frac{s}{\Delta}\right\rfloor+1}^{\left\lfloor\frac{t}{\Delta}\right\rfloor-1}\bigg|\int_{k \Delta}^{(k+1) \Delta}\big(f_1(\tilde x_{k \Delta}^{(\varepsilon,\delta)}, \hat{y}_r^{(\varepsilon,\delta)})-\bar{f}_1( \tilde x_{k \Delta}^{(\varepsilon,\delta)})\big) d r\bigg|^2 \mathbf{1}_{\ell^c} d s\bigg] \cr
&\leq& C \sup _{t \in[0, T]}\bigg(\int_0^t(t-s)^{-\frac{1}{2}-\alpha}((\left\lfloor\frac{s}{\Delta}\right\rfloor+1) \Delta-s) d s\bigg)+C \sup _{t \in[0, T]}\bigg(\int_0^t(t-s)^{-\frac{1}{2}-\alpha}(t-\left\lfloor\frac{t}{\Delta}\right\rfloor \Delta)  d s\bigg) \cr
&&+C \Delta^{-1} \mathbb{E}\bigg[\sup _{t \in[0, T]} \int_0^t(t-s)^{-\frac{1}{2}-\alpha}\times \sum_{k=\left\lfloor\frac{s}{\Delta}\right\rfloor+1}^{\left\lfloor\frac{t}{\Delta}\right\rfloor-1}\big|\int_{k \Delta}^{(k+1) \Delta}\big(f_1( \tilde x_{k \Delta}^{(\varepsilon,\delta)}, \hat{y}_r^{(\varepsilon,\delta)})-\bar{f}_1( \tilde x_{k \Delta}^{(\varepsilon,\delta)})\big) d r\big|^2 \mathbf{1}_{\ell^c} d s\bigg] \cr
&\leq& C \Delta+C \frac{\delta^2}{\Delta^2} \max _{0 \leq k \leq\left\lfloor\frac{T}{\Delta}\right\rfloor-1} \int_0^{\frac{\Delta}{\delta}} \int_\zeta^{\frac{\Delta}{\delta}} \mathcal{J}_k(s, \zeta) d s d \zeta .
\end{eqnarray}
For the term ${J}_1^{32}$, by  Assumption (\textbf{A2}), \lemref{lem1} and the fact that $t-s\leq 2 \Delta$, we have
\begin{eqnarray}\label{3-59}
{J}_1^{32}
&\leq & C \mathbb{E}\left[\sup _{t \in[\Delta, T]} \int_0^{t(\Delta)-\Delta} \frac{\big|\int_s^t\big(f_1(\tilde x_{r(\Delta)}^{(\varepsilon,\delta)}, \hat{y}_r^{(\varepsilon,\delta)})-\bar{f}_1(\tilde x_{r(\Delta)}^{(\varepsilon,\delta)})\big) d r\big|^2}{(t-s)^{\frac{3}{2}+\alpha}} \mathbf{1}_{ \ell} d s\right] \cr
&&+C \mathbb{E}\left[\sup _{t \in[\Delta, T]} \int_{t(\Delta)-\Delta}^t \frac{\big|\int_s^t\big(f_1(\tilde x_{r(\Delta)}^{(\varepsilon,\delta)}, \hat{y}_r^{(\varepsilon,\delta)})-\bar{f}_1(\tilde x_{r(\Delta)}^{(\varepsilon,\delta)})\big) d r\big|^2}{(t-s)^{\frac{3}{2}+\alpha}}\mathbf{1}_{ \ell} d s\right] \cr
&&+C\mathbb{E}\left[\sup _{t \in[0, \Delta]} \int_0^t \frac{\left|\int_s^t\big(f_1(\tilde x_{r(\Delta)}^{(\varepsilon,\delta)}, \hat{y}_r^{(\varepsilon,\delta)})-\bar{f}_1(\tilde x_{r(\Delta)}^{(\varepsilon,\delta)})\big) d r\right|^2}{(t-s)^{\frac{3}{2}+\alpha}} 1_{\ell} d s\right] \cr
&\leq & C \Delta \sup _{t \in[\Delta, T]}\big(\int_0^{t(\Delta)-\Delta}(t-s)^{-\frac{1}{2}-\alpha}  ds\big) +C \sup _{t \in[\Delta, T]}\big(\int_{t(\Delta)-\Delta}^t(t-s)^{\frac{1}{2}-\alpha} \mathbf{1}_{ \ell} ds\big) \cr
&&+C \sup _{t \in[0, \Delta]}\big(\int_0^t(t-s)^{\frac{1}{2}-\alpha} \mathbf{1}_{ \ell}  ds\big)\cr
&\leq & C \Delta.
\end{eqnarray}

Combining  (\ref{3-55})--(\ref{3-59}), we have
\begin{eqnarray}\label{3-60}
J_1
&\leq & C \Delta+C \frac{\delta^2}{\Delta^2} \max _{0 \leq k \leq\left\lfloor\frac{T}{\Delta}\right\rfloor-1} \int_0^{\frac{\Delta}{\delta}} \int_\zeta^{\frac{\Delta}{\delta}} \mathcal{J}_k(s, \zeta) d s d \zeta\cr
& \leq& C \frac{\delta^2}{\Delta^2} \max _{0 \leq k \leq\left\lfloor\frac{T}{\Delta}\right\rfloor-1} \int_0^{\frac{\Delta}{\delta}} \int_\zeta^{\frac{\Delta}{\delta}} e^{-\frac{\beta_1}{2}(s-\zeta)} d s d \zeta+C \Delta \cr
& \leq &C \frac{\delta^2}{\Delta^2}\left(\frac{2}{\beta_1} \frac{\Delta}{\delta}-\frac{4}{\beta_1^2}+e^{\frac{-\beta_1}{2} \frac{\Delta}{\delta}}\right)+C\Delta \cr
& \leq& C\left(\delta \Delta^{-1}+\Delta\right),
\end{eqnarray}
where the final inequality comes from that choosing $\Delta=\Delta(\delta)$ such that $\frac{\Delta}{\delta}$ is sufficiently large.

Subsequently, it proceeds to estimate $J_2$,
\begin{eqnarray}\label{3-61}
J_2
&\leq &  C \mathbb{E}\bigg[\sup _{t \in[0, T]} e^{-\lambda t}\big\|\int_0^\cdot\big(\bar{f}_1( \tilde {x}_{s(\Delta)}^{(\varepsilon,\delta)})-\bar{f}_1( \tilde {x}_s^{(\varepsilon,\delta)})\big) d s\big\|_{\alpha,[0,t]}^2 \bigg] \cr
&&+C \mathbb{E}\bigg[\sup _{t \in[0, T]} e^{-\lambda t}\big\|\int_0^\cdot\big(\bar{f}_1(\tilde {x}_s^{(\varepsilon,\delta)})-\bar{f}_1(\hat {x}_s^{(\varepsilon,\delta)})\big) d s\big\|_{\alpha,[0,t]}^2 \bigg] \cr
&&+C \mathbb{E}\bigg[\sup _{t \in[0, T]} e^{-\lambda t}\big\|\int_0^\cdot\big(\bar{f}_1( \hat {x}_s^{(\varepsilon,\delta)})-\bar{f}_1( \tilde {x}_s^{(\varepsilon)})\big) d s\big\|_{\alpha,[0,t]}^2 \bigg]\cr
&=:& J_2^1+J_2^2+J_2^3.
\end{eqnarray}
Taking same manner to (\ref{3-41}), with   Assumption (\textbf{A2}), \lemref{lem1} and (\ref{3-51}), we have
\begin{eqnarray}\label{3-62}
\sum_{i=1}^{2} J_2^i 
&\leq & C \int_0^T\mathbb{E}\left[\big|\bar{f}_1( \tilde x_s^{(\varepsilon,\delta)})-\bar{f}_1( \tilde x_{s(\Delta)}^{(\varepsilon,\delta)})\big|^2 \right]d s +C \int_0^T\mathbb{E}\left[\big|\bar{f}_1( \tilde x_{s(\Delta)}^{(\varepsilon,\delta)})-\bar{f}_1( \hat x_{s}^{(\varepsilon,\delta)})\big|^2\right]d s \cr
&\leq & C \int_0^T\mathbb{E}\left[\big|\tilde x_s^{(\varepsilon,\delta)}-\tilde x_{s(\Delta)}^{(\varepsilon,\delta)}\big|^2+\big|\hat x_s^{(\varepsilon,\delta)}-\tilde x_{s}^{(\varepsilon,\delta)}\big|^2\right]d s \cr
&\leq &C \Delta+C^{\prime}R^{-1}
\end{eqnarray}
for certain constants $C=C(R,N)>0$ independent of $\varepsilon,\delta,\Delta$, and $C^{\prime}$  independent of $\varepsilon,\delta,\Delta,R$.

By the Lipschitz continuity of $\bar f_1$, we also have
\begin{eqnarray}\label{3-63}
J_2^3 
&\leq  & C \mathbb{E}\left[\sup _{t \in[0, T]} e^{-\lambda t} \int_0^t(t-s)^{-2 \alpha} \big| \bar{f}_1( \hat{x}_s^{(\varepsilon,\delta)})-\bar{f}_1( \tilde{x}_s^{(\varepsilon)})\big|^2  d s\right] \cr
& \leq& C\mathbb{E}\left[\sup _{t \in[0, T]} \int_0^t e^{-\lambda(t-s)}(t-s)^{-2 \alpha} e^{-\lambda s}\big|\hat{x}_s^{(\varepsilon,\delta)}-\tilde{x}_s^{(\varepsilon)}\big|^2  d s\right] \cr
& \leq& C\mathbb{E}\left[\sup _{t \in[0, T]} e^{-\lambda t}\big\|\hat{x}^{(\varepsilon,\delta)}-\tilde{x}^{(\varepsilon)}\big\|_{\alpha,[0,t]}^2 \right] \sup _{t \in[0, T]} \int_0^t e^{-\lambda(t-r)}(t-r)^{-2 \alpha} d r \cr
& \leq& C \lambda^{2 \alpha-1} \mathbb{E}\big[\sup_{t \in[0, T]}e^{-\lambda t} \big\|\hat{x}^{(\varepsilon,\delta)}-\tilde{x}^{(\varepsilon)}\big\|_{\alpha,[0,t]}^2 \big].
\end{eqnarray}
For the term $J_3$, take same manner in (\ref{3-45}), we have
\begin{eqnarray*}\label{3-64}
J_3 
&\leq & C \mathbb{E}\left[\sup _{t \in[0, T]}\left|\int_0^t e^{-\lambda t}\left[(t-r)^{-2 \alpha}+r^{-\alpha}\right]\left\|\sigma_1(  \hat {x}^{(\varepsilon,\delta)})-\sigma_1(  \tilde {x}^{(\varepsilon)})\right\|_{\alpha,[0,s]} d r\right|^2\right] \cr
&\leq & C\mathbb{E}\bigg[\sup _{t \in[0, T]} \mid \int_0^t e^{-\lambda t}\big[(t-r)^{-2 \alpha}+r^{-\alpha}\big]\bigg.\cr
&&\bigg.\times\left.\left(1+\Delta(  \hat {x}_r^{(\varepsilon,\delta)})+\Delta( \tilde {x}_r^{(\varepsilon)})\right)\| \hat {x}^{(\varepsilon,\delta)}- \tilde {x}^{(\varepsilon)}\|_{\alpha,[0,r]}  d r\right|^2\bigg],
\end{eqnarray*}
where  $\Delta(\hat{x}_r^{(\varepsilon,\delta)})=\int_0^r \frac{|\hat{x}_r^{(\varepsilon,\delta)}-\hat{x}_q^{(\varepsilon,\delta)}|}{(r-q)^{1+\alpha}} d q$ and $\Delta(\tilde{x}_r^{(\varepsilon)})=\int_0^r \frac{|\tilde{x}_r^{(\varepsilon)}-\tilde{x}_q^{(\varepsilon)}|}{(r-q)^{1+\alpha}} d q$. These above two parts can be dominated by $\|\hat x^{(\varepsilon,\delta)}\|_{\alpha, \infty}$ and $\|\tilde x^{\varepsilon}\|_{\alpha, \infty}$, which are in turn dominated by $C>0$ independent of $\varepsilon,\delta,\Delta$.

Therefore, 
\begin{eqnarray}\label{3-66}
J_3 
& \leq& C \lambda^{2 \alpha-1} \mathbb{E}\big[\sup_{t \in[0, T]} \big\|\hat{x}_s^{(\varepsilon,\delta)}-\tilde{x}_s^{(\varepsilon)}\big\|_\alpha^2 \big].
\end{eqnarray}
Combine (\ref{3-60})--(\ref{3-66}), we obtain
\begin{eqnarray*}\label{3-67}
\mathbb{E}\big[\sup _{t \in[0, T]}\|\hat x^{(\varepsilon,\delta)}-\tilde{x}^{\varepsilon}\|_{\alpha,[0,t]}^2\big] 
& \leq& C \lambda^{2 \alpha-1} \mathbb{E}\big[\sup_{t \in[0, T]} \big\|\hat{x}^{(\varepsilon,\delta)}-\tilde{x}^{(\varepsilon)}\big\|_{\alpha,[0,t]}^2 \big]\cr
&&+C\left(\delta \Delta^{-1}+\Delta\right)+C^{\prime}R^{-1}.
\end{eqnarray*}
Taking $\lambda$ large enough such that $C  \lambda^{2 \alpha-1}<1$, we have
\begin{eqnarray*}\label{3-80}
	\mathbb{E}\big[\sup _{t \in[0, T]}\|\hat x^{(\varepsilon,\delta)}-\tilde{x}^{\varepsilon}\|_{\alpha,[0,t]}^2\big] 
	\le C\left(\delta \Delta^{-1}+\Delta\right)+C^{\prime}R^{-1}
\end{eqnarray*}
for certain constants $C=C(R,N)>0$ independent of $\varepsilon,\delta,\Delta$, and $C^{\prime}$  independent of $\varepsilon,\delta,\Delta,R$.

By tha above estimation, and  continuous inclusion $C^{\alpha+\kappa} \subset W_0^{\alpha, \infty} \subset C^{\alpha-\kappa}$ holds for any suffficiently small $\kappa>0$,  we have
\begin{eqnarray*}\label{3-81}
	\lim_{\varepsilon \to 0}\mathbb{E}\big[\|\tilde x^{(\varepsilon,\delta)}-\tilde{x}^{\varepsilon}\|_{\alpha\textrm{-hld}}^2\big]= 0.
\end{eqnarray*}
Precisily, we argued in the following way, we set $\Delta=\varepsilon$, and take $\limsup_{\varepsilon \to 0}$  for every fixed large $R$, then we let $R \to\infty$. Note that $C^{\prime}$ is independent of $R$.

 According to the Step 1, if  $(u^{(\varepsilon, \delta)}, v^{(\varepsilon, \delta)})\in \mathcal{A}^{N}_b$ such that $(u^{(\varepsilon, \delta)}, v^{(\varepsilon, \delta)})$ weakly converges  to $(u, v)$ as $\varepsilon \to 0$, then,  $\{\tilde x^{(\varepsilon)}\}=\{\mathcal{G}^0 (u^{(\varepsilon, \delta)}, v^{(\varepsilon, \delta)})\}$ weakly converges to $\tilde x=\mathcal{G}^0 (u, v)$ in $ C^{\alpha}([0,T],\mathbb{R}^m)$ as $\varepsilon \to 0$. 
Then, by the Portemanteau's theorem \cite[Theorem 13.16]{2020Klenke}, for any bounded Lipschitz functions $f:C^{\alpha}([0, T]; \mathbb{R}^m) \to\mathbb{R}$, we see that as $\varepsilon \to 0$
\begin{eqnarray*}\label{3-82}
|\mathbb{E}[f(\tilde x^{(\varepsilon,\delta)})]-\mathbb{E}[f(\tilde x)]|&\le& |\mathbb{E}[f(\tilde x^{(\varepsilon,\delta)})]-\mathbb{E}[f(\tilde x^{(\varepsilon)})]|+|\mathbb{E}[f(\tilde x^{(\varepsilon)})]-\mathbb{E}[f(\tilde x)]|\cr
&\le& \|f\|_\textrm{Lip}\mathbb{E}[\|\tilde x^{(\varepsilon,\delta)}-\tilde x^{(\varepsilon)}\|_{\alpha\textrm{-hld}}^2]^{\frac{1}{2}}+|\mathbb{E}[f(\tilde x^{(\varepsilon)})]-\mathbb{E}[f(\tilde x)]|\to 0.
\end{eqnarray*} 
 Here, $\|f\|_\textrm{Lip}$ is the Lipschitz constant of $f$. Thus, we have obatined (\ref{step3}).

\textbf{Step 3}. In this step, we prove Laplace upper and lower bound as well as goodness of $I$. By a standard argument, it implies the large deviation in our main theorem.  We write $\mathcal{E}:=  C^{\alpha}([0, T]; \mathbb{R}^m)$ for simplicity. 

First, we will show that the $\{x^{(\varepsilon,\delta)}\}$ satisfies the Laplace principle upper bound on $\mathcal{E}$ with  $I:\mathcal{E} \to [0,\infty]$, that is for  any continuous and bounded function $F : \mathcal{E} \to
\mathbb{R}$, 
\begin{eqnarray}\label{5-5}
\limsup _{\varepsilon \rightarrow 0} -\varepsilon \log \mathbb{E}\left[e^{-F(x^{(\varepsilon,\delta)}) / \varepsilon}\right] \leq\inf _{\phi \in \mathcal{E}}\left[F(\phi)+I(\phi)\right]
\end{eqnarray}
holds.

Without loss we can assume that $\inf _{\phi \in \mathcal{E}}\{F(\phi)+I(\phi)\}<\infty$. Let $\kappa>0$ be arbitrary. So, there exists  $\phi_0\in \mathcal{E}$ such that
$$
F\left(\phi_0\right)+I\left(\phi_0\right) \leq \inf _{\phi \in \mathcal{E}}\{F(\phi)+I(\phi)\}+\frac{\kappa}{2}<\infty .
$$

By the definition of $I:\mathcal{E} \to [0,\infty]$, there exists $(\tilde{u},0)\in \mathcal{H}$ such that
	\begin{eqnarray}\label{5-4}\frac{1}{2}\|\tilde{u}\|_{\mathcal{H}^{H,d_1}}^2 \leq I\left(\phi_0\right)+\frac{\kappa}{2}, \text { and } \phi_0=\mathcal{G}^0(\tilde{u},0).
	\end{eqnarray}
	According to  \propref{prop1}, we can see that
\begin{eqnarray}\label{5-1}
	-\varepsilon \log \mathbb{E}\Big(\exp \Big\{-\frac{F(x^{(\varepsilon,\delta)})}{\varepsilon}\Big\}\Big) & =&-\varepsilon \log \mathbb{E}\Big(\exp \Big\{-\frac{F \circ \mathcal{G}^{(\varepsilon,\delta)}(\sqrt{\varepsilon} B^H,\sqrt{\varepsilon} W)}{\varepsilon}\Big\}\Big) \cr
	& =&\inf _{(u,v) \in \mathcal{A}_b} \mathbb{E}\Big[F \circ \mathcal{G}^{(\varepsilon,\delta)}(\sqrt{\varepsilon} B^H+u,\sqrt{\varepsilon} W+v)+\frac{1}{2}\|(u,v)\|_{\mathcal{H}}^2\Big].
	\end{eqnarray}
 By taking $\limsup_{\varepsilon \rightarrow 0}$ on the both sides of (\ref{5-1}) and applying (\ref{5-4}), we have
\begin{eqnarray*}\label{5-2}
\lefteqn{
\limsup _{\varepsilon \rightarrow 0}-\varepsilon \log \mathbb{E}\Big(\exp \Big\{-\frac{F(x^{(\varepsilon,\delta)})}{\varepsilon}\Big\}\Big)  
}\\
& =&\limsup _{\varepsilon \rightarrow 0} \inf _{(u,v) \in \mathcal{A}_b} \mathbb{E}\Big[F \circ \mathcal{G}^{(\varepsilon,\delta)}(\sqrt{\varepsilon} B^H+u,\sqrt{\varepsilon} W+v)+\frac{1}{2}\|(u,v)\|_{\mathcal{H}}^2\Big] \cr
& \leq& \limsup _{\varepsilon \rightarrow 0} \mathbb{E}\Big[F \circ \mathcal{G}^{(\varepsilon,\delta)}(\sqrt{\varepsilon} B^H+\tilde u,\sqrt{\varepsilon} W+0)+\frac{1}{2}\|\tilde{u}\|_{\mathcal{H}^{H,d_1}}^2\Big] \cr
& \leq &\limsup _{\varepsilon \rightarrow 0} \mathbb{E}\big[F \circ \mathcal{G}^{(\varepsilon,\delta)}(\sqrt{\varepsilon} B^H+\tilde u,\sqrt{\varepsilon} W+0)\big]+I\left(\phi_0\right)+\frac{\kappa}{2}\cr
& = & F\circ \mathcal{G}^{0}(\tilde u,0)+I\left(\phi_0\right)+\frac{\kappa}{2}\cr
&=&F\left(\phi_0\right)+I\left(\phi_0\right)+\frac{\kappa}{2}\cr
 &\le&\inf _{\phi \in \mathcal{E}}\{F(\phi)+I(\phi)\}+\kappa,
\end{eqnarray*}
where the forth line comes from  Step 2.
Since $\kappa>0$ is arbitrary, we have shown the  (\ref{5-5}).

Next, we will show that the $\{x^{(\varepsilon,\delta)}\}$ satisfies the Laplace principle lower bound on $\mathcal{E}$ with  $I:\mathcal{E} \to [0,\infty]$, that is for any continuous and  bounded function $F : \mathcal{E} \to
\mathbb{R}$, 
\begin{eqnarray}\label{5-6}
\liminf _{\varepsilon \rightarrow 0} -\varepsilon \log \mathbb{E}\left[e^{-F(x^{(\varepsilon,\delta)}) / \varepsilon}\right] \geq\inf _{\phi \in \mathcal{E}}\left[F(\phi)+I(\phi)\right]
\end{eqnarray}
holds. 

Before proving  (\ref{5-6}), some prior estimates will be given. Take any $\kappa\in(0,1)$ and fix it for a while. For each $0<\delta<\varepsilon<1$, there exists $(u^{(\varepsilon,\delta)},v^{(\varepsilon,\delta)})\in \mathcal{A}_b$ such that 
\begin{eqnarray*}\label{5-7}
&\inf _{(u,v) \in \mathcal{A}_b} \mathbb{E}\Big[F \circ \mathcal{G}^{(\varepsilon,\delta)}(\sqrt{\varepsilon} B^H+ u,\sqrt{\varepsilon} W+ v)+\frac{1}{2}\|( u, v)\|_{\mathcal{H}}^2\Big] \cr
&\geq \mathbb{E}\Big[F \circ \mathcal{G}^{(\varepsilon,\delta)}(\sqrt{\varepsilon} B^H+ u^{(\varepsilon,\delta)},\sqrt{\varepsilon} W+ v^{(\varepsilon,\delta)})+\frac{1}{2}\|( u^{(\varepsilon,\delta)}, v^{(\varepsilon,\delta)})\|_{\mathcal{H}}^2\Big]-\kappa=:\mathbb{A}.
\end{eqnarray*}
It is easy to see that for all $0<\delta<\varepsilon<1$, $\mathbb{E}\left(\frac{1}{2}\|( u^{(\varepsilon,\delta)}, v^{(\varepsilon,\delta)})\|_{\mathcal{H}}^2\right) \leq 2 \|F\|_{\infty}+1$. Then define the stopping times $\tau_N^{\varepsilon}=\inf \{t \in[0,T]: \frac{1}{2} \int_0^t[|\dot{u}_s^{(\varepsilon,\delta)}|^2+|\{v^{(\varepsilon,\delta)}\}{^\prime_s}|^2]  d s \geq N\} \wedge T$
where $u^{(\varepsilon,\delta)}=\mathcal{\mathcal{K_H}} \dot{u}^{(\varepsilon,\delta)}$ and $\{v^{(\varepsilon,\delta)}\}{^\prime}$ is the time derivative of ${v}^{(\varepsilon,\delta)}$.  Define $({u}^{(\varepsilon,\delta,N)},{{v}^{(\varepsilon,\delta,N)}})\in \mathcal{A}_b^N$ by 
 \begin{eqnarray*}\label{5-21}
(\dot{u}^{(\varepsilon,\delta,N)},\{v^{(\varepsilon,\delta,N)}\}{^\prime})=(\dot{u}^{(\varepsilon,\delta)}\mathbf{1}_{[0,\tau_N^\varepsilon]},\{v^{(\varepsilon,\delta)}\}{^\prime}\mathbf{1}_{[0,\tau_N^\varepsilon]}).
 \end{eqnarray*} 
 Then, we have that
\begin{eqnarray}\label{5-8}
\mathbb{P}\big(( u^{(\varepsilon,\delta,N)}, v^{(\varepsilon,\delta,N)}) \neq ( u^{(\varepsilon,\delta)}, v^{(\varepsilon,\delta)})\big) &\leq& \mathbb{P}\Big(\frac{1}{2} \int_0^t[|\dot{u}_s^{(\varepsilon,\delta)}|^2+|\{v^{(\varepsilon,\delta)}\}{^\prime_s}|^2]  d s \geq N\Big)\cr
&=&\mathbb{P}\Big(\frac{1}{2}\big\|({u}^{(\varepsilon,\delta)},{v}^{(\varepsilon,\delta)})\big\|_{\mathcal{H}}^2 \geq N\Big) \leq \frac{2 \|F\|_{\infty}+1}{N}.
\end{eqnarray}
Note that $\mathbb{A}$ can be rewritten in the following way:
\begin{eqnarray*}\label{5-9}
\mathbb{A}
&\ge& \mathbb{E}\Big[F \circ \mathcal{G}^{(\varepsilon,\delta)}(\sqrt{\varepsilon} B^H+ u^{(\varepsilon,\delta,N)},\sqrt{\varepsilon} W+ v^{(\varepsilon,\delta,N)})+\frac{1}{2}\|( u^{(\varepsilon,\delta,N)}, v^{(\varepsilon,\delta,N)})\|_{\mathcal{H}}^2\Big]-\kappa\cr
&&+ \mathbb{E}\Big[F \circ \mathcal{G}^{(\varepsilon,\delta)}(\sqrt{\varepsilon} B^H+ u^{(\varepsilon,\delta)},\sqrt{\varepsilon} W+ v^{(\varepsilon,\delta)})-F \circ \mathcal{G}^{(\varepsilon,\delta)}(\sqrt{\varepsilon} B^H+ u^{(\varepsilon,\delta,N)},\sqrt{\varepsilon} W+ v^{(\varepsilon,\delta,N)})\Big]\cr
&=:&\mathbb{A}_1+\mathbb{A}_2.
\end{eqnarray*}
By (\ref{5-8}), and choosing $N$ large enough such that $\frac{2 \|F\|_{\infty}(2 \|F\|_{\infty}+\kappa)}{N} \leq \kappa$, we have 
\begin{eqnarray*}\label{5-10}
\mathbb{A}_2
\geq-\kappa.
\end{eqnarray*}
Notice that the integrand of $\mathbb{A}_2$ vanishes on $\{( u^{(\varepsilon,\delta,N)}, v^{(\varepsilon,\delta,N)}) = ( u^{(\varepsilon,\delta)}, v^{(\varepsilon,\delta)})\}$
and that the choice of $N$ is independent of $\varepsilon,\delta$.
Then, we have \begin{eqnarray}\label{5-11}
\mathbb{A}
&\geq& \mathbb{E}\Big[F \circ \mathcal{G}^{(\varepsilon,\delta)}(\sqrt{\varepsilon} B^H+ u^{(\varepsilon,\delta,N)},\sqrt{\varepsilon} W+ v^{(\varepsilon,\delta,N)})+\frac{1}{2}\|( u^{(\varepsilon,\delta,N)}, v^{(\varepsilon,\delta,N)})\|_{\mathcal{H}}^2\Big]-2\kappa.
\end{eqnarray}
Then, we will show (\ref{5-6}). According to  \propref{prop1} and  (\ref{5-11}), we can see that
\begin{eqnarray}\label{5-12}
-\varepsilon \log \mathbb{E}\Big[\exp \Big(-\frac{F(x^{(\varepsilon,\delta)})}{\varepsilon}\Big)\Big] & =&
-\varepsilon \log \mathbb{E}\Big[\exp \Big(-\frac{F \circ \mathcal{G}^{(\varepsilon,\delta)}(\sqrt{\varepsilon} B^H,\sqrt{\varepsilon} W)}{\varepsilon}\Big)\Big] \cr
& =&\inf _{(u,v) \in \mathcal{A}_b} \mathbb{E}\Big[F \circ \mathcal{G}^{(\varepsilon,\delta)}(\sqrt{\varepsilon} B^H+ u,\sqrt{\varepsilon} W+ v)+\frac{1}{2}\|( u, v)\|_{\mathcal{H}}^2\Big]\cr
& \geq&\mathbb{E}\Big[F \circ \mathcal{G}^{(\varepsilon,\delta)}(\sqrt{\varepsilon} B^H+ u^{(\varepsilon,\delta,N)},\sqrt{\varepsilon} W+ v^{(\varepsilon,\delta,N)})\Big.\cr
&&\qquad\qquad\qquad+\Big.\frac{1}{2}\|( u^{(\varepsilon,\delta,N)}, v^{(\varepsilon,\delta,N)})\|_{\mathcal{H}}^2\Big]-2\kappa.
\end{eqnarray}

There exists $\{\varepsilon_k,\delta_k\}_{k=1}^{\infty}$ such that $0<\delta_k<\varepsilon_k$, $\lim\limits_{k\to \infty}\varepsilon_k= \lim\limits_{k\to \infty} \frac{\delta_k}{\varepsilon_k}=0$ and
\begin{eqnarray*}\label{5-13}
\liminf_{\varepsilon \rightarrow 0}-\varepsilon \log \mathbb{E}\Big[\exp \Big(-\frac{F(x^{(\varepsilon,\delta)})}{\varepsilon}\Big)\Big]
&=& \lim_{k \rightarrow \infty}-\varepsilon_k \log \mathbb{E}\Big[\exp \Big(-\frac{F(x^{(\varepsilon_k,\delta_k)})}{\varepsilon_k}\Big)\Big].
\end{eqnarray*}
The laws of the  family $\{{u}^{(\varepsilon_k,\delta_k,N)},{{v}^{(\varepsilon_k,\delta_k,N)}}\}_{k=1}^{\infty}\subset \mathcal{A}_b^N$ are tight since $S_N$ is compact. So there exists a subsequence (denoted by the same symbol) such that  $\{{u}^{(\varepsilon_k,\delta_k,N)},{{v}^{(\varepsilon_k,\delta_k,N)}}\}_{k=1}^{\infty}$ weakly converges to some $({u}^{\infty},{{v}^{\infty}})$ as $k \to \infty$. Then 
\begin{eqnarray*}\label{5-14}
\liminf_{\varepsilon \rightarrow 0}-\varepsilon \log \mathbb{E}\Big[\exp \Big(-\frac{F(x^{(\varepsilon,\delta)})}{\varepsilon}\Big)\Big] & \geq&\liminf_{k \rightarrow \infty}\mathbb{E}\Big[F \circ \mathcal{G}^{(\varepsilon_k,\delta_k)}(\sqrt{\varepsilon_k} B^H+ u^{(\varepsilon_k,\delta_k,N)},\sqrt{\varepsilon_k} W+ v^{(\varepsilon_k,\delta_k,N)})\Big.\cr
&&\qquad\qquad\qquad+\Big.\frac{1}{2}\|( u^{(\varepsilon_k,\delta_k,N)}, v^{(\varepsilon_k,\delta_k,N)})\|_{\mathcal{H}}^2\Big]-2\kappa\cr
&\geq& \mathbb{E}\big[F \circ \mathcal{G}^0(u^\infty,v^\infty)+\frac{1}{2}\|({u^\infty},{v^\infty})\|_{\mathcal{H}}^2\big]-2\kappa\cr
&\geq &\quad \inf \big\{F(\phi)+\frac{1}{2}\|({u},{v})\|_{\mathcal{H}}^2:(u,v)\in\mathcal{H},{ \phi=\mathcal{G}^0(u,v)}\big\}-2\kappa\cr
&\geq& \inf _{\phi \in \mathcal{E}}\{F(\phi)+I(\phi)\}-2\kappa
\end{eqnarray*}
for every $\kappa \in (0,1).$
In the second inequality, we used the following facts; 
{\rm (i)} For real sequence $\{a_k\}$, $\{b_k\}$, we have
$\liminf_{k\to \infty}(a_k+b_k)
=\lim\limits_{k\to \infty}a_k
+\liminf_{k\to \infty}b_k$ if $\lim\limits_{k\to \infty} a_k$ exists. {\rm (ii)} In Step 2, we showed $\mathcal{G}^{(\varepsilon_k,\delta_k)}(\sqrt{\varepsilon_k} B^H+ u^{(\varepsilon_k,\delta_k,N)},\sqrt{\varepsilon_k} W+ v^{(\varepsilon_k,\delta_k,N)})$ weakly converges to $\mathcal{G}^0(u^\infty,v^\infty)$ as $k\to \infty$. {\rm (iii)} Fatou's lemma and lower semi-continuity of $\frac{1}{2}\|(\cdot,\cdot)\|^2_{\mathcal{H}}$. (In {\rm (iii)} we also used Skorohod's theorem \cite[Page 9, Theorem 2.7]{1989Ikeda} to turn a weakly convergent sequence of $\mathcal{H}$-valued random variables into an almost convergent one on another probability space without changing their laws.)

Finally, we will show that  $I:\mathcal{E} \to [0,\infty]$ is a good rate function, that is, the sublevel set $\{\phi: I(\phi) \leq M\}$ is compact for any $0\le M<\infty$. 

We first define the set $\Gamma_N=\left\{\mathcal{G}^0(u,v)\in \mathcal{E}: (u,v) \in S_N\right\}$.  According to  Step 1, we can see the 
set $\Gamma_N=\left\{\mathcal{G}^0(u,v)\in \mathcal{E}: (u,v) \in S_N\right\}$ is compact.
It is sufficient to prove that 
\begin{eqnarray}\label{5-20}
\{\phi: I(\phi) \leq M\}=\bigcap_{n=1}^{\infty} \Gamma_{M+\frac{1}{n}} . 
\end{eqnarray}
Firstly, we will show that $\{\phi: I(\phi) \leq M\}\subseteq\bigcap_{n=1}^{\infty} \Gamma_{M+\frac{1}{n}} $. Let $\phi\in \mathcal{E}$ such that $I(\phi)\le M$. By the definition of $I(\phi) $, for each $n$, there exists $(u^n,v^n)\in \mathcal{H}$ such that $\frac{1}{2}\|u^n\|_{\mathcal{H}^{H,d_1}}^2\le M+\frac{1}{n}$. So we have  $\phi\in \Gamma_{M+\frac{1}{n}}$. Next, we will show that $\{\phi: I(\phi) \leq M\}\supseteq\bigcap_{n=1}^{\infty} \Gamma_{M+\frac{1}{n}} $. Assume $\phi\in\Gamma_{M+\frac{1}{n}}$ for all $n$. For each $n$, there exists $(u^n,v^n)\in S_{M+\frac{1}{n}}$ such that $\phi=\mathcal{G}^0(u^n,v^n)$. So for all $n$, we have $I(\phi) \leq \frac{1}{2}\left\|u^n\right\|_{\mathcal{H}^{H,d_1}}^2 \leq M+\frac{1}{n}$. Let $n \to \infty$, we get $I(\phi) \leq  M$. 
Thus, we proved (\ref{5-20}).

According to the equivalence between the Laplace principle and large deviation (\cite[Theorem 1.8]{BP_book} for example),  
we have that the slow variable $x^{(\varepsilon,\delta)}$ of system (\ref{1}) satisfies a large deviation principle with the rate function $I: C^{\alpha}\left([0,T], \mathbb{R}^m\right)\rightarrow [0, \infty]$. 
This proof of our main theorem (\thmref{thm}) was completed.\qed

	\section*{Acknowledgments}
This work was partly supported by the Key International (Regional) Cooperative Research Projects of the NSF of China under Grant No. 12120101002, the NSF of China under Grant No. 12072264. 
{and JSPS KAKENHI (Grant No. 20H01807).}

	\section*{Declarations}
	The authors declare that they have no known competing financial interests or personal relationships that could have appeared
	to influence the work reported in this paper.


\end{document}